\numberwithin{equation}{section}
\newcommand{\p}{\mathbb{P}} 
\newcommand{\E}{\mathbb{E}} 
\newcommand{\eps}{\varepsilon} 
\def\rootv{o}
\def\rooto{o}
\def\root0{o}
\def\rootrho{\rho}
\newtheorem{theorem}{Theorem}[section]
\newtheorem{lemma}[theorem]{Lemma}
\newtheorem{proposition}[theorem]{Proposition}
\newtheorem{corollary}[theorem]{Corollary}
\newtheorem{definition}[theorem]{Definition}
\newtheorem{remark}[theorem]{Remark}
\begin{document}

\title{Optimal control for diffusions on graphs}
\author{
	Laura Florescu
	\thanks{New York University; \texttt{florescu@cims.nyu.edu}.}
	\and
	Yuval Peres
	\thanks{Microsoft Research; \texttt{peres@microsoft.com}.}
	\and
	Mikl\'os Z.\ R\'acz
	\thanks{Microsoft Research; \texttt{miracz@microsoft.com}.} 
}
\date{\today}

\maketitle


\begin{abstract}
Starting from a unit mass on a vertex of a graph, we investigate the minimum number of ``\emph{controlled diffusion}'' steps needed to transport a constant mass $p$ outside of the ball of radius~$n$. 
In a step of a controlled diffusion process we may select any vertex with positive mass and topple its mass equally to its neighbors. 
Our initial motivation comes from the maximum overhang question in one dimension, but the more general case arises from optimal mass transport problems. 

On $\mathbb{Z}^{d}$ we show that $\Theta( n^{d+2} )$ steps are necessary and sufficient to transport the mass. 
We also give sharp bounds on the comb graph and $d$-ary trees. 
Furthermore, we consider graphs where simple random walk has positive speed and entropy and which satisfy Shannon's theorem, 
and show that the minimum number of controlled diffusion steps is  
$\exp{( n \cdot h / \ell ( 1 + o(1) ) )}$, 
where $h$ is the Avez asymptotic entropy and $\ell$ is the speed of random walk. 
As examples, we give precise results on Galton-Watson trees and the product of trees $\mathbb{T}_d \times \mathbb{T}_k$. 
\end{abstract}


\begin{figure}[h!]
 \centering 
 \includegraphics[width=0.43\textwidth, clip=true, trim=1.5in 1.5in 1.5in 1.5in]{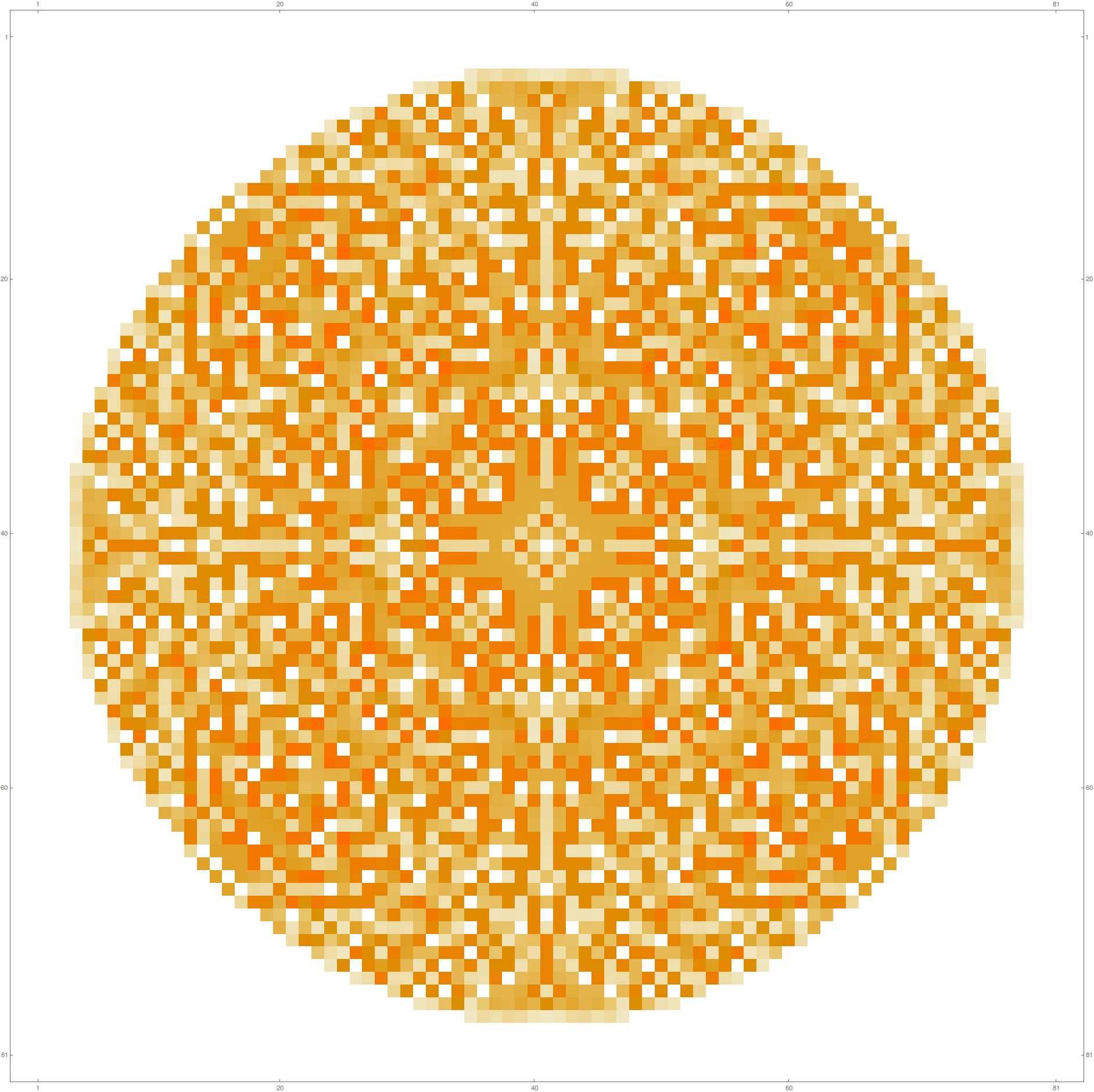}
 \caption{Starting with a unit mass at the origin of $\mathbb{Z}^{2}$, consider the process that at each step takes the vertex with largest mass and topples its mass equally to its neighbors. In case of a tie, topple all vertices with largest mass simultaneously. The figure shows the mass distribution after $10^{5}$ such greedy steps, with darker squares representing larger masses.}
 \label{fig:Z2}
\end{figure}

\section{Introduction} \label{sec:intro} 

Suppose that we have a unit mass at the origin of the $d$-dimensional lattice $\mathbb{Z}^{d}$ and we wish to move half of the mass to distance $n$. 
If the only moves we are allowed to make take a vertex and split the mass at the vertex equally among its neighbors, how many moves do we need to accomplish this goal? 
The one-dimensional case was solved by Paterson, Peres, Thorup, Winkler, and Zwick~\cite{overhang}, who studied this question due to its connections with the maximum overhang problem~\cite{paterson2006overhang,overhang}. 
The main result of this paper solves this problem in $\mathbb{Z}^{d}$ for general $d$; the proof builds on the one-dimensional case, but requires new ideas. 
We also explore this question on several other graphs, such as the comb, regular trees, Galton-Watson trees, and more.

The problem also has a probabilistic interpretation. 
Suppose there is a particle at the origin of $\mathbb{Z}^{d}$, 
as well as a controller who cannot see the particle (but who knows that the particle is initially at the origin). 
The goal of the controller is to move the particle to distance $n$ from the origin 
and it can give commands of the type ``jump if you are at vertex $v$''. 
The particle does not move unless the controller's command correctly identifies the particle's location, in which case the particle jumps to a neighboring vertex chosen uniformly at random. 
How many commands does the controller have to make in order for the particle to be at distance $n$ with probability at least $1/2$?

\subsection{Setting and main result} \label{sec:setting} 


\begin{definition}[Toppling moves]\label{def:toppling}
Given a graph $G = (V,E)$ and a mass distribution $\mu$ on the vertex set $V$, 
a \emph{toppling move} selects a vertex $v \in V$ with positive mass $\mu \left( v \right) > 0$ 
and topples (part of) the mass equally to its neighbors. 
We denote by $T_{v}^{m}$ the toppling move that topples mass $m$ at vertex $v$, 
resulting in the mass distribution $T_{v}^{m} \mu$. 

Given a subset of the vertices $A \subset V$, mass $p > 0$, and an initial mass distribution $\mu_{0}$, 
we define 
$N_{p} \left( G, A, \mu_{0} \right)$ 
to be the minimum number of toppling moves needed to move mass $p$ outside of the set $A$, 
i.e., the minimum number of toppling moves needed to obtain a mass distribution $\mu$ such that $\sum_{v \notin A} \mu \left( v \right) \geq p$. 
\end{definition}

Our interest is in the case when the initial mass distribution is a unit mass $\delta_{\rootv}$ at a given vertex $\rootv$ 
and $A$ is the (open) ball of radius $n$ around $\rootv$, 
i.e., $A = B_{n} := \left\{ u \in V : d_{G}(u,\rootv) < n \right\}$, 
where $d_{G}$ denotes graph distance in $G$. 
In other words, we wish to transport a mass of at least $p$ to distance at least $n$ away from $\rootv$. 
Our results hold for $p$ constant. 

Our main result concerns the lattice $\mathbb{Z}^{d}$: 
\begin{theorem}\label{thm:zd}
Start with initial unit mass $\delta_{\root0}$ at the origin $\root0$ of $\mathbb{Z}^{d}$, $d \geq 2$, and let $p \in (0,1)$ be constant.  
The minimum number of toppling moves needed to transport mass $p$ to distance at least $n$ from the origin is 
\[
N_{p} \left( \mathbb{Z}^{d}, B_{n}, \delta_{\root0} \right) = \Theta \left( n^{d+2} \right),  
\]
where the implied constants depend only on $d$ and $p$. 
\end{theorem}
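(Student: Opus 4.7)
The upper bound $N_p \leq C n^{d+2}$ is achieved by a synchronous diffusion strategy. At each of $T = c_p n^2$ rounds, topple every vertex whose current mass exceeds a threshold of order $t^{-d/2}$, transferring all its mass equally to its $2d$ neighbors. Up to negligible truncation error this reproduces the law of simple random walk at time $t$, so after $T$ rounds the central limit theorem guarantees at least $p$ mass outside $B_n$. The essential support at round $t$ (the Gaussian core of width $\sqrt{t}$) has size $O(t^{d/2})$, so the total move count is $\sum_{t=1}^T O(t^{d/2}) = O(T^{(d+2)/2}) = O(n^{d+2})$.

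For the lower bound, I would encode any strategy by the cumulative odometer $a : \mathbb{Z}^d \to [0, \infty)$, where $a(v)$ is the total mass ever toppled at $v$. The final mass distribution is
\begin{equation*}
\mu(v) = \delta_{\root0}(v) + \tfrac{1}{2d}(\Delta a)(v),
\end{equation*}
where $\Delta$ is the unnormalized graph Laplacian, and the constraint $\mu \geq 0$ forces $a$ to be subharmonic on $\mathbb{Z}^d \setminus \{\root0\}$. Discrete integration by parts with the test function $\phi(v) = \|v\|^2$ (for which $\Delta \phi \equiv 2d$) yields the clean identity
\begin{equation*}
S_2(\mu) := \sum_v \|v\|^2 \mu(v) = \sum_v a(v) =: A,
\end{equation*}
and Markov then gives $A \geq p n^2$ from the hypothesis that mass $p$ sits outside $B_n$.

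The heart of the lower bound is a discrete Nash-type inequality for the toppling dynamics: any feasible pair $(a_t, \mu_t)$ arising after $t$ moves satisfies $\|\mu_t\|_\infty \leq C_d \cdot A_t^{-d/2}$ (valid once $A_t \geq 1$, with appropriate handling of the mass at the origin). Granting this, since the mass released in the $(t{+}1)$st move satisfies $m_{t+1} \leq \|\mu_t\|_\infty$, we obtain the difference inequality $A_{t+1} - A_t \leq C_d \, A_t^{-d/2}$; summing (with antiderivative $A^{(d+2)/2}/(d+2)$) gives $A_M \leq C'_d M^{2/(d+2)}$. Combined with $A_M \geq p n^2$, this forces $M \geq c_{d,p}\, n^{d+2}$.

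The main obstacle will be establishing the Nash-type estimate for arbitrary feasible $(a, \mu)$, since the combinatorial flexibility of the dynamics allows $\mu$ to transiently retain $\Omega(1)$ mass at the origin. My plan is to decompose $\mu = \mu(\root0)\, \delta_{\root0} + \mu^{\neq 0}$ and bound $\|\mu^{\neq 0}\|_\infty$ via a discrete Moser iteration: combine the subharmonicity of $a$ with the classical Nash inequality $\|f\|_2^{2+4/d} \leq C \|f\|_1^{4/d}\, \mathcal{E}(f, f)$ on $\mathbb{Z}^d$ to obtain an $L^2$ decay estimate, then bootstrap through $L^p \to L^q$ bounds to reach $L^\infty$. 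Residual mass at the origin is absorbed separately by noting that toppling it out requires at least $a(\root0)$ moves there, which contributes to $M$ on its own. As a sanity check, the $d = 1$ specialization reproduces the $\Omega(n^3)$ bound of~\cite{overhang}, and the extra factor $n^{d-1}$ in higher dimensions arises precisely from the improved heat-kernel decay rate $t^{-d/2}$ in dimension $d$.
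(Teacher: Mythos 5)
The identity $S_2(\mu) = \sum_v a(v)$ via summation by parts is correct and is in the same spirit as the paper's second-moment calculation; your upper bound (simulating a truncated random walk on a ball of the right size) is also morally the same as the paper's random-walk upper bound (Theorem~\ref{thm:volumeexittime}), though you would need to track the truncation error carefully since it compounds over the $\Theta(n^2)$ rounds. The paper's actual proof of the $\mathbb{Z}^d$ upper bound is a shorter greedy argument: topple the site of maximal mass, observe $M_2[\mu_i]-M_2[\mu_{i-1}]=m_i>(1-p)/|B_n|$ at each step, and conclude from $M_2[\mu_t]\le n^2$.

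The lower bound is where you diverge from the paper, and there is a genuine gap. Your claimed Nash-type estimate $\|\mu_t\|_\infty \le C_d A_t^{-d/2}$ (or the variant for $\mu^{\neq 0}$) is false for feasible configurations. Take the strategy that topples mass $\eps$ at the origin once and then diffuses that $\eps$-piece until it exits $B_n$, leaving the other $1-\eps$ at the origin untouched. Then $\|\mu_t\|_\infty \ge 1-\eps$ throughout, while $A_t$ grows to order $\eps n^2$; so $A_t^{-d/2}\ll 1-\eps$ once $n$ is moderately large, and the inequality fails badly. You anticipate this (the ``lingering mass at origin'' issue), but the proposed decomposition $\mu = \mu(\rooto)\delta_\rooto + \mu^{\neq 0}$ plus Moser iteration is not a proof: subharmonicity of $a$ away from the origin does not give pointwise upper bounds on $\mu = \delta_\rooto + \tfrac{1}{2d}\Delta a$, and the remark that ``toppling the origin out requires $a(\rooto)$ moves'' conflates toppled \emph{mass} with the \emph{number} of topplings (one can topple the origin many times releasing arbitrarily small amounts, and these $m_i$ can be much less than $1$). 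As it stands, the difference inequality $A_{t+1}-A_t \le C_d A_t^{-d/2}$ is unjustified.

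The paper avoids any $L^\infty$ bound on $\mu_t$ by an entirely different mechanism. It first \emph{smooths} the initial condition by running the divisible sandpile (Levine--Peres), producing a near-uniform distribution on a Euclidean ball of radius $cn$; smoothing can only decrease $N_p$ (Lemma~\ref{lem:smoothing}). Then it introduces the energy $\mathcal{E}_a[\mu]=\sum_{x,y} a(x-y)\mu(x)\mu(y)$ with the potential kernel $a$, computes that each toppling changes the energy by $2m_i\mu_{i-1}(v_i)-m_i^2\ge m_i^2$ while the second moment changes by exactly $m_i$, and applies Cauchy--Schwarz to get
\[
t\bigl(\mathcal{E}_a[\mu_t]-\mathcal{E}_a[\mu_0]\bigr)\ge\bigl(M_2[\mu_t]-M_2[\mu_0]\bigr)^2.
\]
After smoothing, Green's-function asymptotics give $\mathcal{E}_a[\mu_t]-\mathcal{E}_a[\mu_0]=O(n^{2-d})$ while $M_2[\mu_t]-M_2[\mu_0]=\Omega(n^2)$, yielding $t=\Omega(n^{d+2})$. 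Without smoothing, the energy difference is only $O(1)$ (for $d\ge 3$), which gives $\Omega(n^4)$ --- precisely the bottleneck you would also face. I'd recommend adopting the paper's two steps: smoothing via the divisible sandpile to make $\mu_0$ near-uniform, and the elementary Cauchy--Schwarz inequality on energy vs.\ second moment, which sidesteps the need for any pointwise control of the running mass distribution.
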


As mentioned previously, the one-dimensional case was studied and solved in~\cite{overhang}, where the authors obtained the same result as in Theorem~\ref{thm:zd} for $d = 1$. We discuss the connection to the maximum overhang problem and related open problems in more detail at the end of the paper (see Section~\ref{sec:open}). 
Figure~\ref{fig:Z2} illustrates a greedy algorithm on $\mathbb{Z}^{2}$, which indeed transports mass $p$ to distance at least $n$ in $O \left( n^{4} \right)$ toppling moves. 


\subsection{Further results for other graphs} \label{sec:other_graphs} 

We start by giving a general upper bound on the number of toppling moves necessary to transport the mass from a vertex to outside a given set. 

\begin{theorem}\label{thm:volumeexittime}
Let $G = \left( V, E \right)$ be an infinite, connected, locally finite graph and 
let $\left\{ X_{t} \right\}_{t \geq 0}$ be simple random walk on $G$ with $X_{0} = \rootv$ for a vertex $\rootv \in V$. 
Let $A \subset V$ be a set of vertices containing $\rootv$ and 
let $T_{A}$ be the first exit time of the random walk from $A$. 
Start with initial unit mass $\delta_{\rootv}$ at $\rootv$. 
The minimum number of toppling moves needed to transport mass $p$ to outside of the set $A$ is 
\begin{equation}\label{eq:volumeexittime_bound}
N_{p} \left( G, A, \delta_{\rootv} \right) 
\leq 
\left( 1 - p \right)^{-1} \mathrm{Vol} \left( A \right) \cdot \E_{\rootv} \left[ T_{A} \right],  
\end{equation}
where $\mathrm{Vol} \left( A \right) = \left| \left\{ u \in A \right\} \right|$ denotes the volume of $A$, i.e., the number of vertices in $A$. 
\end{theorem}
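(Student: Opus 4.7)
The plan is to use toppling moves to simulate simple random walk on $G$ killed upon exit from $A$, using one ``round'' of topplings per step of the walk. After enough rounds, Markov's inequality on $T_A$ will force the bulk of the mass to have crossed the boundary of $A$. Since each round will use at most $\mathrm{Vol}(A)$ topplings, the claim follows once the number of rounds is controlled.

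In more detail: given the distribution $\mu_{t-1}$ at the start of round $t$, I would process each vertex $v \in A$ exactly once, in any order, performing the toppling move $T_v^{\mu_{t-1}(v)}$. The main point to check---and in my view the only technical step---is that these topplings are \emph{legal}, i.e., that the amount toppled never exceeds the current mass at $v$. This holds because, until $v$ itself is processed, its mass can only have grown from $\mu_{t-1}(v)$, through contributions from already processed neighbors. A short linear computation then shows that for $v \in A$,
\[
\mu_t(v) = \sum_{u \sim v,\, u \in A} \frac{\mu_{t-1}(u)}{\deg(u)},
\]
which is exactly the transition rule of simple random walk killed on exit from $A$. Mass that leaves $A$ during a round is never toppled again, because we only process vertices in $A$.

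Iterating from $\mu_0 = \delta_\rootv$ gives $\sum_{v \in A} \mu_t(v) = \P_\rootv(T_A > t)$ and hence $\sum_{v \notin A} \mu_t(v) = \P_\rootv(T_A \leq t)$. To finish, I would apply Markov's inequality in the form $\P_\rootv(T_A \geq k) \leq \E_\rootv[T_A]/k$ for each integer $k \geq 1$. Taking $t = \lceil \E_\rootv[T_A]/(1-p) \rceil - 1$ makes $\P_\rootv(T_A > t) \leq 1-p$, so mass at least $p$ sits outside $A$ after $t$ rounds. Since each round uses at most $\mathrm{Vol}(A)$ toppling moves, the total count is at most $t \cdot \mathrm{Vol}(A) \leq (1-p)^{-1}\,\mathrm{Vol}(A) \cdot \E_\rootv[T_A]$, as claimed.
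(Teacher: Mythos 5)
Your proof is correct and takes essentially the same approach as the paper's first (random-walk) proof: simulate the killed random walk by rounds of topplings, each round processing every vertex of $A$ once and toppling the mass it held at the start of that round, then control the number of rounds via $\E_\rootv[T_A]$. The only cosmetic differences are that you conclude with Markov's inequality where the paper truncates the tail-sum formula $\E_\rootv[T_A]=\sum_{k\geq 1}\P_\rootv(T_A\geq k)$, and that you spell out the legality of sequential topplings within a round (already-processed neighbours can only add mass), a point the paper handles by remarking that the moves of a round can be executed in parallel.
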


In Section~\ref{sec:rw_ub} we give two proofs of this result: one using random walk on the graph to transport the mass and the other using a greedy algorithm. The two different arguments are useful because they can be extended in different ways, which, as we shall see, allows us to obtain sharper upper bounds in specific cases. 

We now consider several specific graphs, starting with the comb graph $\mathbb{C}_{2}$, 
which is obtained from $\mathbb{Z}^{2}$ by removing all horizontal edges except those on the $x$ axis; see Figure~\ref{fig:comb} for an illustration. 
\begin{theorem}\label{thm:comb}
Start with initial unit mass $\delta_{\root0}$ at the origin $\root0$ of the comb graph $\mathbb{C}_{2}$ and let $p \in (0,1)$ be constant.  
The minimum number of toppling moves needed to transport mass $p$ to distance at least $n$ from the origin is 
\[
N_{p} \left( \mathbb{C}_{2}, B_{n}, \delta_{\root0} \right) = \Theta \left( n^{7/2} \right), 
\]
where the implied constants depend only on $p$. 
\end{theorem}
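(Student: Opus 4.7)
\emph{Proof proposal.} My plan is to prove matching upper and lower bounds of order $n^{7/2}$. The exponent arises from the anomalous diffusion on the comb: after $t$ steps simple random walk has $y$-displacement $\Theta(t^{1/2})$ (which dominates the graph distance $|x|+|y|$) and $x$-displacement only $\Theta(t^{1/4})$, so the walk occupies a region of typical size $\Theta(t^{3/4})$ and the exit time from $B_{n}$ is $\mathbb{E}_o[T_{B_n}]=\Theta(n^{2})$.

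For the upper bound I would use a two-phase strategy with $M=\lceil\sqrt{n}\,\rceil$. In phase one, spread a constant fraction of the unit mass along the spine so that each spine vertex in $[-M,M]$ holds $\Theta(1/M)$ mass; since spine vertices have degree four, each spine topple leaks half of its mass into the adjacent teeth, but a careful accounting (tracking only spine visits) shows this only inflates the one-dimensional cost by a constant factor, giving $O(M^{3})=O(n^{3/2})$ moves. In phase two, on each tooth $x\in\{-M,\ldots,M\}$ independently apply the 1D overhang result of~\cite{overhang} to push a constant fraction of the $\Theta(1/M)$ mass at the tooth base out to distance $n-|x|$ on the tooth, costing $O((n-|x|)^{3})$ per tooth and $\sum_{|x|\le M} O((n-|x|)^{3})=O(Mn^{3})=O(n^{7/2})$ in total. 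An equivalent but cleaner route refines Theorem~\ref{thm:volumeexittime}: simulate simple random walk on the comb while toppling only vertices whose mass exceeds a small threshold, so the number of moves per random walk step is $O(t^{3/4})$ and the total is $\int_{0}^{\Theta(n^{2})} O(t^{3/4})\,dt=O(n^{7/2})$.

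For the lower bound my plan is to decompose moves by tooth. Let $N_{x}$ denote the number of toppling moves at vertices $(x,y)$ with $|y|\ge 1$, and let $p_{x}$ denote the mass delivered past distance $n-|x|$ on tooth $x$, so that $\sum_{x} p_{x}\ge p$. The moves counted by $N_{x}$ are confined to tooth $x$, so a (quantitative form of the) 1D lower bound of~\cite{overhang} gives $N_{x}=\Omega((n-|x|)^{3})$ whenever $p_{x}$ is a constant fraction of the total mass ever delivered to tooth $x$. Combined with a bound on the spine moves needed to distribute this mass---itself a 1D transport problem---optimizing over the number of participating teeth shows that the minimum is attained when $\Theta(\sqrt{n})$ teeth each contribute $\Theta(1/\sqrt{n})$ mass, giving $\Omega(\sqrt{n}\cdot n^{3})=\Omega(n^{7/2})$. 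The main obstacle, which I expect to be the technical heart of the argument, is that a toppling move at a spine vertex $(x,0)$ sends mass both along the spine and directly into tooth $x$, so the decomposition into ``spine work'' and ``tooth work'' is not clean; I anticipate this is resolved either by a carefully chosen potential $\Phi(\mu)=\sum_{v} f(x_v,y_v)\mu(v)$ with constant per-move increase and $\Omega(n^{7/2})$ final value, or by a combinatorial charging scheme that assigns each move to a specific tooth, together with a quantitative strengthening of the 1D lower bound to handle the regime where $p_{x}$ is a small fraction of the mass available on tooth $x$.
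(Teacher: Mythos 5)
Your overall plan is right—matching $\Theta(n^{7/2})$ bounds, with the $3/4$-exponent region $\{|x|\lesssim t^{1/4},\,|y|\lesssim t^{1/2}\}$ and exit time $\Theta(n^2)$ as the driving heuristics—and your ``cleaner route'' for the upper bound is in fact the paper's route. The paper fixes the rectangle $R_{C,n}=[-C\sqrt{n},C\sqrt{n}]\times[-n,n]$, proves (Lemma~\ref{lem:comb_rectangle_teeth}) that random walk exits $B_n$ before exiting $R_{C,n}$ with probability $\geq 1-\eps$, and then simulates the killed walk as in Theorem~\ref{thm:volumeexittime}; cost $\mathrm{Vol}(R_{C,n})\cdot\E[T_{R_{C,n}}]=\Theta(n^{3/2})\cdot\Theta(n^2)$. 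Your threshold variant would need a lemma of the same flavor to control the discarded mass, and is otherwise equivalent.

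Your two-phase upper bound, however, contains a real gap in phase one. The claim that spreading $\Theta(1/M)$ mass onto each spine vertex of $[-M,M]$ costs $O(M^3)=O(n^{3/2})$ moves, ``tracking only spine visits,'' does not survive scrutiny: every spine topple sends half its mass into the adjacent teeth, and on the comb that mass does not return cheaply. By the time the $x$-coordinate has spread to width $\sqrt{n}$ (which requires of order $n$ spine visits, hence of order $n^2$ walk steps), the bulk of the mass is at depth $\Theta(n)$ in the teeth, not at the tooth bases, and confining it near the spine by repeated topples in a thin strip loses a constant fraction of mass in $O(1)$ rounds. Any reasonable version of phase one already costs $\Theta(n^{7/2})$, i.e.\ the same order as the whole problem, so the phases do not decouple; the paper avoids this by letting the $x$-spreading and the $y$-transport happen simultaneously inside the rectangle.

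For the lower bound, your tooth-by-tooth decomposition and the obstacle you flag (a spine topple feeds both the spine and a tooth, so $N_x$ and ``spine work'' don't cleanly add) are exactly the right concerns, but the paper's resolution is neither a bespoke potential $\Phi$ nor a charging scheme: it is the smoothing machinery of Section~\ref{sec:smoothing}. One first simulates the killed walk in $[-C\sqrt{n},C\sqrt{n}]\times(-n/2,n/2)$ so that, after $\Theta(n^2)$ steps, at least $1-\eps$ of the mass sits on the tooth-end set $S=\{(i,\pm n/2):|i|\leq C\sqrt{n}\}$ and, by a local CLT for the $x$-coordinate, is spread so that each point carries $O(1/\sqrt{n})$ mass (up to an $\eps$-mass exceptional part). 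By Corollary~\ref{cor:smoothing} this only lowers $N_p$. From there each tooth is a genuine 1D problem on disjoint vertex sets: any mass leaving $B_n$ from $(x_1,n/2)$ must cross $(x_1,n/4)$ or $(x_1,3n/4)$, and a constant fraction of the $O(1/\sqrt{n})$ mass must do so for $\Omega(\sqrt{n})$ values of $x_1$, giving $\Omega(\sqrt{n})\cdot\Omega(n^3)=\Omega(n^{7/2})$. The ``quantitative strengthening'' of the 1D bound you anticipate is indeed needed, but it is essentially free from the energy/Cauchy--Schwarz argument in Section~\ref{sec:initial_lb}: both $\mathcal{E}_a$ and $(M_2)^2$ scale as (mass)$^2$, so moving a constant \emph{fraction} of an arbitrary initial mass a distance $L$ on a line costs $\Omega(L^3)$, independent of the absolute mass. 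So your outline identifies the correct difficulties, but the mechanism that actually closes them in the paper is smoothing plus the disjoint-teeth reduction, not a new potential.
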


Figure~\ref{fig:comb} also illustrates a greedy algorithm which achieves the upper bound in Theorem~\ref{thm:comb}. 

\begin{figure}[h!]
 \centering 
 \qquad \qquad
 \includegraphics[width=0.3\textwidth]{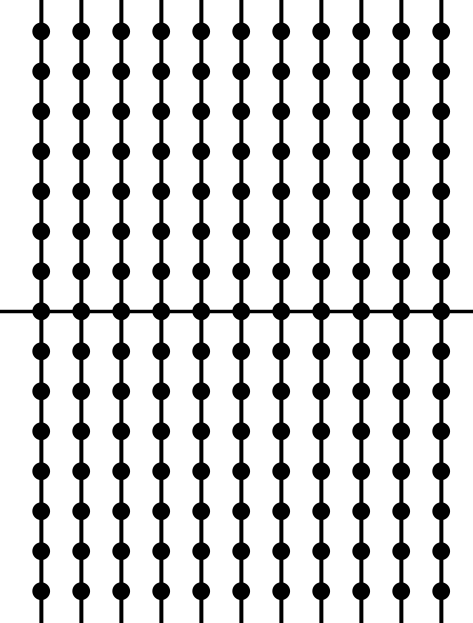} 
 \qquad 
 \includegraphics[width=0.45\textwidth, clip=true, trim=1.5in 1.5in 1.5in 1.5in]{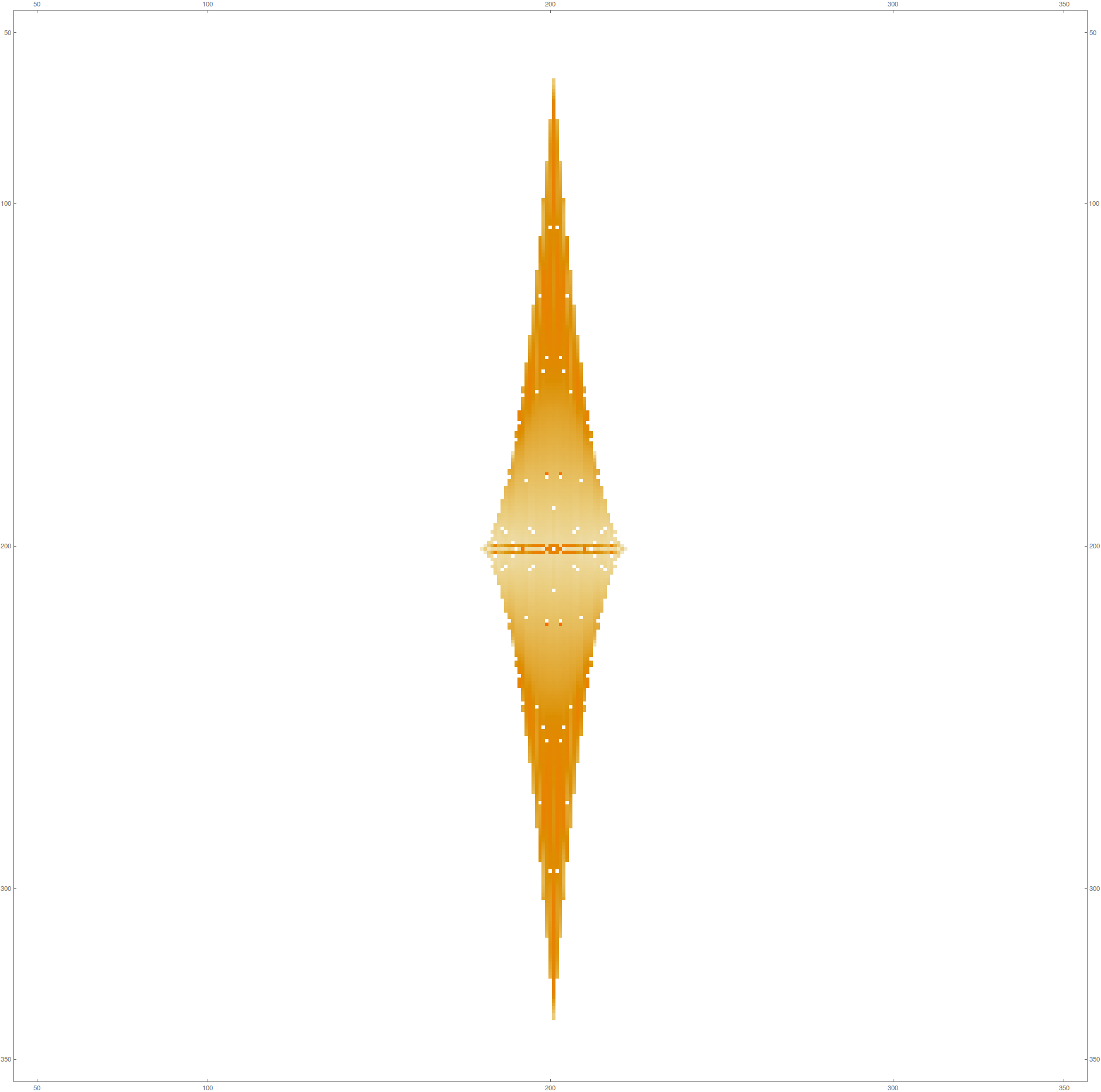}
 \caption{The comb graph $\mathbb{C}_{2}$ is on the left. On the right is the mass distribution on the comb after $10^{6}$ steps of a greedy algorithm (starting from a unit mass at the origin), with darker squares representing larger masses. The algorithm uses the same symmetric tie-breaking rule as described in the caption of Figure~\ref{fig:Z2}. The $10^{6}$ greedy steps resulted in $3439472$ toppling moves.}
 \label{fig:comb}
\end{figure}

\newpage

We also study various trees, starting with regular ones. 
\begin{theorem}\label{thm:d-ary}
Start with initial unit mass $\delta_{\rootrho}$ at the origin $\rootrho$ of the $d$-ary tree $\mathbb{T}_{d}$, $d \geq 2$, and let $p \in (0,1)$ be constant.  
The minimum number of toppling moves needed to transport mass $p$ to distance at least $n$ from the origin is 
\[
N_{p} \left( \mathbb{T}_{d}, B_{n}, \delta_{\rootrho} \right) = \Theta \left( d^{n} \right), 
\]
where the implied constants depend only on $d$ and $p$. 
\end{theorem}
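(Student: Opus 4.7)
The plan is to prove $N_p(\mathbb{T}_d, B_n, \delta_{\rootrho}) = \Theta(d^n)$ by establishing matching upper and lower bounds, exploiting the rotational symmetry of $\mathbb{T}_d$ about $\rootrho$.

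For the upper bound, Theorem~\ref{thm:volumeexittime} gives only $O(n d^n)$: the volume $|B_n| = \Theta(d^n)$ and simple random walk on $\mathbb{T}_d$ has positive speed $\ell = (d-1)/(d+1)$, so $\E_{\rootrho}[T_{B_n}] = \Theta(n)$. The main work is therefore to save the factor of $n$. The idea is that both the initial configuration $\delta_{\rootrho}$ and the target $B_n$ are invariant under $\mathrm{Aut}(\mathbb{T}_d)$ fixing $\rootrho$, so one may restrict to \emph{radial} strategies that topple all $d^k$ vertices at distance $k$ simultaneously with the same mass (a single ``round'' at level $k$, costing $d^k$ individual topplings). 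Under such strategies the distribution is encoded by its profile $(M_k)_{k\ge 0}$, and a round at level $k$ implements exactly one step of the biased random walk on $\mathbb{Z}_{\ge 0}$ with transition probabilities $d/(d+1)$ upward and $1/(d+1)$ downward. Since this walk has positive drift, its Green's function satisfies $G(0,k) = (d+1)/(d-1) = O(1)$ uniformly in $k$. A carefully chosen non-uniform schedule of rounds (using the Green's-function bound level-by-level, rather than running the walk for its full hitting time) performs only $O(1)$ rounds at each level, for a total cost of $\sum_{k=0}^{n-1} O(1)\cdot d^k = O(d^n)$.

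For the lower bound, let $W^{(k)}$ denote the total mass toppled at level $k$ across the whole process. Flux conservation across the cut between levels $k$ and $k+1$ (for $1 \le k \le n-1$) forces
\[
W^{(k)} \cdot \tfrac{d}{d+1} - W^{(k+1)} \cdot \tfrac{1}{d+1} \ge p,
\]
with an analogous inequality at the root (degree $d$), and iterating gives $W^{(k)} = \Omega_p(1)$ for every $k$. For a radial strategy the mass at any single level-$k$ vertex is at most $1/d^k$, so the number of individual topplings at level $k$ is at least $W^{(k)} \cdot d^k = \Omega(d^k)$, summing to $\Omega(d^n)$. Extending to general (possibly non-radial) strategies uses either a symmetrization argument, or a direct tradeoff: a strategy that concentrates mass on few root-to-leaf paths loses a factor of $1/(d+1)$ per level along each path, and so must perform $\Omega((d+1)^n) \ge \Omega(d^n)$ topplings at the root. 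Alternatively, the same conclusion follows from the paper's general entropy-based lower bound applied to $\mathbb{T}_d$, using $h = \log d \cdot (d-1)/(d+1)$ and $\ell = (d-1)/(d+1)$, which gives $\exp(nh/\ell) = d^n$.

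The main obstacle is improving Theorem~\ref{thm:volumeexittime} from $O(n d^n)$ to the sharp $O(d^n)$: this requires the symmetry-based reduction to the one-dimensional biased walk on $\mathbb{Z}_{\ge 0}$ and the uniform-in-$k$ Green's function estimate, rather than a generic random-walk emulation on the full tree. The lower bound is relatively straightforward once the reduction to the biased walk is in place; the concentration-vs-spreading argument is the main new ingredient needed to handle non-radial strategies without appealing to the general entropy machinery.
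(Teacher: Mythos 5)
Your plan is genuinely different from the paper's on both sides, and in each case the hard step is the one you leave as a sketch. For the upper bound, the paper runs the greedy algorithm and tracks the average level $M_1[\mu] = \sum_v \mu(v)\,\ell(v)$; the key ingredient is a lemma (Lemma~\ref{lem:existence_of_large_mass}) stating that if $M_1[\mu]\le \ell$ then some vertex at level $\le\ell$ carries mass $\ge d^{-(\ell+1)}/4$, so each greedy move increases $M_1$ by $\Omega(d^{-\ell})$, the greedy spends $O(d^\ell)$ moves getting past level $\ell$, and the sum telescopes to $O(d^n)$. Your radial-strategy plan aims for the same total, but the ``carefully chosen non-uniform schedule'' is precisely where the work is, and it is not spelled out: toppling a single level in isolation does not implement one step of the biased walk on $\mathbb{Z}_{\ge 0}$ (the other levels are frozen), so a naive level-by-level schedule gives a process whose marginal is not the killed walk. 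What is needed is something like a moving window (topple a $\Theta(1)$-wide band of levels that advances at speed $\approx (d-1)/(d+1)$), together with a large-deviation estimate controlling the mass that leaks out of the band on either side. That can be made to work, and would be an interesting alternative to the paper's potential argument, but as written the step is a gap, not a proof.

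The lower bound has the more serious gap. Your flux identity at the cut between levels $k$ and $k{+}1$ and the $1/d^k$ per-vertex mass bound give $\Omega(d^n)$ for radial strategies, but the extension to general strategies is only gestured at. The ``direct tradeoff'' claim is unquantified: a strategy that uses, say, $d^{k/2}$ vertices at level $k$ is neither radial nor path-concentrated, and you give no interpolation. The ``symmetrization argument'' is the right idea, but it is in fact the paper's smoothing machinery (Lemma~\ref{lem:smoothing} and Corollary~\ref{cor:smoothing}), whose whole point is to let one pass from $\delta_\rho$ to an essentially uniform distribution on level $n-1$ without increasing $N_p$; once that is done the lower bound is an immediate count of topplings at level $n-1$, and you never have to reason about arbitrary non-radial strategies directly. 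Finally, the entropy-based fallback via Theorem~\ref{thm:general_pos_speed} cannot rescue the sharp statement: that theorem only yields $\exp\bigl(n\,\boldsymbol{h}/\boldsymbol{\ell}\,(1+o(1))\bigr) = d^{n(1+o(1))}$, which is strictly weaker than $\Theta(d^n)$; the paper explicitly points out after Theorem~\ref{thm:GW} that Theorem~\ref{thm:d-ary} is sharper than the general entropy bound for $m$-ary trees precisely because of that $(1+o(1))$ in the exponent. So both halves of your proposal need the missing ideas (a precise schedule with leakage control for the upper bound; smoothing or a concrete symmetrization for the lower bound) before they constitute proofs.
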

We prove a general result for graphs where random walk has positive speed $\ell$ and entropy $h$ and which satisfy Shannon's theorem. 
This roughly states that 
$N_{p} \left( G, B_{n}, \delta_{\rooto} \right) = \exp \left( n \cdot \frac{h}{\ell} \cdot \left( 1 + o \left( 1 \right) \right) \right)$; 
see Section~\ref{sec:proofs_pos_speed-entropy} for a precise statement. 
This result can then be applied to specific examples, such as Galton-Watson trees and the product of two trees. 
\begin{theorem}\label{thm:GW}
Fix an offspring distribution with mean $m > 1$ and 
let $\mathrm{GWT}$ be a Galton-Watson tree obtained with this offspring distribution, on the event of nonextinction. 
Start with initial unit mass $\delta_{\rootrho}$ at the root $\rootrho$ of $\mathrm{GWT}$ and let $p \in \left(0,1\right)$ be constant. 
The minimum number of toppling moves needed to transport mass $p$ to distance at least $n$ from the origin is almost surely 
\[
N_{p} \left( \mathrm{GWT}, B_{n}, \delta_{\rootrho} \right) = \exp \left( \mathbf{dim} \cdot n \, \left( 1 + o \left( 1 \right) \right) \right),
\]
where $\mathbf{dim}$ is the dimension of harmonic measure and where the implied constants depend only on $p$ and the offspring distribution. 
\end{theorem}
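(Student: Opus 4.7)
The plan is to obtain this theorem as a direct consequence of the general result announced just above it, which asserts that for any graph $G$ on which simple random walk has positive speed $\ell$, positive asymptotic entropy $h$, and satisfies Shannon's theorem, one has
\[
N_{p}(G, B_{n}, \delta_{\rooto}) = \exp\left( n \cdot \frac{h}{\ell} \cdot (1 + o(1)) \right).
\]
Given this, it suffices to (i) check that the three hypotheses hold quenched almost surely for simple random walk on $\mathrm{GWT}$ conditioned on non-extinction, and (ii) identify $h/\ell$ with the dimension $\mathbf{dim}$ of harmonic measure.

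Step (i) is provided by the classical theory of random walks on Galton-Watson trees due to Lyons, Pemantle, and Peres. For a supercritical offspring distribution with mean $m > 1$, on the event of non-extinction $\mathrm{GWT}$ is a.s.\ infinite and simple random walk $(X_{t})_{t \geq 0}$ from the root is transient; moreover there exist \emph{deterministic} constants $\ell > 0$ and $h > 0$, depending only on the offspring distribution, such that $d_{\mathrm{GWT}}(X_{t}, \rootrho)/t \to \ell$ and $-\log p_{t}^{\mathrm{GWT}}(\rootrho, X_{t})/t \to h$ almost surely, the latter being the quenched form of Shannon's theorem required by the general theorem. The same body of work yields the identity $\mathbf{dim} = h/\ell$ for the Hausdorff dimension of harmonic measure on the boundary $\partial \mathrm{GWT}$, which handles step (ii). Substituting this identity into the general result gives the stated asymptotic almost surely on non-extinction.

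The main obstacle I anticipate is not in the structure of the deduction but in verifying that the general theorem is truly applicable in the \emph{quenched} sense. Its proof will presumably require some uniform control on the local geometry of the graph (for example, a quantitative rate in Shannon's theorem, or concentration of $d_{\mathrm{GWT}}(X_{t}, \rootrho)$ around $\ell t$), and one has to check that such control holds almost surely on $\mathrm{GWT}$. For bounded offspring distributions nothing beyond step (i) is needed; for general offspring distributions the standard integrability assumption $\E[Z \log Z] < \infty$ ensures that the limits defining $\ell$ and $h$ exist a.s., and combined with standard exponential concentration of $d_{\mathrm{GWT}}(X_{t}, \rootrho)$ this turns the general $o(1)$ into an a.s.\ $o(1)$ with respect to the joint randomness of the tree and the walk, which is what the theorem claims.
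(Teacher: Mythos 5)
Your proposal is correct and takes essentially the same route as the paper: the paper proves Theorem~\ref{thm:GW} by citing the Lyons--Pemantle--Peres results (restated as Theorem~\ref{thm:LPP_GWT}) to verify the three hypotheses of Theorem~\ref{thm:general_pos_speed} and then substituting $\mathbf{dim}$ for $\boldsymbol{h}/\boldsymbol{\ell}$, exactly as you propose. Incidentally, your identity $\mathbf{dim}=\boldsymbol{h}/\boldsymbol{\ell}$ is the correct dimension--entropy--speed relation, whereas the paper's statement of Theorem~\ref{thm:LPP_GWT} has it written as $\boldsymbol{\ell}/\boldsymbol{h}$, which appears to be a typo.
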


When the offspring distribution is degenerate (i.e., every vertex has exactly $m$ offspring and hence the tree is the $m$-ary tree $\mathbb{T}_{m}$), then Theorem~\ref{thm:d-ary} provides a sharper result than Theorem~\ref{thm:GW}. 
However, when the offspring distribution is nondegenerate, then $\mathbf{dim} < \log m$ almost surely (see~\cite{lpp}) and hence the number of toppling moves necessary is exponentially smaller than the volume of $B_{n}$.

\begin{theorem}\label{thm:product_trees}
Let $\mathbb{T}_{d}$ denote the $(d+1)$-regular tree. 
Start with initial unit mass $\delta_{\rootrho}$ at the origin $\rootrho$ of the product of two regular trees, $\mathbb{T}_{d} \times \mathbb{T}_{k}$, and let $p \in (0,1)$ be constant.  
Assume that $d \geq k \geq 1$ and $d + k \geq 3$. 
The minimum number of toppling moves needed to transport mass $p$ to distance at least $n$ from the origin is 
\[
N_{p} \left( \mathbb{T}_{d} \times \mathbb{T}_{k}, B_{n}, \delta_{\rootrho} \right) = \theta \left( d, k \right)^{n \left( 1 + o \left( 1 \right) \right)}, 
\]
where 
$\theta \left( d, k \right) = d^{\frac{d-1}{d+k-2}} \cdot k^{\frac{k-1}{d+k-2}}$, 
and where the implied constants depend only on $d$, $k$, and $p$. 
\end{theorem}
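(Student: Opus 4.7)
The plan is to apply the general positive-speed/positive-entropy theorem of Section~\ref{sec:proofs_pos_speed-entropy}, which gives $N_p(G, B_n, \delta_o) = \exp(n \cdot h/\ell \cdot (1+o(1)))$ for any graph $G$ on which SRW has positive speed $\ell$, positive Avez entropy $h$, and satisfies Shannon's theorem. Since $G = \mathbb{T}_d \times \mathbb{T}_k$ is vertex-transitive (a Cartesian product of two vertex-transitive graphs), Shannon's theorem for SRW on $G$ is standard, via Derriennic--Kaimanovich--Vershik or Kingman's subadditive ergodic theorem applied to $-\log p_n(\rho, Z_n)$. Positivity of $\ell$ and $h$ is verified below. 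It therefore suffices to compute $\ell$ and $h$ on $G$ and show that $h/\ell = \log \theta(d,k)$.

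The key observation is that SRW on $G$ decouples into two slowed-down SRWs on the factors. Every vertex has degree $d+k+2$, so at each step the walker moves in the $\mathbb{T}_d$-coordinate (to a uniform $\mathbb{T}_d$-neighbor) with probability $p_1 := (d+1)/(d+k+2)$ and otherwise in the $\mathbb{T}_k$-coordinate with probability $p_2 := (k+1)/(d+k+2)$. Writing $(X_n, Y_n)$ for the position at time $n$ and $N_d(n)$ for the number of steps taken in the $\mathbb{T}_d$ factor, the law of large numbers gives $N_d(n)/n \to p_1$ a.s., and conditional on the sequence of factor-choices, the processes $X$ and $Y$ are independent SRWs on $\mathbb{T}_d$ and $\mathbb{T}_k$ run for $N_d(n)$ and $n-N_d(n)$ steps respectively. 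Combining this with the additive distance $d_G((u,v),(u',v')) = d_{\mathbb{T}_d}(u,u') + d_{\mathbb{T}_k}(v,v')$ and with the classical values of speed $(d-1)/(d+1)$ and Avez entropy $\tfrac{d-1}{d+1}\log d$ for SRW on the $(d+1)$-regular tree, one obtains
\begin{align*}
\ell &= \tfrac{d+1}{d+k+2}\cdot\tfrac{d-1}{d+1} + \tfrac{k+1}{d+k+2}\cdot\tfrac{k-1}{k+1} = \tfrac{d+k-2}{d+k+2}, \\
h &= \tfrac{d+1}{d+k+2}\cdot\tfrac{d-1}{d+1}\log d + \tfrac{k+1}{d+k+2}\cdot\tfrac{k-1}{k+1}\log k = \tfrac{(d-1)\log d + (k-1)\log k}{d+k+2}.
\end{align*}
Dividing yields $h/\ell = \log \theta(d,k)$, as required. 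The hypothesis $d \geq k \geq 1$ with $d+k \geq 3$ forces $d \geq 2$, so both $\ell$ and $h$ are strictly positive; in the edge case $k=1$ we have $\mathbb{T}_1 = \mathbb{Z}$, which simply contributes nothing to either expression, leaving $h/\ell = \log d = \log\theta(d,1)$.

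The most delicate step is justifying the entropy formula rigorously. Writing $h_d := \tfrac{d-1}{d+1}\log d$ and $h_k := \tfrac{k-1}{k+1}\log k$, one decomposes $H(X_n, Y_n) = H(N_d(n)) + H(X_n, Y_n \mid N_d(n)) - H(N_d(n) \mid X_n, Y_n)$, where the first term is $O(\log n)$ and by conditional independence of the coordinates given $N_d(n)$ the middle term splits as $H(X_n \mid N_d(n)) + H(Y_n \mid N_d(n))$. Using concentration $N_d(n) = p_1 n + O(\sqrt{n \log n})$ a.s.\ together with the known linear-in-time growth of the factor entropies, these conditional entropies evaluate to $p_1 n h_d + o(n)$ and $p_2 n h_k + o(n)$, giving $h = p_1 h_d + p_2 h_k$. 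The main obstacle I anticipate is making this decomposition compatible with the almost-sure version of Shannon's theorem (rather than merely the mean version); here I would either appeal directly to the vertex-transitive Shannon theorem for the product graph, bypassing the issue, or carry out the factor-by-factor reduction pointwise using large deviation bounds for the binomial $N_d(n)$ and the uniform convergence in Shannon's theorem on each factor.
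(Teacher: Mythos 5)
Your proposal is correct and follows essentially the same route as the paper: invoke the general positive-speed/positive-entropy theorem, note that Shannon's theorem holds because $\mathbb{T}_d \times \mathbb{T}_k$ is transitive, compute $\boldsymbol{\ell}$ and $\boldsymbol{h}$ by decoupling the walk into two slowed-down tree walks via conditioning on the number of steps in each factor, and observe that $\boldsymbol{h}/\boldsymbol{\ell} = \log\theta(d,k)$. Your entropy chain-rule decomposition is in fact slightly more careful than the paper's (you explicitly carry the $-H(N_d(n)\mid X_n,Y_n)$ correction term, which is $O(\log n)$ and harmless), but the substance is identical.
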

When $d > k \geq 2$, then the volume of a ball grows as $\mathrm{Vol} \left( B_{n} \right) = \Theta \left( d^{n} \right)$, 
whereas $\theta \left( d, k \right) < d$. 
Hence the number of toppling moves necessary to transport a constant mass to distance $n$ from the root is exponentially smaller than the volume of the ball of radius $n$.

Finally, we consider graphs of bounded degree with exponential decay of the Green's function for simple random walk (see Definition~\ref{def:green}). 
\begin{theorem}\label{thm:bdd_deg_Green}
Let $G = \left( V, E \right)$ be an infinite, connected graph of bounded degree with exponential decay of the Green's function for simple random walk on $G$. 
Start with initial unit mass $\delta_{\rootv}$ at a vertex $\rootv \in V$ and let $p \in (0,1)$ be constant. 
The minimum number of toppling moves needed to transport mass $p$ to distance at least $n$ from $\rootv$ is 
\[
N_{p} \left( G, B_{n}, \delta_{\rootv} \right) = \exp \left( \Theta \left( n \right) \right),
\]
where the implied constants depend only on $p$, the maximum degree of $G$, and the exponent in the exponential bound on the Green's function. 
\end{theorem}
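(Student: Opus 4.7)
The plan is a two-sided argument: the upper bound $\exp(O(n))$ comes from Theorem~\ref{thm:volumeexittime}, and the matching lower bound $\exp(\Omega(n))$ comes from a direct pointwise bound on the mass via the controlled-walker interpretation.

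For the \emph{upper bound}, bounded degree $\Delta$ immediately gives $\mathrm{Vol}(B_n) \leq \Delta^n$. For the expected exit time, I dominate the killed Green's function by the full one to get $\E_{\rootv}[T_{B_n}] \leq \sum_{v \in B_n} G(\rootv, v)$; plugging in the hypothesized decay $G(\rootv, v) \leq C \alpha^{d(\rootv,v)}$ with $\alpha \in (0,1)$ and summing over spheres, $\E_{\rootv}[T_{B_n}] \leq C \sum_{k=0}^{n-1} \Delta^k \alpha^k \leq C n \cdot \max\{1, (\Delta\alpha)^{n-1}\} = \exp(O(n))$. Inserting into Theorem~\ref{thm:volumeexittime} gives $N_p(G, B_n, \delta_{\rootv}) \leq (1-p)^{-1} \exp(O(n))$.

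For the \emph{lower bound}, I use the probabilistic interpretation from the introduction. Given any controller strategy $v_1, \ldots, v_T$, define a particle $X$ by $X_0 = \rootv$ and $X_t$ equal to a uniform random neighbor of $v_t$ if $X_{t-1} = v_t$, else $X_t = X_{t-1}$; an easy induction shows $\mu_T = \mathrm{Law}(X_T)$. The crucial observation is that the subchain $(\tilde X_k)_{k \geq 0}$ consisting of the actual moves of $X$ is distributed as simple random walk on $G$ started at $\rootv$, regardless of the (possibly adaptive) controller strategy, since every move goes to a uniformly chosen neighbor independently of the past. Consequently, for every vertex $v$,
\[
\mu_T(v) \;=\; \mathbb{P}(X_T = v) \;\leq\; \mathbb{P}\bigl(\exists\, k \geq 0 : \tilde X_k = v\bigr) \;\leq\; G(\rootv, v) \;\leq\; C\, \alpha^{d(\rootv, v)},
\]
so $\mu_T(v) \leq C \alpha^n$ for every $v \notin B_n$. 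Separately, each toppling move at a vertex $w$ adds at most $\deg(w) \leq \Delta$ vertices to the support, so $|\mathrm{supp}(\mu_T)| \leq 1 + T\Delta$. Combining,
\[
p \;\leq\; \sum_{v \notin B_n} \mu_T(v) \;\leq\; |\mathrm{supp}(\mu_T)| \cdot \max_{v \notin B_n} \mu_T(v) \;\leq\; (1 + T\Delta)\, C \alpha^n,
\]
which rearranges to $T \geq \exp(\Omega(n))$ with constants depending only on $p$, $\Delta$, $C$, and $\alpha$.

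The only conceptually delicate step is the walker interpretation --- specifically, verifying that the jump subchain $(\tilde X_k)$ has the law of simple random walk independently of the controller's strategy. Once that is in hand the lower bound is a one-line envelope inequality: the pointwise Green's-function bound on $\mu_T$ times the linear-in-$T$ growth of the support.
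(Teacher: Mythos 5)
Your upper bound matches the paper's (apply Theorem~\ref{thm:volumeexittime}, bound $\mathrm{Vol}(B_n)$ by the degree and $\E_{\rootv}[T_{B_n}]$ by summing the Green's function over $B_n$), but your lower bound takes a genuinely different and rather elegant route. The paper's lower bound uses the smoothing machinery (Corollary~\ref{cor:smoothing}): simulate the killed walk on $B_{n-1}$ until most of the mass sits on $\partial B_{n-1}$, bound the per-vertex boundary mass by the Green's function, then count the topplings needed to push mass off the boundary. You bypass smoothing entirely: via the controlled-walker interpretation, the jump subchain of the walker is simple random walk regardless of the control sequence, giving the pointwise bound $\mu_T(v) \leq \p_{\rootv}(\text{SRW ever hits }v) \leq g(\rootv,v)$ for \emph{every} mass distribution $\mu_T$ reachable from $\delta_{\rootv}$; combined with the support bound $|\mathrm{supp}(\mu_T)| \leq 1 + T\Delta$, the conclusion is a one-line envelope inequality. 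This is self-contained and avoids the smoothing lemma altogether, though you should address partial topplings explicitly --- either invoke Corollary~\ref{cor:full_toppling} to reduce to full topplings, or augment the walker with an independent acceptance coin at each command, which preserves the SRW law of the jump subchain. It is also instructive to note where your scheme stops working: the pointwise bound $\mu_T(v) \leq g(\rootv,v)$ holds on any transient graph, but on $\mathbb{Z}^d$ (where $g$ decays only polynomially) the same support-counting argument would give only $\Omega(n^{d-2})$, far short of $\Theta(n^{d+2})$, so smoothing is not dispensable in the polynomial regime even though it is here.
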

See Section~\ref{sec:proofs_bdd_deg_Green} where this result is restated more precisely as Theorem~\ref{thm:bdd_deg_Green_restatement} and then proved, 
and where we illustrate this result with the example of the lamplighter graph.

\subsection{Notation and preliminaries} \label{sec:notation} 

Let $G = \left( V, E \right)$ be a graph and let $\mathcal{N}_{v} := \left\{ y \in V : d_{G} \left( y, v \right) = 1 \right\}$ denote the neighborhood of a vertex $v \in V$. 
All graphs we consider in this paper are connected and locally finite (i.e., every vertex has finite degree). 
We also write $y \sim v$ for $y \in \mathcal{N}_{v}$. 
The discrete Laplacian $\Delta$ acting on functions $f : V \to \mathbb{R}$ is defined as 
\begin{equation}\label{eq:discrete_laplacian}
\Delta f \left( x \right) := \frac{1}{\left| \mathcal{N}_{x} \right|} \sum_{y \sim x} f \left( y \right) - f \left( x \right).  
\end{equation}
We can then write how a toppling move $T_{v}^{m}$ acts on a mass distribution $\mu$ as 
\begin{equation}\label{eq:toppling_move}
T_{v}^{m} \mu = \mu - m \delta_{v} + \frac{m}{\left| \mathcal{N}_{v} \right|} \sum_{y \sim v} \delta_{y} = \mu + m \Delta \delta_{v}. 
\end{equation}
We recall the well-known fact that if $G$ is a regular graph and $f$ and $g$ are two functions from $V$ to $\mathbb{R}$, with at least one of them having finite support, then 
\begin{equation}\label{eq:summation_by_parts}
\sum_{x \in V} f \left( x \right) \Delta g \left( x \right) = \sum_{x \in V} \Delta f \left( x \right) g \left( x \right), 
\end{equation}
an equality which we refer to as summation by parts. 

We also define the second moment of a mass distribution $\mu$ on $\mathbb{Z}^{d}$ as 
\begin{equation}\label{eq:second_moment}
M_{2} \left[ \mu \right] = \sum_{v \in \mathbb{Z}^{d}} \mu \left( v \right) \cdot \left\| v \right\|_{2}^{2}. 
\end{equation}

\section{Upper bound on $\mathbb{Z}^{d}$ and preliminaries for the lower bound} \label{sec:ub_and_prelims} 

We start with an upper bound on $N_{p} \left( \mathbb{Z}^{d}, B_{n}, \delta_{\root0} \right)$, stated as Theorem~\ref{thm:greedy_ub_Zd} below, which can be obtained by a greedy algorithm. 
We then introduce preliminaries for a lower bound argument which uses an appropriately defined potential. As we shall see, applying this argument directly leads to a lower bound of the correct order only in the case of $d = 1$. 
Additional ideas are required to obtain a tight lower bound for $d \geq 2$, which are then presented in Section~\ref{sec:smoothing_and_lb}.

\subsection{A greedy upper bound on $\mathbb{Z}^{d}$} \label{sec:Zd_ub} 

We use a greedy algorithm to provide an upper bound on the number of toppling moves needed to transport mass $p$ to distance $n$ from the origin in $\mathbb{Z}^{d}$.

\begin{theorem}\label{thm:greedy_ub_Zd}
Start with initial unit mass $\delta_{\root0}$ at the origin $\root0$ of $\mathbb{Z}^{d}$, $d \geq 1$. 
The minimum number of toppling moves needed to transport mass $p$ to distance at least $n$ from the origin satisfies
\begin{equation}\label{eq:Zd_greedy_ub}
N_{p} \left( \mathbb{Z}^{d}, B_{n}, \delta_{\root0} \right) < \frac{2^{d}}{\left( 1 - p \right) \times d!} n^{d+2} . 
\end{equation}
\end{theorem}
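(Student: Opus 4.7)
The plan is to define a greedy threshold algorithm and control it with an explicit quadratic potential. Set $\tau := (1-p)/|B_n|$ and, so long as some vertex $v \in B_n$ has mass at least $\tau$, topple the entire mass at $v$. If this procedure halts, then every vertex of $B_n$ carries mass less than $\tau$, so the mass remaining in $B_n$ is strictly less than $\tau |B_n| = 1-p$, meaning mass at least $p$ lies outside $B_n$ as required. The problem reduces to bounding the number of toppling steps performed.

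To count the steps, I would introduce the potential $\Phi(\mu) := \sum_{v \in \mathbb{Z}^d} \mu(v)\, g(v)$ with $g(v) := n^2 - \|v\|_2^2$. Since the neighbors of any $v$ are $v \pm e_i$ and $\sum_{y \sim v}\|y\|_2^2 = 2d\|v\|_2^2 + 2d$, a short calculation gives $\Delta g \equiv -1$ on $\mathbb{Z}^d$. Combined with \eqref{eq:toppling_move} and the summation-by-parts identity \eqref{eq:summation_by_parts}, a toppling move $T_v^m$ therefore changes $\Phi$ by exactly $m\,\Delta g(v) = -m$, so each greedy step decreases $\Phi$ by at least $\tau$.

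The final ingredient is nonnegativity of $\Phi$. Because toppling moves are only performed inside $B_n$, the support of $\mu$ can never leave $\{v : \|v\|_1 \leq n\}$, and for any such $v$ one has $g(v) = n^2 - \|v\|_2^2 \geq n^2 - \|v\|_1^2 \geq 0$. Hence $\Phi \geq 0$ throughout, and the number of toppling steps is at most $\Phi(\delta_{\rooto})/\tau = n^2/\tau = n^2 |B_n|/(1-p)$. Inserting the standard volume bound $|B_n| \leq 2^d n^d/d!$ (a stars-and-bars count for lattice points in the discrete $\ell_1$ ball, or an embedding into the continuous $\ell_1$ ball) then yields \eqref{eq:Zd_greedy_ub}.

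I expect the only genuine judgment call to be the choice of potential. The quadratic $g$ is essentially forced: one wants a function with constant Laplacian that is still nonnegative on the enlarged ball $\{\|v\|_1 \leq n\}$, mirroring the fact that $\E_v[T_{B_n}]$ grows like $n^2 - \|v\|_2^2$ for simple random walk on $\mathbb{Z}^d$. Once $g$ is identified, the rest is bookkeeping, and the one subtle step -- that mass never reaches a region where $g < 0$ -- is automatic from the fact that the greedy procedure only fires inside $B_n$.
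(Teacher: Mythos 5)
Your proof is correct and is essentially the same as the paper's: your potential $\Phi(\mu) = \sum_v \mu(v)(n^2 - \|v\|_2^2)$ is simply $n^2 - M_2[\mu]$ where $M_2$ is the paper's second-moment functional, so tracking the decrease of $\Phi$ is identical to the paper's argument that each toppling of mass $m_i$ increases $M_2$ by exactly $m_i$ while $M_2$ stays at most $n^2$. The only cosmetic difference is that you topple any vertex whose mass exceeds the threshold $\tau = (1-p)/|B_n|$, whereas the paper always topples the vertex of maximal mass; both give the same per-step lower bound $m_i \geq (1-p)/|B_n|$ and hence the same count.
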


\begin{proof}
Consider the following greedy algorithm for choosing toppling moves:  
until the mass outside of $B_{n}$ is at least $p$, 
choose $v \in B_{n}$ with the largest mass in $B_{n}$ (break ties arbitrarily) 
and topple the full mass at $v$. 
Let $\mu_{0} \equiv \delta_{\root0}, \mu_{1}, \mu_{2}, \dots$ denote the resulting mass distributions, 
let $v_{i}$ denote the vertex that was toppled to get from $\mu_{i-1}$ to $\mu_{i}$, 
and let $m_{i}$ denote the mass that was toppled at this step. 
By~\eqref{eq:toppling_move} we can then write 
\begin{equation}\label{eq:mu_change}
\mu_{i} = \mu_{i-1} + m_{i} \Delta \delta_{v_{i}}. 
\end{equation}
Furthermore, let $t$ denote the number of moves necessary for this greedy algorithm to transport mass $p$ to distance at least $n$ from the origin, 
i.e., $t = \min \left\{ i \geq 1 : \mu_{i} \left( B_{n} \right) \leq 1 - p \right\}$.

We first compute how the second moment of the mass distribution changes after each toppling move. By~\eqref{eq:mu_change} we can write
\[
M_{2} \left[ \mu_{i} \right] - M_{2} \left[ \mu_{i-1} \right] 
= \sum_{x \in \mathbb{Z}^{d}} \mu_{i} \left( x \right) \left\| x \right\|_{2}^{2} - \sum_{x \in \mathbb{Z}^{d}} \mu_{i-1} \left( x \right) \left\| x \right\|_{2}^{2} 
= m_{i} \sum_{x \in \mathbb{Z}^{d}} \Delta \delta_{v_{i}} \left( x \right) \cdot \left\| x \right\|_{2}^{2}. 
\]
Now using summation by parts (see~\eqref{eq:summation_by_parts}) and the fact that $\Delta \left\| x \right\|_{2}^{2} = 1$ for every $x \in \mathbb{Z}^{d}$, we get that 
\[
\sum_{x \in \mathbb{Z}^{d}} \Delta \delta_{v_{i}} \left( x \right) \cdot \left\| x \right\|_{2}^{2} 
= \sum_{x \in \mathbb{Z}^{d}} \delta_{v_{i}} \left( x \right) \cdot \Delta \left\| x \right\|_{2}^{2} 
= \sum_{x \in \mathbb{Z}^{d}} \delta_{v_{i}} \left( x \right) 
= 1.
\]
Putting the previous two displays together we thus obtain that 
\[
M_{2} \left[ \mu_{i} \right] - M_{2} \left[ \mu_{i-1} \right] = m_{i}.
\] 
The greedy choice implies that for every $i \leq t$ we must have that 
\[
m_{i} 
\geq \frac{\mu_{i-1} \left( B_{n} \right)}{\left| B_{n} \right|} 
> \frac{1-p}{\left| B_{n} \right|}. 
\] 
This gives us the following lower bound on the second moment of $\mu_{t}$: 
\begin{equation}\label{eq:M2_greedy_lb}
M_{2} \left[ \mu_{t} \right] 
= \sum_{i=1}^{t} \left( M_{2} \left[ \mu_{i} \right] - M_{2} \left[ \mu_{i-1} \right] \right) 
= \sum_{i=1}^{t} m_{i} 
> t \times \frac{\left( 1 - p \right)}{\left| B_{n} \right|}.
\end{equation}

On the other hand, all vertices with positive mass at time $t$ have (graph) distance at most $n$ from the origin, 
and hence 
$\left\| v \right\|_{2}^{2} \leq n^{2}$ 
for every $v \in \mathbb{Z}^{d}$ 
such that $\mu_{t} \left( v \right) > 0$, 
which implies that $M_{2} \left[ \mu_{t} \right] \leq n^{2}$. 
Combining this with~\eqref{eq:M2_greedy_lb} we obtain that 
$t < \left| B_{n} \right| \times n^{2} / \left( 1 - p \right)$. 
The claim in~\eqref{eq:Zd_greedy_ub} 
then follows from the estimate 
$\left| B_{n} \right| \leq \left( 2^{d} / d! \right) n^{d}$ 
on the size of the (open) ball of radius $n$. 
\end{proof}

\begin{remark}
The greedy algorithm described in the proof above requires a tie-breaking rule, which  breaks the symmetries of $\mathbb{Z}^{d}$. 
It is also natural to consider a greedy algorithm that keeps the symmetries of $\mathbb{Z}^{d}$, such as the one described and illustrated in Figure~\ref{fig:Z2}. 
The same proof as above shows that this also transports mass $p$ to distance at least $n$ in at most 
$O \left( n^{d+2} \right)$ 
toppling moves. 
\end{remark}

\subsection{Energy of measure and potential kernel} \label{sec:energy} 

To obtain a lower bound it is natural to combine the second moment estimates with estimates for an appropriately defined potential function. We consider here a quantity called the \emph{energy} of the measure. This subsection contains the necessary definitions, together with properties of the Green's function for random walk on $\mathbb{Z}^{d}$, which are required for subsequent estimates.

\begin{definition}\label{def:energy}
The \emph{energy} of a measure $\mu$ on $\mathbb{Z}^{d}$ is defined as 
\[
 \mathcal{E}_{a} \left[ \mu \right] = \sum_{x,y \in \mathbb{Z}^{d}} a \left( x - y \right) \mu \left( x \right) \mu \left( y \right),
\]
where $a$ is the \emph{potential kernel} function. 
\end{definition} 
The energy of measure is a classical quantity; 
for more details regarding the physical context in which it arises, see, for example,~\cite{doob}. 
We will use the energy of the measure $\mu$ 
with the potential kernel function defined using the Green's function for random walk on $\mathbb{Z}^{d}$, 
which we introduce next. 

\begin{definition}[Green's function]\label{def:green}
For a random walk $\left\{ X_{k} \right\}_{k \geq 0}$ on a graph $G = \left( V, E \right)$, 
the Green's function 
$g : V \times V \to \left[ 0, \infty \right]$ 
is defined as 
\[
g \left( x, y \right) 
:= \mathbb{E}_{x} \left[ \# \left\{ k \geq 0 : X_{k} = y \right\} \right] 
= \sum_{k=0}^{\infty} \mathbb{P}_{x} \left( X_{k} = y \right) 
= \sum_{k=0}^{\infty} p^{k} \left( x, y \right),
\]
where $\mathbb{P}_{x}$ and $\mathbb{E}_{x}$ denote probabilities and expectations given that $X_{0} = x$, 
and $p^{k} \left( \cdot, \cdot \right)$ denotes the $k$-step transition probabilities. 
That is, $g \left( x, y \right)$ is the expected number of visits to $y$ by the random walk started at $x$.  
\end{definition}
Since $\mathbb{Z}^{d}$ is translation invariant, we have that 
$g \left( x, y \right) = g \left( \root0, y - x \right)$ 
for simple random walk on $\mathbb{Z}^{d}$, 
where $\root0$ denotes the origin of $\mathbb{Z}^{d}$. 
It is thus natural to define 
$g \left( x \right) := g \left( \root0, x \right)$ 
as the Green's function in $\mathbb{Z}^{d}$. 
Note that 
$g \left( x \right) = g \left( - x \right)$ 
by symmetry. 
For $d \geq 3$, simple random walk is transient in $\mathbb{Z}^{d}$ and hence $g \left( x \right)$ is finite for every $x \in \mathbb{Z}^{d}$. 
Since simple random walk is recurrent in $\mathbb{Z}$ and $\mathbb{Z}^{2}$, we have $g \left( x \right) = \infty$ for every $x \in \mathbb{Z}$ and $x \in \mathbb{Z}^{2}$. 
Thus we define instead 
\begin{equation}\label{eq:g_n}
 g_{n} \left( x \right) 
 := \mathbb{E}_{\root0} \left[ \# \left\{ k \in \left\{0, 1, \dots, n \right\} : X_{k} = x \right\} \right],
\end{equation}
the expected number of visits to $x$ until time $n$ by simple random walk started at $\root0$. 
With these notions we are ready to define the potential kernel function we will use in $\mathbb{Z}^{d}$. 
\begin{definition}[Potential kernel function for $\mathbb{Z}^{d}$]\label{def:potential}
For $d \geq 1$, define the potential kernel function 
$a : \mathbb{Z}^{d} \to \mathbb{R}$ as 
\[
 a \left( x \right) := \lim_{n \to \infty} \left\{ g_{n} \left( \root0 \right) - g_{n} \left( x \right) \right\},
\]
where $g_{n}$ is defined as in~\eqref{eq:g_n}. 
\end{definition}
This definition ensures that $a \left( x \right)$ is finite for $d = 1$ and $d = 2$ as well:  
for $d = 1$ we have that $a \left( x \right) = \left| x \right|$, and for $d = 2$ see, e.g.,~\cite[Theorem~1.6.1]{Lawler91}. 
For $d \geq 3$ we simply have that 
$a \left( x \right) = g \left( \root0 \right) - g \left( x \right)$ and we can then write the energy of a probability measure $\mu$ with this potential kernel function as 
\[
 \mathcal{E}_{a} \left[ \mu \right] = g \left( \root0 \right) - \mathcal{E}_{g} \left[ \mu \right],
\]
where
\[
 \mathcal{E}_{g} \left[ \mu \right] 
 := \sum_{x,y \in \mathbb{Z}^{d}} g \left( x, y \right) \mu \left( x \right) \mu \left( y \right).
\]

By conditioning on the first step of the random walk one can check that the discrete Laplacians of the functions $g$ and $a$ satisfy 
$\Delta g \left( x \right) = \Delta a \left( x \right) = 0$ for $x \neq \root0$, 
while at the origin $\root0$ we have that 
$\Delta g \left( \root0 \right) = -1$ 
and 
$\Delta a \left( \root0 \right) = 1$. 

We will use the following estimates for the asymptotics of the Green's function on $\mathbb{Z}^{d}$ far from the origin; 
more precise estimates are known~\cite{Lawler91,FU96}, but are not required for our purposes. 
First, when $d = 2$ then there exists an absolute constant $C_{2}$ such that 
\begin{equation}\label{eq:green_estimates_d2}
\left| a \left( x \right) - \frac{2}{\pi} \ln \left\| x \right\|_{2} - \kappa \right| 
\leq 
\frac{C_{2}}{\left\| x \right\|_{2}^{2}} 
\end{equation}
for all $x \neq \root0$, where $\kappa$ is an explicit constant whose value is not relevant for our purposes~\cite{FU96}. 
Second, for every $d \geq 3$ there exists an absolute constant $C_{d}$ such that 
\begin{equation}\label{eq:green_estimates_d3}
\left| g \left( x \right) - a_{d} \left\| x \right\|_{2}^{2-d} \right| 
\leq 
\frac{C_{d}}{\left\| x \right\|_{2}^{d-1}} 
\end{equation}
for all $x \neq \root0$, where 
$a_{d} = (d/2) \Gamma \left( d/2 - 1 \right) \pi^{-d/2} = \tfrac{2}{\left( d- 2 \right) \omega_{d}}$, 
where $\omega_{d}$ is the volume of the $L_{2}$ unit ball in $\mathbb{R}^{d}$ (see~\cite[Theorem~1.5.4]{Lawler91}).

\subsection{Comparing the energy with the second moment} \label{sec:energy_second_mom} 

To obtain a lower bound on $N_{p} \left( \mathbb{Z}^{d}, B_{n}, \delta_{\root0} \right)$ we need to compare the second moment of a mass distribution with its energy, as defined in the previous subsection. This comparison is done in the following lemma. 

\begin{lemma}\label{lem:energy_second_mom} 
Let $\mu_{0}, \mu_{1}, \dots, \mu_{t}$ be a sequence of mass distributions on $\mathbb{Z}^{d}$ resulting from toppling moves 
and let $a$ be the potential kernel function defined in Definition~\ref{def:potential}. 
Then we have that 
\begin{equation}\label{eq:energy_second_mom}
t \left( \mathcal{E}_{a} \left[ \mu_{t} \right] - \mathcal{E}_{a} \left[ \mu_{0} \right] \right) 
\geq \left( M_{2} \left[ \mu_{t} \right] - M_{2} \left[ \mu_{0} \right] \right)^{2}.
\end{equation}
\end{lemma}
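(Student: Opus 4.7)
The plan is to establish a per-step identity for $\mathcal{E}_a$ analogous to $M_2[\mu_i] - M_2[\mu_{i-1}] = m_i$ (derived in the proof of Theorem~\ref{thm:greedy_ub_Zd}), and then combine the two via Cauchy--Schwarz. Writing the $i$-th toppling move as $\mu_i = \mu_{i-1} + m_i \Delta \delta_{v_i}$ with $0 < m_i \leq \mu_{i-1}(v_i)$, expanding bilinearly gives
\begin{equation*}
\mathcal{E}_a[\mu_i] - \mathcal{E}_a[\mu_{i-1}] = 2m_i \sum_{x,y} a(x-y) \mu_{i-1}(x) \Delta \delta_{v_i}(y) + m_i^2 \sum_{x,y} a(x-y) \Delta \delta_{v_i}(x) \Delta \delta_{v_i}(y).
\end{equation*}

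Both terms contain an inner sum of the form $\sum_y a(x-y) \Delta \delta_{v_i}(y)$. Applying summation by parts~\eqref{eq:summation_by_parts} to this inner sum (with $\delta_{v_i}$ as the finitely supported factor), together with the pointwise identity $\Delta a(x) = \mathbf{1}_{\{x=\rooto\}}$ noted just after Definition~\ref{def:potential}, collapses it to $\delta_{v_i}(x)$. Summing against the outer factor then reduces the cross term to $2m_i \mu_{i-1}(v_i)$ and the quadratic term to $\Delta \delta_{v_i}(v_i) = -1$, yielding the clean identity
\begin{equation*}
\mathcal{E}_a[\mu_i] - \mathcal{E}_a[\mu_{i-1}] = 2m_i \mu_{i-1}(v_i) - m_i^2.
\end{equation*}
The crucial structural input — that a toppling move cannot topple more mass than is present at the vertex — gives $m_i \leq \mu_{i-1}(v_i)$, so each per-step energy increment is at least $m_i^2$.

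Telescoping over $i = 1, \dots, t$ and applying Cauchy--Schwarz then completes the proof:
\begin{equation*}
\bigl(M_2[\mu_t] - M_2[\mu_0]\bigr)^{2} = \Bigl(\sum_{i=1}^{t} m_i\Bigr)^{2} \leq t \sum_{i=1}^{t} m_i^{2} \leq t\bigl(\mathcal{E}_a[\mu_t] - \mathcal{E}_a[\mu_0]\bigr).
\end{equation*}
The one thing I would want to check carefully is that the summation-by-parts manipulation is legitimate when $d \in \{1, 2\}$, where $a$ grows at infinity; this is harmless because $\mu_0 = \delta_\rooto$ has finite support and each toppling move enlarges the support by at most $2d$ points, so every double sum involved is in fact a finite sum. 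The real content of the lemma lies in the energy identity above, for which the elementary inequality $m_i \leq \mu_{i-1}(v_i)$ is the one nontrivial input.
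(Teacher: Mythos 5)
Your proof is correct and follows exactly the paper's argument: expand the energy difference bilinearly, use summation by parts together with $\Delta a = \delta_{\rooto}$ to get the per-step identity $\mathcal{E}_a[\mu_i] - \mathcal{E}_a[\mu_{i-1}] = 2m_i\mu_{i-1}(v_i) - m_i^2 \geq m_i^2$, and close with Cauchy--Schwarz against the telescoping second-moment increments. Your closing remark about finiteness of the double sums for $d\in\{1,2\}$ is a justified caution that the paper leaves implicit, and it is resolved correctly since $\delta_{v_i}$ (the finitely supported factor in the summation by parts) and the $\mu_i$ all have finite support.
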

\begin{proof}
For $i \in \left[ t \right]$ let $v_{i}$ denote the vertex that was toppled to get from $\mu_{i-1}$ to $\mu_{i}$, 
and let $m_{i}$ denote the mass that was toppled at this step. 
From Section~\ref{sec:Zd_ub} we know that 
$M_{2} \left[ \mu_{i} \right] - M_{2} \left[ \mu_{i-1} \right] = m_{i}$ 
for each $i \in \left[ t \right]$. 
Turning to the energy of the measure, we first recall from~\eqref{eq:mu_change} that 
$\mu_{i} = \mu_{i-1} + m_{i} \Delta \delta_{v_{i}}$ 
for every $i \in \left[ t \right]$. 
We can use this to write how the energy changes after each toppling move as follows: 
\begin{multline}
\mathcal{E}_{a} \left[ \mu_{i} \right] - \mathcal{E}_{a} \left[ \mu_{i-1} \right] 
= \sum_{x,y \in \mathbb{Z}^{d}} a \left( x - y \right) \left[ \mu_{i} \left( x \right) \mu_{i} \left( y \right) - \mu_{i-1} \left( x \right) \mu_{i-1} \left( y \right) \right] \\
\begin{aligned}
&= \sum_{x,y \in \mathbb{Z}^{d}} a \left( x - y \right) 
\left[ \left\{ \mu_{i-1} \left( x \right) + m_{i} \Delta \delta_{v_{i}} \left( x \right) \right\} \left\{ \mu_{i-1} \left( y \right) + m_{i} \Delta \delta_{v_{i}} \left( y \right) \right\} 
- \mu_{i-1} \left( x \right) \mu_{i-1} \left( y \right) \right] \\ 
&= m_{i} \sum_{x,y \in \mathbb{Z}^{d}} a \left( x - y \right) \Delta \delta_{v_{i}} \left( x \right) \mu_{i-1} \left( y \right)
+ m_{i} \sum_{x,y \in \mathbb{Z}^{d}} a \left( x - y \right) \Delta \delta_{v_{i}} \left( y \right) \mu_{i-1} \left( x \right) \\
&\quad + m_{i}^{2} \sum_{x,y \in \mathbb{Z}^{d}} a \left( x - y \right) \Delta \delta_{v_{i}} \left( x \right) \Delta \delta_{v_{i}} \left( y \right). \label{eq:three_terms}
\end{aligned}
\end{multline} 
We compute each term in the sum separately. 
Recall that 
$\Delta a \left( x \right) = \delta_{0} \left( x \right)$ 
and hence for every $y \in \mathbb{Z}^{d}$, 
$\left( \Delta a \left( \cdot - y \right) \right) \left( x \right) = \delta_{y} \left( x \right)$. 
Using summation by parts we have for every fixed $y \in \mathbb{Z}^{d}$ that 
\[
\sum_{x \in \mathbb{Z}^{d}} a \left( x - y \right) \cdot \Delta \delta_{v_{i}} \left( x \right) 
= \sum_{x \in \mathbb{Z}^{d}} \left( \Delta a \left( \cdot - y \right) \right) \left( x \right) \cdot \delta_{v_{i}} \left( x \right) 
= \delta_{v_{i}} \left( y \right). 
\]
For the first term in~\eqref{eq:three_terms} we thus have: 
\[
\sum_{x,y \in \mathbb{Z}^{d}} a \left( x - y \right) \Delta \delta_{v_{i}} \left( x \right) \mu_{i-1} \left( y \right) 
= \sum_{y \in \mathbb{Z}^{d}} \delta_{v_{i}} \left( y \right) \mu_{i-1} \left( y \right) = \mu_{i-1} \left( v_{i} \right). 
\]
Since $a \left( x - y \right) = a \left( y - x \right)$ 
we have that the second term in~\eqref{eq:three_terms} is equal to the first one. 
Finally we can compute the third term similarly: 
\[
\sum_{x,y \in \mathbb{Z}^{d}} a \left( x - y \right) \Delta \delta_{v_{i}} \left( x \right) \Delta \delta_{v_{i}} \left( y \right) 
= \sum_{y \in \mathbb{Z}^{d}} \delta_{v_{i}} \left( y \right) \Delta \delta_{v_{i}} \left( y \right) 
= \Delta \delta_{v_{i}} \left( v_{i} \right) = -1. 
\]
Putting together the previous two displays with~\eqref{eq:three_terms}, we can conclude that 
\[
\mathcal{E}_{a} \left[ \mu_{i} \right] - \mathcal{E}_{a} \left[ \mu_{i-1} \right] 
= 2 m_{i} \mu_{i-1} \left( v_{i} \right) - m_{i}^{2} 
\geq m_{i}^{2},
\]
where the last step follows because $m_{i} \leq \mu_{i-1} \left( v_{i} \right)$.

The claimed inequality~\eqref{eq:energy_second_mom} now follows by the Cauchy-Schwarz inequality: 
\[
\left( M_{2} \left[ \mu_{t} \right] - M_{2} \left[ \mu_{0} \right] \right)^{2} 
= \left( \sum_{i=1}^{t} m_{i} \right)^{2} 
\leq t \sum_{i=1}^{t} m_{i}^{2} 
\leq t \left( \mathcal{E}_{a} \left[ \mu_{t} \right] - \mathcal{E}_{a} \left[ \mu_{0} \right] \right). \qedhere
\]
\end{proof}

\subsection{An initial lower bound argument} \label{sec:initial_lb} 

A lower bound of the correct order in dimension $d=1$ now follows (see also~\cite{overhang} where this argument first appeared). 
Suppose that a sequence of $t$ toppling moves are applied to obtain mass distributions 
$\mu_{0} \equiv \delta_{\root0}, \mu_{1}, \mu_{2}, \dots, \mu_{t}$ 
that satisfy $\mu_{i} = T_{v_{i}}^{m_{i}} \mu_{i-1}$ for every $i \in \left[ t \right]$ and $\mu_{t} \left( B_{n} \right) \leq 1 - p$. 
We may assume that $\left| v_{i} \right| \leq n-1$ for every $i \leq t$; 
any other toppling move can be removed from the sequence to obtain a shorter sequence that still moves mass $p$ to distance $n$ from the origin.

Now recall that $a \left( x \right) = \left| x \right|$ when $d = 1$. Since 
$\mu_{t} \left( \left\{ v : \left| v \right| > n \right\} \right) = 0$, 
we have that 
$\mathcal{E}_{a} \left[ \mu_{t} \right] \leq 2n$. 
On the other hand, since 
$\mu_{t} \left( \left\{ v : \left| v \right| \geq n \right\} \right) \geq p$, 
we have that 
$M_{2} \left[ \mu_{t} \right] \geq p n^{2}$. 
By Lemma~\ref{lem:energy_second_mom} we thus have that 
$t \times 2n \geq \left( p n^{2} \right)^{2}$, 
implying that 
\[
N_{p} \left( \mathbb{Z}, B_{n}, \delta_{\root0} \right) \geq \frac{p^{2}}{2} n^{3},
\]
which matches the upper bound of Theorem~\ref{thm:greedy_ub_Zd} up to constant factors in $p$.

However, the same argument for $d \geq 2$ (using the estimates for the Green's function from~\eqref{eq:green_estimates_d2} and~\eqref{eq:green_estimates_d3}; we leave the details to the reader) only provides the following estimates: 
there exists a constant $C$ depending only on 
$d$ and 
$p$ such that 
\begin{equation*}
N_{p} \left( \mathbb{Z}^{2}, B_{n}, \delta_{\root0} \right) \geq \frac{C n^{4}}{\log \left( n \right)}, 
\end{equation*}
and 
\begin{equation*}
N_{p} \left( \mathbb{Z}^{d}, B_{n}, \delta_{\root0} \right) \geq C n^{4}, 
\end{equation*}
for $d \geq 3$. 
Therefore, to obtain a tight lower bound in dimensions $d \geq 2$, a new idea is needed. The idea, presented in the following section, is to perform an initial smoothing of the mass distribution.

\section{Smoothing and the lower bound on $\mathbb{Z}^{d}$} \label{sec:smoothing_and_lb} 

The previous section provides the basis for the proof of Theorem~\ref{thm:zd}, but applying the arguments directly leads to a suboptimal lower bound, as described in Section~\ref{sec:initial_lb}. 
The remedy is to perform an initial smoothing of the mass distribution. 
In this section we first describe the smoothing operation in general in Section~\ref{sec:smoothing}, followed by describing the specifics of smoothing in $\mathbb{Z}^{d}$ in Section~\ref{sec:smoothing_zd}. 
We conclude with the proof of the lower bound on $\mathbb{Z}^{d}$ in Section~\ref{sec:Zd_lb}.

\subsection{Smoothing of distributions} \label{sec:smoothing} 

For the proofs of the lower bounds on most families of graphs investigated in this paper we use a certain smoothing of the mass distribution. 
That is, we first perform some toppling moves to obtain a mass distribution $\widetilde{\mu}$ that is ``smooth'' in the sense that it is approximately uniform over a subset of the ball $B_{n}$. 
In this subsection we show that it is valid to use smoothing for lower bound arguments, since the minimum number of toppling moves necessary to transport mass $p$ outside of a set $A$ cannot increase by smoothing. 
What then remains to be estimated (for each family of graphs separately) 
is the minimum number of toppling moves necessary to transport mass $p$ to distance $n$ started from the smooth distribution $\widetilde{\mu}$. 

\begin{lemma}[Smoothing weakly reduces the minimum number of toppling moves]\label{lem:smoothing}
Start with mass distribution $\mu$ on a graph $G$, 
and let $A \subseteq V \left( G \right)$. 
Suppose that toppling mass $m$ at vertex $v \in A$ is a valid toppling move. 
We then have that 
\begin{equation}\label{eq:smoothing}
N_{p} \left( G, A, T_{v}^{m} \mu \right) \leq N_{p} \left( G, A, \mu \right).
\end{equation}
\end{lemma}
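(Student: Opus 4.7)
The plan is to induct on $t := N_{p}(G, A, \mu)$. For the base case $t = 0$, we have $\sum_{u \notin A} \mu(u) \geq p$; because $v \in A$, the move $T_{v}^{m}$ only subtracts mass at the vertex $v \in A$ and redistributes it to neighbors, so it cannot decrease the mass already residing outside of $A$. Hence $\sum_{u \notin A} (T_{v}^{m} \mu)(u) \geq \sum_{u \notin A} \mu(u) \geq p$, giving $N_{p}(G, A, T_{v}^{m} \mu) = 0$.

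For the inductive step, fix an optimal sequence $(T_{v_{i}}^{m_{i}})_{i=1}^{t}$ from $\mu$ with intermediate distributions $\mu_{0} = \mu$, $\mu_{i} = T_{v_{i}}^{m_{i}} \mu_{i-1}$, so that $\sum_{u \notin A} \mu_{t}(u) \geq p$. Let $i^{*} = \min\{ i \leq t : v_{i} = v \}$, with the convention $i^{*} = \infty$ if no such index exists. The key structural fact, visible from~\eqref{eq:toppling_move}, is that the toppling operators $T_{w}^{\ell}$ commute as affine maps on measures; the only constraint is that each move be \emph{valid}, i.e., have enough mass at the toppled vertex. If $i^{*} = \infty$, apply the whole sequence $(T_{v_{i}}^{m_{i}})_{i=1}^{t}$ from $T_{v}^{m} \mu$: since $v_{i} \neq v$, we have $(T_{v}^{m} \mu_{i-1})(v_{i}) \geq \mu_{i-1}(v_{i}) \geq m_{i}$ inductively, so every move is valid, and the resulting distribution is $T_{v}^{m} \mu_{t}$, which still has mass $\geq p$ outside $A$ by the base-case argument.

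If instead $i^{*} \leq t$, perform the first $i^{*} - 1$ moves (valid as above) to reach $T_{v}^{m} \mu_{i^{*} - 1}$. In the subcase $m_{i^{*}} \geq m$, topple the remainder $m_{i^{*}} - m$ at $v$ — valid since $(T_{v}^{m} \mu_{i^{*}-1})(v) = \mu_{i^{*}-1}(v) - m \geq m_{i^{*}} - m$ — thereby recovering $\mu_{i^{*}}$, and then execute the remaining $t - i^{*}$ moves of the optimal sequence, using $t$ moves in total. In the subcase $m_{i^{*}} < m$, rewrite $T_{v}^{m} \mu_{i^{*}-1} = T_{v}^{m - m_{i^{*}}} \mu_{i^{*}}$; this is a valid smoothing of $\mu_{i^{*}}$ because $v_{j} \neq v$ for $j < i^{*}$ forces $\mu_{i^{*}-1}(v) \geq \mu(v) \geq m$, so $\mu_{i^{*}}(v) = \mu_{i^{*}-1}(v) - m_{i^{*}} \geq m - m_{i^{*}}$. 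Since $N_{p}(G, A, \mu_{i^{*}}) \leq t - i^{*} < t$, the inductive hypothesis applies, giving $N_{p}(G, A, T_{v}^{m} \mu_{i^{*}-1}) \leq t - i^{*}$, and the total number of moves is at most $(i^{*} - 1) + (t - i^{*}) = t - 1 \leq t$.

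The main bookkeeping obstacle is checking the validity of every toppling move after the smoothing has been injected at the front; the crucial observation that unlocks everything is that moves $T_{v_{j}}^{m_{j}}$ with $v_{j} \neq v$ never deplete mass at $v$ (they can only add to it when $v_{j} \sim v$), so the mass at $v$ available for a deferred smoothing at index $i^{*}$ is at least the original $\mu(v) \geq m$. Given this, the commutativity of the toppling operators and the fact that $v \in A$ (so mass only flows outward under the smoothing) deliver the inequality.
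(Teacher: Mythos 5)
Your proof is correct and follows essentially the same strategy as the paper: induction on $t = N_p(G, A, \mu)$, exploiting commutativity of toppling operators together with a case split on how the smoothing mass $m$ at $v$ interacts with a later toppling at $v$ (merge versus defer). The paper peels off only the first move $v_1$ and branches on whether $v_1 = v$, whereas you jump directly to the first index $i^*$ with $v_{i^*} = v$ (or to the end if none exists); this is a minor restructuring of the same induction.
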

\begin{proof}
We prove the statement by induction on $t := N_{p} \left( G, A, \mu \right)$. 
For the base case of $t = 0$, 
if $N_{p} \left( G, A, \mu \right) = 0$, then $\mu \left( A \right) \leq 1 - p$. 
Since $v \in A$, no mass can enter $A$ from outside of $A$ in the toppling move, 
so $T_{v}^{m} \mu \left( A \right) \leq 1 - p$ and hence 
$N_{p} \left( G, A, T_{v}^{m} \mu \right) = 0$. 

For the induction step, 
let $t = N_{p} \left( G, A, \mu \right)$ and 
let $\mu \equiv \mu_{0}, \mu_{1}, \dots, \mu_{t}$ be a series of mass distributions 
such that $\mu_{i}$ is obtained from $\mu_{i-1}$ by a toppling move at vertex $v_{i}$ 
with mass $m_{i}$ being toppled, 
i.e., $\mu_{i} = T_{v_{i}}^{m_{i}} \mu_{i-1}$, 
and such that $\mu_{t} \left( A \right) \leq 1 - p$. 
Due to the optimality of the sequence of toppling moves we have that $N_{p} \left( G, A, \mu_{1} \right) = t - 1$.

Consider first the case that $v \neq v_{1}$. 
In this case the toppling moves $T_{v}^{m}$ and $T_{v_{1}}^{m_{1}}$ commute, i.e., 
$T_{v_{1}}^{m_{1}} T_{v}^{m} \mu = T_{v}^{m} T_{v_{1}}^{m_{1}} \mu$. 
Hence 
\[
N_{p} \left( G, A, T_{v}^{m} \mu \right) 
\leq N_{p} \left( G, A, T_{v_{1}}^{m_{1}} T_{v}^{m} \mu \right) + 1 
= N_{p} \left( G, A, T_{v}^{m} \mu_{1} \right) + 1 
\leq N_{p} \left( G, A, \mu_{1} \right) + 1 = t,
\]
where the second inequality is due to the induction hypothesis. 

Now consider the case that $v = v_{1}$. If $m_{1} > m$, then 
\[
N_{p} \left( G, A, T_{v}^{m} \mu \right) 
\leq N_{p} \left( G, A, T_{v_{1}}^{m_{1} - m} T_{v}^{m} \mu \right) + 1 
= N_{p} \left( G, A, \mu_{1} \right) + 1 
= t.
\]
If $m \geq m_{1}$, then 
\[
N_{p} \left( G, A, T_{v}^{m} \mu \right) 
= N_{p} \left( G, A, T_{v}^{m - m_{1}} \mu_{1} \right) 
\leq N_{p} \left( G, A, \mu_{1} \right) 
= t - 1,
\]
where the inequality is again due to the induction hypothesis. 
\end{proof}

Iterating this lemma we immediately obtain the following corollary. 
\begin{corollary}\label{cor:smoothing}
Let $\mu_{0}, \mu_{1}, \dots, \mu_{t}$ be a sequence of mass distributions on a graph $G$ 
such that for every $i \in \left[ t \right]$, the mass distribution $\mu_{i}$ is obtained from $\mu_{i-1}$ by applying a toppling move at vertex $v_{i} \in V \left( G \right)$, toppling a mass $m_{i}$, 
i.e., $\mu_{i} = T_{v_{i}}^{m_{i}} \mu_{i-1}$. 
Let $A \subseteq V \left( G \right)$ and assume that $v_{i} \in A$ for every $i \in \left[ t \right]$. 
Then we have that 
\[
N_{p} \left( G, A, \mu_{t} \right) \leq N_{p} \left( G, A, \mu_{0} \right).
\]
\end{corollary}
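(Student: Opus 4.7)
The plan is a straightforward induction on the length $t$ of the toppling sequence, using Lemma~\ref{lem:smoothing} as the one-step update. The base case $t = 0$ is trivial, since then $\mu_{t} = \mu_{0}$ and the claimed inequality is an equality.

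For the inductive step, assume the result for sequences of length $t - 1$ and consider a sequence $\mu_{0}, \mu_{1}, \dots, \mu_{t}$ of the type described, with $v_{i} \in A$ for every $i \in [t]$. I would apply Lemma~\ref{lem:smoothing} to the single toppling move that produces $\mu_{1}$ from $\mu_{0}$: since $v_{1} \in A$ and $T_{v_{1}}^{m_{1}} \mu_{0} = \mu_{1}$ is a valid toppling, the lemma gives
\[
N_{p}\left( G, A, \mu_{1} \right) \leq N_{p}\left( G, A, \mu_{0} \right).
\]
On the other hand, the truncated sequence $\mu_{1}, \mu_{2}, \dots, \mu_{t}$ has length $t - 1$ and still satisfies the hypothesis of the corollary, namely that each successive distribution is obtained from the previous one by toppling at a vertex $v_{i} \in A$. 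Applying the induction hypothesis to this shorter sequence yields
\[
N_{p}\left( G, A, \mu_{t} \right) \leq N_{p}\left( G, A, \mu_{1} \right).
\]
Chaining the two inequalities completes the induction.

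There is no real obstacle here; the corollary is essentially just the statement that applying Lemma~\ref{lem:smoothing} repeatedly along the sequence preserves the direction of the inequality. The only thing worth flagging is that the hypothesis $v_{i} \in A$ is used at each step precisely to ensure that the relevant toppling does not move mass into $A$ from outside, which is what Lemma~\ref{lem:smoothing} requires; this is exactly why restricting to the subsequence starting at $\mu_{1}$ is legitimate.
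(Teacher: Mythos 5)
Your proof is correct and matches the paper's intent exactly: the paper simply states ``Iterating this lemma we immediately obtain the following corollary,'' and your induction on $t$, applying Lemma~\ref{lem:smoothing} once and then the induction hypothesis to the truncated sequence $\mu_{1},\dots,\mu_{t}$, is precisely that iteration made explicit.
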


Another corollary of the lemma above is that we can assume without loss of generality that at every move we topple \emph{all} the mass at a given vertex. 
Given a graph $G = (V, E)$, a subset of the vertices $A \subset V$, mass $p > 0$, and an initial mass distribution $\mu_{0}$, 
we define 
$N_{p}^{\mathrm{full}} \left( G, A, \mu_{0} \right)$ 
to be the minimum number of toppling moves needed to move mass $p$ outside of the set $A$, 
where at every toppling move we have to topple all the mass at a given vertex. 
\begin{corollary}\label{cor:full_toppling}
We have that 
$N_{p} \left( G, A, \mu_{0} \right) = N_{p}^{\mathrm{full}} \left( G, A, \mu_{0} \right)$. 
\end{corollary}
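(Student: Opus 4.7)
The inequality $N_p(G,A,\mu_0)\le N_p^{\mathrm{full}}(G,A,\mu_0)$ is immediate, since every full toppling is a valid toppling move. My plan is to establish the reverse inequality by strong induction on $t := N_p(G,A,\mu_0)$, with the base case $t=0$ being trivial: both quantities vanish precisely when $\mu_0(V\setminus A)\ge p$.

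For the inductive step I fix an optimal sequence $\mu_0,\mu_1,\dots,\mu_t$ with $\mu_i=T_{v_i}^{m_i}\mu_{i-1}$ and replace its first (possibly partial) move by a full toppling at the same vertex, obtaining $\tilde\mu := T_{v_1}^{\mu_0(v_1)}\mu_0$. Using \eqref{eq:toppling_move}, both $\tilde\mu$ and $T_{v_1}^{\mu_0(v_1)-m_1}\mu_1$ equal $\mu_0+\mu_0(v_1)\,\Delta\delta_{v_1}$, and the residual toppling $T_{v_1}^{\mu_0(v_1)-m_1}$ is valid at $\mu_1$ because $\mu_1(v_1)=\mu_0(v_1)-m_1\ge 0$. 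Assuming $v_1\in A$, Lemma~\ref{lem:smoothing} applied to this residual toppling yields $N_p(G,A,\tilde\mu)\le N_p(G,A,\mu_1)=t-1$. The strong inductive hypothesis then produces a full-toppling sequence of length at most $t-1$ from $\tilde\mu$; prepending the single full toppling $T_{v_1}^{\mu_0(v_1)}$ gives a full-toppling sequence of length at most $t$ from $\mu_0$, so $N_p^{\mathrm{full}}(G,A,\mu_0)\le t$.

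The main obstacle is justifying the assumption $v_1\in A$, under which Lemma~\ref{lem:smoothing} applies. If $v_1\notin A$, the toppling at $v_1$ cannot decrease $\mu(A)$: mass leaving $v_1$ either remains outside $A$ or, worse, enters $A$ through a neighbor of $v_1$ in $A$. Such a move is therefore never productive toward the goal $\mu_t(A)\le 1-p$, and I would first prove the auxiliary fact that one can always choose an optimal sequence in which every toppled vertex $v_i$ lies in $A$. To do this I would exploit the formal commutativity $T_v^m T_u^{m'}\mu = \mu+m\,\Delta\delta_v+m'\,\Delta\delta_u = T_u^{m'}T_v^m\mu$ of distinct toppling operators to push any toppling at a vertex outside $A$ past the first toppling inside $A$, reducing toppled masses downward if necessary to preserve validity in the case where $v_1$ is adjacent to the later vertex. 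Once this reduction is in hand, the induction above runs cleanly.
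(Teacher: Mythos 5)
Your argument follows the paper's proof essentially step for step: induct on $t=N_p(G,A,\mu_0)$, replace the first (possibly partial) toppling by the full toppling at the same vertex, observe that the resulting $\tilde\mu$ differs from $\mu_1$ by the residual toppling $T_{v_1}^{\mu_0(v_1)-m_1}$, apply Lemma~\ref{lem:smoothing} to conclude $N_p(G,A,\tilde\mu)\le t-1$, then invoke the inductive hypothesis and prepend one full toppling. Your added care in flagging the hypothesis $v_1\in A$ is warranted: Lemma~\ref{lem:smoothing} genuinely requires the toppled vertex to lie in $A$ (its conclusion is false otherwise), and the paper's proof invokes it for $v_1$ without verifying this, so you have spotted a real, if small, gap in the exposition.

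That said, the mechanism you sketch for closing the gap is not quite right. The identity $T_v^m T_u^{m'}=T_u^{m'}T_v^m$ is only formal; reordering breaks validity exactly when the vertices are adjacent, and your patch of ``reducing toppled masses downward'' changes the intermediate and final distributions, so the commuted sequence no longer reproduces $\mu_t$. Moreover, pushing a toppling outside $A$ past the first toppling inside $A$ only relocates it within the sequence rather than eliminating it, so on its own this does not yield an optimal sequence confined to $A$. A cleaner route is a deletion-with-capping dominance argument: delete the move at $v_1\notin A$, and for each $i\ge 2$ topple at $v_i$ the minimum of $m_i$ and the mass actually available there. A straightforward induction shows that after each subsequent step the mass at every vertex of $A$ is at most what it was in the original sequence, so the shortened sequence of $t-1$ moves still leaves mass at most $1-p$ in $A$, contradicting optimality. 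Hence every optimal sequence topples only inside $A$, which is precisely the fact that both your proof and the paper's silently use.
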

\begin{proof}
Since allowing only full topplings is more restrictive than allowing partial topplings, we have that 
$N_{p} \left( G, A, \mu_{0} \right) \leq N_{p}^{\mathrm{full}} \left( G, A, \mu_{0} \right)$. 
We prove the other inequality, i.e., that 
$N_{p} \left( G, A, \mu_{0} \right) \geq N_{p}^{\mathrm{full}} \left( G, A, \mu_{0} \right)$, 
by induction on $t := N_{p} \left( G, A, \mu_{0} \right)$. 
For the base of $t = 0$, 
if $N_{p} \left( G, A, \mu_{0} \right) = 0$, then $\mu_{0} \left( A \right) \leq 1 - p$, 
and hence 
$N_{p}^{\mathrm{full}} \left( G, A, \mu_{0} \right) = 0$.

For the induction step, 
let $t = N_{p} \left( G, A, \mu_{0} \right)$ and 
let $\mu_{0}, \mu_{1}, \dots, \mu_{t}$ be a series of mass distributions 
such that $\mu_{i}$ is obtained from $\mu_{i-1}$ by a toppling move at vertex $v_{i}$ 
with mass $m_{i}$ being toppled, 
i.e., $\mu_{i} = T_{v_{i}}^{m_{i}} \mu_{i-1}$, 
and such that $\mu_{t} \left( A \right) \leq 1 - p$. 
Due to the optimality of the sequence of toppling moves we have that $N_{p} \left( G, A, \mu_{1} \right) = t - 1$. 
Define the mass distribution
$\mu_{1}' = T_{v_{1}}^{\mu_{0} \left( v_{1} \right)} \mu_{0}$,  
which corresponds to toppling all the original mass at $v_{1}$, 
and note that 
$\mu_{1}' = T_{v_{1}}^{\mu_{0} \left( v_{1} \right) - m_{1}} \mu_{1}$. 
By Lemma~\ref{lem:smoothing} we have that 
$N_{p} \left( G, A, \mu_{1}' \right) \leq N_{p} \left( G, A, \mu_{1} \right)$ 
and by the induction hypothesis we have that 
$N_{p} \left( G, A, \mu_{1}' \right) = N_{p}^{\mathrm{full}} \left( G, A, \mu_{1}' \right)$. 
Therefore we obtain that 
\[
N_{p}^{\mathrm{full}} \left( G, A, \mu_{0} \right) 
\leq 
N_{p}^{\mathrm{full}} \left( G, A, \mu_{1}' \right) + 1
= 
N_{p} \left( G, A, \mu_{1}' \right) + 1 
\leq t. \qedhere 
\]
\end{proof}

\subsection{Smoothing in $\mathbb{Z}^{d}$} \label{sec:smoothing_zd} 

For the initial smoothing in $\mathbb{Z}^{d}$ we leverage connections between our controlled diffusion setting and the \emph{divisible sandpile model}, and use results by Levine and Peres~\cite{lp} on this model. 
In the divisible sandpile each site $x \in \mathbb{Z}^{d}$ starts with mass $\nu_{0} \left( x \right) \in \mathbb{R}_{\geq 0}$. 
A site $x$ is \emph{full} if its mass is at least $1$. 
A \emph{divisible sandpile move} at $x$, denoted by $\mathcal{D}_{x}$, consists of no action if $x$ is not full, 
and consists of keeping mass $1$ at $x$ and splitting any excess mass equally among its neighbors if $x$ is full. 

Recall from~\eqref{eq:toppling_move} that the mass distribution after a toppling move can be written as $T_{v}^{m} \mu = \mu + m \Delta \delta_{v}$. 
Similarly, for a mass distribution $\mu$ and a site $x \in \mathbb{Z}^{d}$, 
the mass distribution after a divisible sandpile move at $x$ can be written as 
\begin{equation}\label{eq:mu_change_ds}
\mathcal{D}_{x} \mu = \mu + \max \left\{ \mu \left( x \right) - 1, 0 \right\} \Delta \delta_{x}.
\end{equation}

Note that individual divisible sandpile moves do not commute; however, the divisible sandpile is ``abelian'' in the following sense. 
\begin{proposition}[Levine and Peres~\cite{lp}]\label{prop:ds_abelian}
Let $x_{1}, x_{2}, \dots \in \mathbb{Z}^{d}$ be a sequence with the property that for any $x \in \mathbb{Z}^{d}$ there are infinitely many terms $x_{k} = x$. 
Let $\nu_{0}$ denote the initial mass distribution and assume that $\nu_{0}$ has finite support. 
Let 
\begin{align*}
u_{k} \left( x \right) &= \text{ \emph{total mass emitted by} } x \text{ \emph{after divisible sandpile moves} } x_{1}, \dots, x_{k}; \\
\nu_{k} \left( x \right) &= \text{ \emph{amount of mass present at} } x \text{ \emph{after divisible sandpile moves} } x_{1}, \dots, x_{k}. 
\end{align*}
Then $u_{k} \uparrow u$ and $\nu_{k} \to \nu \leq 1$. 
Moreover, the limits $u$ and $\nu$ are independent of the sequence $\left\{ x_{k} \right\}$. 
\end{proposition}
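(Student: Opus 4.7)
The plan is to prove this via a least action (abelian) principle that is standard in sandpile theory. First, monotonicity is immediate from the definition: a divisible sandpile move at $x_k$ only adds nonnegative mass to $u_{k}(x_k)$ and leaves $u_k(y)$ unchanged for $y \neq x_k$, so $u_k \uparrow u$ pointwise for some $u : \mathbb{Z}^d \to [0, \infty]$. Iterating \eqref{eq:mu_change_ds} and using linearity of $\Delta$, one obtains the bookkeeping identity
\[
\nu_k(x) = \nu_0(x) + \Delta u_k(x),
\]
so to understand $\nu_k$ it suffices to control $u_k$.

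Next I would prove the \emph{least action principle}: if $w : \mathbb{Z}^d \to [0, \infty)$ satisfies $\nu_0 + \Delta w \leq 1$ everywhere, then $u_k \leq w$ for all $k$. The proof is by induction on $k$. Suppose $u_{k-1} \leq w$. The only site at which $u_k$ differs from $u_{k-1}$ is $x_k$, and there the emission is $e_k = \max\{\nu_{k-1}(x_k) - 1, 0\}$. If $e_k = 0$ the claim is trivial. Otherwise $e_k = \nu_0(x_k) + \Delta u_{k-1}(x_k) - 1$, so
\[
u_k(x_k) = u_{k-1}(x_k) + e_k = \frac{1}{2d} \sum_{y \sim x_k} u_{k-1}(y) + \nu_0(x_k) - 1 \leq \frac{1}{2d} \sum_{y \sim x_k} w(y) + \nu_0(x_k) - 1 = w(x_k) + \Delta w(x_k) + \nu_0(x_k) - 1 \leq w(x_k),
\]
by the induction hypothesis and the subsolution inequality for $w$.

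To deduce $u < \infty$, I would exhibit a finite nonnegative subsolution. For $d \geq 3$, take $w := g \ast \nu_0$, which is well defined since $\nu_0$ has finite support and $g$ is integrable on $\mathbb{Z}^d$; then $\Delta w = -\nu_0$, so $\nu_0 + \Delta w \equiv 0 \leq 1$, while $w \geq 0$ since $g \geq 0$. For $d = 1,2$ the Green's function diverges, and the main obstacle is producing a nonnegative subsolution with the required estimates; here I would use the compact support of $\nu_0$ together with the potential kernel $a$, adjusting by an additive constant on a large ball and capping outside, so that $\nu_0 + \Delta w \leq 1$ globally while $w \geq 0$. Once $u \leq w < \infty$ pointwise, the identity $\nu_k = \nu_0 + \Delta u_k$ and monotone convergence of $u_k$ give $\nu_k \to \nu := \nu_0 + \Delta u$.

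Finally, abelianness follows by symmetry. Because every vertex $x$ appears infinitely often in $\{x_k\}$, whenever $x_k = x$ we have $\nu_k(x) \leq 1$; since $\nu_k(x) \to \nu(x)$ along the full sequence, the limit satisfies $\nu \leq 1$, so $u$ is itself a subsolution. If $u^{(1)}, u^{(2)}$ are the limit odometers arising from two sequences visiting every vertex infinitely often, then the least action principle applied with $w = u^{(2)}$ gives $u^{(1)} \leq u^{(2)}$, and symmetrically $u^{(2)} \leq u^{(1)}$, hence $u^{(1)} = u^{(2)}$ and consequently $\nu^{(1)} = \nu^{(2)}$. As indicated, the step that requires the most care is constructing the global subsolution in low dimensions; everywhere else the argument is a clean induction plus a pointwise convergence argument.
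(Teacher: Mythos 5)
The paper does not actually prove this proposition; it is stated as a citation to Levine and Peres~\cite{lp}, so there is no in-paper argument to compare against.

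Your proof via the least action principle is the standard abelian-sandpile argument and is essentially correct: monotonicity of the odometer, the bookkeeping identity $\nu_k = \nu_0 + \Delta u_k$, the induction showing $u_k \leq w$ for any nonnegative $w$ satisfying $\nu_0 + \Delta w \leq 1$, and the observation that since every vertex is visited infinitely often and toppling leaves at most unit mass behind, the limit satisfies $\nu \leq 1$, so $u$ is itself a subsolution and any two limit odometers dominate each other. The one step that is not quite watertight is the construction of a finite nonnegative subsolution in dimensions $d = 1, 2$. The ``capping outside'' device is murky: truncating $w$ at a level $M$ can easily violate $\Delta w \leq 1 - \nu_0$ at the boundary of the truncation region, since you need an upper bound on $\Delta w$, not superharmonicity. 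A cleaner construction, valid uniformly in every $d \geq 1$, is
\[
w(x) := \|x\|_2^2 - \sum_{y} a(x-y)\,\nu_0(y) + C,
\]
with $a$ the potential kernel of Definition~\ref{def:potential} and $C$ a large constant. Since $\Delta \|x\|_2^2 \equiv 1$ and $\Delta a = \delta_0$, one has $\Delta w = 1 - \nu_0$ exactly. Because $\nu_0$ has finite total mass $m$ and compact support, the convolution term grows like $m\, a(x)$, which is $O(\|x\|_2)$ in $d=1$ and $O(\log\|x\|_2)$ in $d=2$, hence dominated by $\|x\|_2^2$; so $w \to +\infty$ at infinity, is bounded below, and a suitable $C$ makes $w \geq 0$. (For $d\geq 3$ your $g * \nu_0$ also works, but the above is uniform.) With this subsolution supplied, your argument is complete.
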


The limit $\nu$ represents the final mass distribution and sites $x \in \mathbb{Z}^{d}$ with $\nu \left( x \right) = 1$ are called \emph{fully occupied}. 
We are interested primarily in the case when the initial mass distribution is a point mass at the origin: $\nu_{\root0} = m \delta_{\root0}$ for some $m > 0$. 
The natural question then is to identify the shape of the resulting domain $D_{m}$ of fully occupied sites. 
The following result states that $D_{m}$ is very close to a Euclidean ball. 
Since in this paper the notation $B_{n}$ is reserved for the $L_{1}$ ball (and the graph distance ball more generally), 
we denote by $B_{r}^{(2)} = \left\{ x \in \mathbb{Z}^{d} : \left\| x \right\|_{2} < r \right\}$ the (open) $L_{2}$ ball around the origin. 

\begin{theorem}[Levine and Peres~\cite{lp}]\label{thm:ds_limit}
For $m \geq 0$ let $D_{m} \subset \mathbb{Z}^{d}$ be the domain of fully occupied sites for the divisible sandpile formed from a pile of mass $m$ at the origin. 
There exist constants $c$ and $c'$ depending only on $d$ such that 
\[
B_{r - c}^{(2)} \subset D_{m} \subset B_{r + c'}^{(2)},
\]
where $r = \left( m / \omega_{d} \right)^{1/d}$ and $\omega_{d}$ is the volume of the $L_{2}$ unit ball in $\mathbb{R}^{d}$.
\end{theorem}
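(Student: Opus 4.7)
The plan is to analyze the \emph{odometer function} $u(x)$ defined as the total mass emitted from $x$ during the divisible sandpile process started with mass $m \delta_{o}$ at the origin. By Proposition~\ref{prop:ds_abelian} the odometer is well-defined, and the final mass distribution satisfies $\nu = m \delta_{o} + \Delta u$. Since no site can end with mass above $1$, and no site ever topples without first being full, one checks that $u$ is the pointwise minimum among nonnegative functions $f$ satisfying $m \delta_{o} + \Delta f \leq 1$ --- a discrete obstacle problem. At interior points of the contact set $\{u > 0\}$ the constraint is active, yielding $\nu(x) = 1$, i.e., $x \in D_m$. So it suffices to sandwich $\{u > 0\}$ between the two Euclidean balls.

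The core idea is to compare $u$ against the explicit candidate
\[
\gamma(x) := \|x\|_2^{2} - m\, a(x) + \kappa_d,
\]
where $a$ is the potential kernel from Definition~\ref{def:potential} and $\kappa_d$ is a constant chosen so that the zero set of $\gamma$ is approximately the sphere of radius $r = (m/\omega_d)^{1/d}$. Using $\Delta \|x\|_2^{2} = 1$ and $\Delta a = \delta_{o}$, one has $\Delta \gamma = 1 - m\delta_{o}$, matching the equation satisfied by $u$ on the interior of the contact set. Inserting the sharp asymptotics~\eqref{eq:green_estimates_d2}--\eqref{eq:green_estimates_d3} and optimizing $\kappa_d$ then shows that $\gamma > 0$ on $B_{r - c}^{(2)}$ and $\gamma < 0$ outside $B_{r + c'}^{(2)}$, for constants $c, c'$ depending only on $d$.

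The inner inclusion $B_{r - c}^{(2)} \subset D_m$ follows from the minimality characterization of $u$: if some $x \in B_{r - c}^{(2)}$ had $u(x) = 0$, then replacing $u$ by $u \wedge g$ for an appropriate truncation of $\gamma^{+}$ would produce a strictly smaller admissible competitor, contradicting minimality. Hence $B_{r - c}^{(2)} \subset \{u > 0\}$, and because interior points of the contact set are fully occupied, the inner inclusion for $D_m$ follows after absorbing a one-layer boundary correction into $c$. The outer inclusion $D_m \subset B_{r + c'}^{(2)}$ follows by a super-solution argument: $\Delta(u - \gamma) \geq 0$ on $\{u > 0\} \setminus \{o\}$, so $u - \gamma$ attains its maximum on the boundary $\{u = 0\}$, where it equals $-\gamma$; since $\gamma(x) < 0$ outside $B_{r + c'}^{(2)}$, this forces $u$ to vanish there.

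The main obstacle is the boundary-layer analysis: proving that the deviation from a perfect Euclidean ball is bounded by a constant independent of $m$, rather than by a term growing with $r$. This requires the sharp pointwise estimates~\eqref{eq:green_estimates_d2}--\eqref{eq:green_estimates_d3}, together with a careful choice of $\kappa_d$ calibrated to the Wiener test on the boundary, in order to control the error in $\gamma$ uniformly across a boundary shell of $O(1)$ thickness. The logarithmic correction in dimension $d = 2$ and the different decay rates of $a$ in higher dimensions must be handled separately, but both reduce to the same minimality-plus-barrier template outlined above.
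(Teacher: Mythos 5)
The paper does not prove this theorem --- it is imported verbatim from Levine--Peres \cite{lp} --- so the comparison is against their proof, which is indeed built on the discrete obstacle problem / least-superharmonic-majorant characterization you describe. Your high-level framework is therefore the right one, but several of the details as written do not work.

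The most serious problem is that the minimality of the odometer cannot give the inner inclusion. With the paper's convention $\Delta f(x)=\frac{1}{2d}\sum_{y\sim x}f(y)-f(x)$, the set of admissible functions $\{f\ge 0,\ m\delta_{\rooto}+\Delta f\le 1\}$ is closed under pointwise minimum, and $u$ is its pointwise least element. Hence for \emph{any} admissible competitor $g$ one gets $u\le g$, which is an \emph{upper} bound on $u$: if $g$ vanishes outside $B^{(2)}_{r+c'}$ then so does $u$, and this is the \emph{outer} inclusion. No construction of competitors can produce a lower bound on $u$ from minimality, so the argument you sketch for $B^{(2)}_{r-c}\subset D_m$ (``replacing $u$ by $u\wedge g$ \dots contradicting minimality'') is circular: it shows only $u\le g$, which you already know. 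The inner bound in Levine--Peres is a genuinely separate argument, comparing $u-\gamma$ against barriers centred at the test point $y$ rather than at the origin, and this ingredient is missing from your sketch. Relatedly, your outer-inclusion paragraph has a sign problem: $\Delta(u-\gamma)=\nu-1\le 0$, so $u-\gamma$ is superharmonic and obeys a \emph{minimum} principle, not a maximum principle as you state. Finally, the truncation $\gamma^{+}$ is not itself admissible --- the cut at $\{\gamma=0\}$ makes $\Delta\gamma^{+}$ exceed $1$ near that set --- so the admissible competitor for the outer bound must be built more carefully; this is exactly where the $O(1)$ boundary-layer control that you flag as ``the main obstacle'' actually lives, and the sketch does not close it. Two smaller points: your $\kappa_d$ must be allowed to depend on $m$ (equivalently on $r$), not only on $d$, for the zero set of $\gamma$ to sit near $\|x\|_2=r$; and $\{u>0\}$ is the non-contact set of the obstacle problem, not the contact set as you label it.
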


We note that the sequence of divisible sandpile moves started from a pile of mass $m$ at the origin could potentially be infinite. 
However, there exists a finite $K$ such that $\nu_{K} \left( x \right) \leq 1 + \eps$ for every $x \in \mathbb{Z}^{d}$ and for some small $\eps > 0$. 
This is useful for proving the following corollary of the theorem above.

\begin{corollary}\label{cor:divisible_sandpile}
For every $c \in \left( 0, 1 \right)$ 
there exists a finite sequence of toppling moves 
that takes the mass distribution $\delta_{\root0}$ on $\mathbb{Z}^{d}$ 
to a mass distribution $\mu$ on $\mathbb{Z}^{d}$ 
for which the following two properties hold:
\begin{align}
\forall x \in B_{cn}^{(2)} \, : &\quad \mu \left( x \right) \leq \frac{2}{\mathrm{Vol} \left( B_{cn}^{(2)} \right)}, \label{eq:x_in_B} \\
\forall x \notin B_{cn}^{(2)} \, : &\quad \mu \left( x \right) = 0,  \label{eq:x_notin_B}
\end{align}
where $\mathrm{Vol} \left( B_{cn}^{(2)} \right) = \left| \left\{ x \in \mathbb{Z}^{d} : \left\| x \right\|_{2} < cn \right\} \right|$ denotes the volume of the ball $B_{cn}^{(2)}$. 
\end{corollary}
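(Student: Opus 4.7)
The plan is to simulate the divisible sandpile of Levine and Peres, suitably rescaled, via toppling moves. The starting observation is that, by~\eqref{eq:mu_change_ds} and~\eqref{eq:toppling_move}, a divisible sandpile move at a site $x$ with $\mu(x) > 1$ coincides with the toppling move $T_x^{\mu(x) - 1}$; if $\mu(x) \le 1$, the divisible sandpile move is a no-op. Moreover, toppling moves scale linearly in mass: if the sequence $T_{v_1}^{m_1}, \dots, T_{v_K}^{m_K}$ applied to $M \delta_\rooto$ yields $\sigma_K$, then the rescaled sequence $T_{v_1}^{m_1/M}, \dots, T_{v_K}^{m_K/M}$ applied to $\delta_\rooto$ yields $\sigma_K / M$.

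Fix $\eps \in (0,1)$, say $\eps = 1/2$, and set $M := V_n (1 + \eps)/2$, where $V_n := \mathrm{Vol}(B^{(2)}_{cn})$. The Levine-Peres radius is $r := (M/\omega_d)^{1/d} = cn \cdot \bigl((1+\eps)/2\bigr)^{1/d}(1 + o(1))$ as $n \to \infty$, and since $((1+\eps)/2)^{1/d} < 1$, for any fixed constant $C_0$ we have $r + C_0 \le cn$ once $n$ is large enough (small $n$ can be handled by a direct computation). Theorem~\ref{thm:ds_limit} then gives $D_M \subset B^{(2)}_{r + C_0} \subset B^{(2)}_{cn}$; a standard one-layer bound on the fractionally occupied boundary extends this to $\mathrm{supp}(\nu) \subset B^{(2)}_{cn}$, where $\nu$ is the divisible sandpile limit starting from $M \delta_\rooto$.

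Run the divisible sandpile from $M \delta_\rooto$ using any legal sequence of sites (e.g., always at a site of maximum mass). By Proposition~\ref{prop:ds_abelian} the intermediate distributions $\nu_k$ converge pointwise to $\nu \le 1$, and their supports lie in $\mathrm{supp}(\nu)$, a finite set: once a site carries positive mass, it retains at least $\min(\nu_k(x), 1) > 0$ mass thereafter (a full site drops only to $1$ upon toppling, and a partially occupied site never sheds mass). Pointwise convergence on a finite set gives a finite $K$ with $\nu_K(x) \le \nu(x) + \eps \le 1 + \eps$ for every $x$. Define $\mu := \nu_K / M$; by the rescaling above, $\mu$ is reached from $\delta_\rooto$ by a finite sequence of toppling moves. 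Then~\eqref{eq:x_notin_B} holds because $\mathrm{supp}(\mu) \subset B^{(2)}_{cn}$, and~\eqref{eq:x_in_B} holds because $\mu(x) \le (1+\eps)/M = 2/V_n$ by the choice of $M$.

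The main technical point I expect to need care is the step from Theorem~\ref{thm:ds_limit}, which controls only the fully occupied domain $D_M$, to control on $\mathrm{supp}(\nu)$, which can include a thin shell of sites with $\nu(x) \in (0, 1)$. This should follow from the same analysis in~\cite{lp} with a slightly enlarged additive constant, and it is absorbed into $C_0$ by taking $n$ large relative to that constant.
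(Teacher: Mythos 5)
Your proof is correct and fills in essentially the same argument that the paper's one-line proof sketches (``scaling the masses by $m$''), with the details made explicit. The key points you identify — that divisible sandpile moves at full sites are toppling moves and scale linearly in mass; that Theorem~\ref{thm:ds_limit} controls $D_M$ and a one-layer boundary argument controls $\mathrm{supp}(\nu)$; and that a finite $K$ with $\nu_K \leq 1 + \eps$ exists because the $\nu_k$ converge pointwise on the finite set $\mathrm{supp}(\nu)$ — are exactly the content the paper suppresses, including the remark immediately preceding the corollary. Your choice $M = V_n(1+\eps)/2$ is the right way to make the constant $2$ in \eqref{eq:x_in_B} come out exactly once the $1+\eps$ truncation factor is accounted for. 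One caveat worth being aware of: the containment $r + C_0 \leq cn$ only holds for $n$ large relative to $d$, $c$, and the Levine--Peres constant $c'$, so the statement as you prove it holds for all sufficiently large $n$; this is all that is used in the proof of Theorem~\ref{thm:smoothing_lb_Zd}, where small $n$ is absorbed into the constant $C$, so nothing is lost.
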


\begin{proof} The result follows from Theorem~\ref{thm:ds_limit} by scaling the masses by $m$, for both the mass distributions and the divisible sandpile moves. 
\end{proof}

\subsection{A lower bound on $\mathbb{Z}^{d}$} \label{sec:Zd_lb} 

We are now ready to prove Theorem~\ref{thm:zd}. 
By performing an initial smoothing as detailed in Section~\ref{sec:smoothing_zd}, we are able to obtain a lower bound that matches the upper bound of Theorem~\ref{thm:greedy_ub_Zd} up to constant factors. 

\begin{theorem}\label{thm:smoothing_lb_Zd} 
Start with initial unit mass $\delta_{\root0}$ at the origin $\root0$ of $\mathbb{Z}^{d}$, $d \geq 2$. 
There exists a constant $C$ depending only on $d$ and $p$ such that 
the minimum number of toppling moves needed to transport mass $p$ to distance at least $n$ from the origin satisfies
\begin{equation}\label{eq:Zd_smoothing_lb}
N_{p} \left( \mathbb{Z}^{d}, B_{n}, \delta_{\root0} \right) \geq C n^{d+2}. 
\end{equation}
\end{theorem}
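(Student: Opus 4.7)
The plan is to combine the initial smoothing of Corollary~\ref{cor:divisible_sandpile} with the energy versus second-moment inequality of Lemma~\ref{lem:energy_second_mom}. As observed in Section~\ref{sec:initial_lb}, the direct argument is loose in dimensions $\geq 2$ because $\mathcal{E}_{a}[\delta_{\root0}] = a(\root0) = 0$, while there is a natural ceiling on $\mathcal{E}_{a}[\mu_{t}]$, of order $g(\root0)$ for $d \geq 3$ and of order $\log n$ for $d=2$. Spreading the initial unit mass into an approximately uniform distribution $\widetilde{\mu}$ on $B_{cn}^{(2)}$ raises $\mathcal{E}_{a}[\widetilde{\mu}]$ close to this ceiling, so that the energy gap in Lemma~\ref{lem:energy_second_mom} becomes small enough to yield the correct power $n^{d+2}$.

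Fix a constant $c \in (0, 1/\sqrt{d})$, to be chosen small in terms of $d$ and $p$. Applying Corollary~\ref{cor:divisible_sandpile} produces a finite sequence of toppling moves transforming $\delta_{\root0}$ into a distribution $\widetilde{\mu}$ supported in $B_{cn}^{(2)}$ with $\widetilde{\mu}(x) \leq 2/\mathrm{Vol}(B_{cn}^{(2)})$. Since $c\sqrt d < 1$ we have $B_{cn}^{(2)} \subseteq B_n$, so these smoothing topplings lie in $B_n$ and Corollary~\ref{cor:smoothing} gives $N_{p}(\mathbb{Z}^{d}, B_{n}, \widetilde{\mu}) \leq N_{p}(\mathbb{Z}^{d}, B_{n}, \delta_{\root0})$. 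It thus suffices to lower bound $N_{p}(\mathbb{Z}^{d}, B_{n}, \widetilde{\mu})$. Given any sequence $\widetilde{\mu} = \mu_{0}, \mu_{1}, \ldots, \mu_{t}$ of toppling moves with $\mu_{t}(B_{n}) \leq 1-p$, the same truncation argument as in Section~\ref{sec:initial_lb} lets us assume every $v_{i} \in B_{n}$, so $\mathrm{supp}(\mu_{t}) \subseteq B_{n+1}$. Since $\|v\|_{2} \geq \|v\|_{1}/\sqrt{d}$, we then have $M_{2}[\widetilde{\mu}] \leq (cn)^{2}$ and $M_{2}[\mu_{t}] \geq pn^{2}/d$; fixing $c$ with $c^{2} \leq p/(2d)$ gives $M_{2}[\mu_{t}] - M_{2}[\widetilde{\mu}] \geq (p/(2d))\, n^{2}$.

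It remains to upper bound the energy gap $\mathcal{E}_{a}[\mu_{t}] - \mathcal{E}_{a}[\widetilde{\mu}]$ in each dimension. For $d \geq 3$, since $a = g(\root0) - g$ and $g \geq 0$, we have $\mathcal{E}_{a}[\mu_{t}] \leq g(\root0)$, so the gap is bounded by $\mathcal{E}_{g}[\widetilde{\mu}]$; using the pointwise density bound on $\widetilde{\mu}$ together with~\eqref{eq:green_estimates_d3} and the elementary estimate $\sum_{\|z\|_{2} \leq 2cn} g(z) = O((cn)^{2})$, a double-sum comparison yields $\mathcal{E}_{g}[\widetilde{\mu}] = O((cn)^{2-d})$, and Lemma~\ref{lem:energy_second_mom} gives $t \geq C n^{d+2}$. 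For $d = 2$, using~\eqref{eq:green_estimates_d2} together with $\mathrm{supp}(\mu_{t}) \subseteq B_{n+1}$ yields $\mathcal{E}_{a}[\mu_{t}] \leq \tfrac{2}{\pi}\ln(2n) + O(1)$, and an analogous estimate on $\widetilde{\mu}$ (using the density bound to absorb both the $O(\|x\|_{2}^{-2})$ correction in~\eqref{eq:green_estimates_d2} and the diagonal term $a(\root0) = 0$) gives the matching lower bound $\mathcal{E}_{a}[\widetilde{\mu}] \geq \tfrac{2}{\pi}\ln(cn) - O(1)$. The gap is then $O(\log(1/c)) = O(1)$ once $c$ is fixed, so Lemma~\ref{lem:energy_second_mom} again yields $t \geq C n^{4}$. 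The main obstacle is the $d = 2$ case: both the reduction to $\mathrm{supp}(\mu_{t}) \subseteq B_{n+1}$ (needed to avoid a spurious logarithmic factor from the support of $\mu_{t}$) and the lower bound on $\mathcal{E}_{a}[\widetilde{\mu}]$ (where $a$ can be small or even negative) hinge crucially on the pointwise density bound supplied by the initial smoothing.
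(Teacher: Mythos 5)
Your proposal matches the paper's proof in all essential respects: both perform the divisible-sandpile smoothing of Corollary~\ref{cor:divisible_sandpile}, invoke Corollary~\ref{cor:smoothing} to reduce to bounding $N_{p}$ from the smoothed measure, apply Lemma~\ref{lem:energy_second_mom}, choose $c^{2}=p/(2d)$ to make the second-moment gap $\Omega(n^{2})$, and then estimate the energy gap separately in $d\geq 3$ (bounding it by $\mathcal{E}_{g}[\widetilde{\mu}]=O(n^{2-d})$) and $d=2$ (matching logarithms up to $O(1)$). The only cosmetic difference is that the paper organizes the energy estimate via an explicit dyadic shell decomposition, whereas you sketch a direct sup-times-integral bound $\sum_{\lVert z\rVert_{2}\leq 2cn}g(z)=O((cn)^{2})$; both yield the same estimate, and your version is, if anything, a touch more elementary, though you leave the $d=2$ lower bound on $\mathcal{E}_{a}[\widetilde{\mu}]$ at the level of a sketch.
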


\begin{proof}
The first step is to smooth the distribution $\delta_{\root0}$. 
Let $c := \sqrt{p / (2d)}$. 
By Corollary~\ref{cor:divisible_sandpile} there exists a finite sequence of toppling moves taking $\delta_{\root0}$ to a mass distribution $\mu$ satisfying~\eqref{eq:x_in_B} and~\eqref{eq:x_notin_B}. 
By Corollary~\ref{cor:smoothing} we have that 
$N_{p} \left( \mathbb{Z}^{d}, B_{n}, \delta_{\root0} \right) \geq N_{p} \left( \mathbb{Z}^{d}, B_{n}, \mu \right)$, 
so it suffices to bound $N_{p} \left( \mathbb{Z}^{d}, B_{n}, \mu \right)$ from below. 

Suppose that starting from $\mu$ a sequence of $t$ toppling moves are applied to obtain mass distributions 
$\mu_{0} \equiv \mu, \mu_{1}, \mu_{2}, \dots, \mu_{t}$ that satisfy  
$\mu_{t} \left( B_{n} \right) \leq 1 - p$. 
Let $v_{i}$ denote the vertex that was toppled to get from $\mu_{i-1}$ to $\mu_{i}$, 
and let $m_{i}$ denote the mass that was toppled at this step. 
We may assume that $\left\| v_{i} \right\|_{1} \leq n-1$ for every $i \leq t$, since  
any other toppling move can be removed from the sequence to obtain a shorter sequence that still moves mass $p$ to distance $n$ from the origin. 

By Lemma~\ref{lem:energy_second_mom} we have that 
\begin{equation}\label{eq:t_lb}
 t \geq \frac{\left( M_{2} \left[ \mu_{t} \right] - M_{2} \left[ \mu_{0} \right] \right)^{2}}{\mathcal{E}_{a} \left[ \mu_{t} \right] - \mathcal{E}_{a} \left[ \mu_{0} \right]}
\end{equation}
and in the following we bound the numerator and the denominator separately, starting with the numerator. 

Since $\mu_{t} \left( \left\{ x \in \mathbb{Z}^{d} : \left\| x \right\|_{1} \geq n \right\} \right) \geq p$ 
and $\left\| x \right\|_{2} \geq \left\| x \right\|_{1} / \sqrt{d}$, 
we have that 
$M_{2} \left[ \mu_{t} \right] \geq \tfrac{pn^{2}}{d}$. 
On the other hand, 
the support of $\mu_{0}$ is contained within $B_{cn}^{(2)}$ and so 
$M_{2} \left[ \mu_{0} \right] \leq c^{2} n^{2} = \tfrac{pn^{2}}{2d}$. 
Putting these two estimates together we obtain that 
\begin{equation}\label{eq:second_mom_estimate}
\left( M_{2} \left[ \mu_{t} \right] - M_{2} \left[ \mu_{0} \right] \right)^{2} \geq \frac{p^{2}}{4 d^{2}} n^{4}.
\end{equation}

From~\eqref{eq:t_lb} and~\eqref{eq:second_mom_estimate} we have that in order to show~\eqref{eq:Zd_smoothing_lb}, what remains is to show that 
\begin{equation}\label{eq:energy_estimate}
\mathcal{E}_{a} \left[ \mu_{t} \right] - \mathcal{E}_{a} \left[ \mu_{0} \right] \leq C' n^{2-d}
\end{equation}
for some constant $C'$ depending only on $d$ and $p$. 
At this point the proof slightly differs for $d = 2$ and $d \geq 3$. We start with the case of $d \geq 3$. 

Recall from Section~\ref{sec:energy} that when $d \geq 3$ then 
\[
\mathcal{E}_{a} \left[ \mu_{t} \right] - \mathcal{E}_{a} \left[ \mu_{0} \right] 
= \mathcal{E}_{g} \left[ \mu_{0} \right] - \mathcal{E}_{g} \left[ \mu_{t} \right] 
\leq \mathcal{E}_{g} \left[ \mu_{0} \right]. 
\]
We estimate this latter quantity by dividing $\mathbb{Z}^{d} \times \mathbb{Z}^{d}$ into shells 
\[
E_{k} := \left\{ \left( x, y \right) \in \mathbb{Z}^{d} \times \mathbb{Z}^{d} : \frac{2n}{2^{k}} < \left\| x - y \right\|_{2} \leq \frac{2n}{2^{k-1}} \right\}
\]
and estimating the sum on each shell separately. Since the support of $\mu_{0}$ is contained in $B_{cn}^{(2)}$, we can write 
\begin{equation}\label{eq:sum_shells}
\mathcal{E}_{g} \left[ \mu_{0} \right] 
= \sum_{x \in B_{cn}^{(2)}} g \left( \root0 \right) \mu_{0} \left( x \right)^{2} 
+ \sum_{k=1}^{K} \sum_{\left( x, y \right) \in E_{k}} g \left( x - y \right) \mu_{0} \left( x \right) \mu_{0} \left( y \right),
\end{equation}
where $K = \left\lceil \log_{2} \left( 2n \right) \right\rceil$. 
Using~\eqref{eq:x_in_B} we have that the first term in~\eqref{eq:sum_shells} can be bounded as follows: 
\begin{equation}\label{eq:xy_equal}
\sum_{x \in B_{cn}^{(2)}} g \left( \root0 \right) \mu_{0} \left( x \right)^{2} 
\leq \sum_{x \in B_{cn}^{(2)}} \frac{4 g \left( \root0 \right)}{\mathrm{Vol} \left( B_{cn}^{(2)} \right)^{2}} 
= \frac{4 g \left( \root0 \right)}{\mathrm{Vol} \left( B_{cn}^{(2)} \right)} 
= O \left( n^{-d} \right),
\end{equation}
where in the last estimate we used that $\mathrm{Vol} \left( B_{cn}^{(2)} \right) = \Theta \left( n^{d} \right)$. 
Now if $x \neq y$ then we have from~\eqref{eq:green_estimates_d3} that 
\[
g \left( x - y \right) \leq a_{d} \left\| x - y \right\|_{2}^{2-d} + C_{d} \left\| x - y \right\|_{2}^{1-d}
\]
and so if $\left( x, y \right) \in E_{k}$ then 
\begin{equation}\label{eq:g_estimate_shell}
g \left( x - y \right) 
\leq a_{d} \left( \frac{2n}{2^{k-1}} \right)^{2-d} + C_{d} \left( \frac{2n}{2^{k-1}} \right)^{1-d} 
= O \left( n^{2-d} \times 2^{dk} \times  \left( 2^{-2k} + n^{-1} \times 2^{-k} \right) \right).
\end{equation}
Now to bound the mass of a shell first note that for any $x \in \mathbb{Z}^{d}$ we have that 
\[
\sum_{y : \left( x, y \right) \in E_{k}} \mu_{0} \left( y \right) 
\leq \mathrm{Vol} \left( B_{\frac{2n}{2^{k-1}}}^{(2)} \right) \times \frac{2}{\mathrm{Vol} \left( B_{cn}^{(2)} \right)}  
= O \left( 2^{-dk} \right),
\]
where we used again that 
$\mathrm{Vol} \left( B_{r}^{(2)} \right) = \Theta \left( r^{d} \right)$. 
This then implies that 
\begin{equation}\label{eq:mass_of_shell}
\sum_{\left( x, y \right) \in E_{k}} \mu_{0} \left( x \right) \mu_{0} \left( y \right) 
= O \left( 2^{-dk} \right). 
\end{equation}
Putting together~\eqref{eq:g_estimate_shell} and~\eqref{eq:mass_of_shell} we obtain that 
\[
\sum_{\left( x, y \right) \in E_{k}} g \left( x - y \right) \mu_{0} \left( x \right) \mu_{0} \left( y \right) 
= O \left( n^{2-d} \times  \left( 2^{-2k} + n^{-1} \times 2^{-k} \right) \right). 
\]
Summing this over $k$ we get that 
\[
 \sum_{k=1}^{K} \sum_{\left( x, y \right) \in E_{k}} g \left( x - y \right) \mu_{0} \left( x \right) \mu_{0} \left( y \right) 
 = O \left( n^{2-d} \right),
\]
which, together with~\eqref{eq:xy_equal}, shows that $\mathcal{E}_{g} \left[ \mu_{0} \right] = O \left( n^{2-d} \right)$. 
This concludes the proof of~\eqref{eq:energy_estimate} for $d \geq 3$.

The case of $d = 2$ is similar, but the Green's function behaves differently, and we cannot neglect the energy of the mass distribution $\mu_{t}$ as we did for $d \geq 3$. 
We first bound $\mathcal{E}_{a} \left[ \mu_{t} \right]$ from above. 
Recall that $a \left( \root0 \right) = 0$ 
and that for every $x \neq \root0$ we have the estimate 
$a \left( x \right) \leq \tfrac{2}{\pi} \ln \left\| x \right\|_{2} + \kappa + C_{2} \left\| x \right\|_{2}^{-2}$ 
(see~\eqref{eq:green_estimates_d2}). 
We know that every $x$ in the support of $\mu_{t}$ satisfies $\left\| x \right\|_{1} \leq n$ and hence also $\left\| x \right\|_{2} \leq n$. 
Thus by the triangle inequality if both $x$ and $y$ are in the support of $\mu_{t}$ then $\left\| x - y \right\|_{2} \leq 2n$. 
Therefore 
\begin{align}
\mathcal{E}_{a} \left[ \mu_{t} \right] 
&= \sum_{x, y \in \mathbb{Z}^{d} : x \neq y} a \left( x - y \right) \mu_{t} \left( x \right) \mu_{t} \left( y \right) \notag \\
&\leq \left( \frac{2}{\pi} \ln \left( 2n \right) + \kappa + C_{2} \right) \sum_{x, y \in \mathbb{Z}^{d} : x \neq y} \mu_{t} \left( x \right) \mu_{t} \left( y \right) \notag \\
&= \frac{2}{\pi} \ln \left( n \right) + O \left( 1 \right). \label{eq:Ea_ub}
\end{align}
Next we bound from below the energy $\mathcal{E}_{a} \left[ \mu_{0} \right]$. Noting again that $a \left( \root0 \right) = 0$, we can write $\mathcal{E}_{a} \left[ \mu_{0} \right]$ similarly to~\eqref{eq:sum_shells}: 
\begin{equation}\label{eq:sum_shells_d2}
\mathcal{E}_{a} \left[ \mu_{0} \right] = \sum_{k=1}^{K} \sum_{ \left( x, y \right) \in E_{k} } a \left( x - y \right) \mu_{0} \left( x \right) \mu_{0} \left( y \right). 
\end{equation}
For $x \neq \root0$ we have the estimate 
$a \left( x \right) \geq \tfrac{2}{\pi} \ln \left\| x \right\|_{2} - C_{2} \left\| x \right\|_{2}^{-2}$ 
(see~\eqref{eq:green_estimates_d2} and note that $\kappa > 0$), 
and thus if $\left( x, y \right) \in E_{k}$ then 
\begin{equation}\label{eq:axy_lb}
a \left( x - y \right) \geq \frac{2}{\pi} \ln \left( 2 n \right) - \frac{2}{\pi} \ln \left( 2 \right) \times k - C_{2} \times \frac{2^{2k}}{4n^{2}}. 
\end{equation}
Plugging the estimate~\eqref{eq:axy_lb} into~\eqref{eq:sum_shells_d2} we get three terms which we can each estimate separately. 
First, observing that 
\[
\sum_{k=1}^{K} \sum_{ \left( x, y \right) \in E_{k} } \mu_{0} \left( x \right) \mu_{0} \left( y \right) 
= 1 - \sum_{x \in \mathbb{Z}^{d}} \mu_{0} \left( x \right)^{2} 
\geq 1 - \frac{4}{\mathrm{Vol} \left( B_{cn}^{(2)} \right)} 
= 1 - O \left( n^{-2} \right),
\]
we get that
\begin{equation}\label{eq:Ea_mu0_main}
\sum_{k=1}^{K} \sum_{ \left( x, y \right) \in E_{k} } \frac{2}{\pi} \ln \left( 2 n \right) \mu_{0} \left( x \right) \mu_{0} \left( y \right) 
= \frac{2}{\pi} \ln \left( n \right) - O \left( 1 \right). 
\end{equation}
For the second term in~\eqref{eq:axy_lb} we use~\eqref{eq:mass_of_shell} to obtain that 
\begin{equation}\label{eq:Ea_mu0_second}
\sum_{k=1}^{K} \sum_{ \left( x, y \right) \in E_{k} } \left( \frac{2}{\pi} \ln \left( 2 \right) \times k \right) \mu_{0} \left( x \right) \mu_{0} \left( y \right) 
= \sum_{k=1}^{K} O \left( k \times 2^{-2k} \right) 
= O \left( 1 \right). 
\end{equation}
For the third term in~\eqref{eq:axy_lb} we again use~\eqref{eq:mass_of_shell}, together with the fact that $K = \left\lceil \log_{2} \left( 2n \right) \right\rceil$, to get that 
\begin{equation}\label{eq:Ea_mu0_third}
\sum_{k=1}^{K} \sum_{ \left( x, y \right) \in E_{k} } \left( C_{2} \times \frac{2^{2k}}{4n^{2}} \right) \mu_{0} \left( x \right) \mu_{0} \left( y \right) 
= \frac{1}{n^{2}} \sum_{k=1}^{K} O \left( 1 \right) 
= O \left( \frac{\log \left( n \right) }{n^{2}} \right). 
\end{equation}
Putting together~\eqref{eq:Ea_mu0_main},~\eqref{eq:Ea_mu0_second}, and~\eqref{eq:Ea_mu0_third} with~\eqref{eq:sum_shells_d2} and~\eqref{eq:axy_lb} we obtain that 
\begin{equation}\label{eq:Ea_mu0_lb}
\mathcal{E}_{a} \left[ \mu_{0} \right] = \frac{2}{\pi} \ln \left( n \right) - O \left( 1 \right). 
\end{equation}
Finally, putting together~\eqref{eq:Ea_ub} and~\eqref{eq:Ea_mu0_lb} we obtain~\eqref{eq:energy_estimate} for $d=2$. 
\end{proof}

\section{A general upper bound} \label{sec:rw_ub} 

In this section we provide two proofs of Theorem~\ref{thm:volumeexittime}. 

\begin{proof}[Proof of Theorem~\ref{thm:volumeexittime} using random walk]
We write $\left| A \right| := \mathrm{Vol} \left( A \right)$ to abbreviate notation. 
Let $x_{1}, x_{2}, \dots, x_{\left| A \right|}$ denote the vertices of $A$ in some specific order. 
We define a sequence of toppling moves that proceeds in rounds by repeatedly cycling through the vertices of $A$ in this specified order and at each move toppling all of the mass that was at the given vertex at the beginning of the round. 
That is, letting $\mu_{0} := \delta_{\rootv}$, we let 
$\mu_{1} := T_{x_{1}}^{\mu_{0} \left( x_{1} \right)} \mu_{0}$, 
then 
$\mu_{2} := T_{x_{2}}^{\mu_{0} \left( x_{2} \right)} \mu_{1}$, 
and so on. 
In general, for a positive integer $i$, let $i^{*}$ be the unique integer in $\left\{ 1, 2, \dots, \left|A \right| \right\}$ such that $i - i^{*}$ is divisible by $\left| A \right|$. We then have that  
\begin{equation}\label{eq:seq_topplings_rw}
\mu_{i} := T_{x_{i^{*}}}^{m_{i}} \mu_{i-1}, \qquad  \text{with } \quad m_{i} = \mu_{i-i^{*}} \left( x_{i^{*}} \right).
\end{equation}
We call each group of $\left|A\right|$ toppling moves a \emph{round} of the toppling process. 

Let $\left\{ Z_{t} \right\}_{t \geq 0}$ denote the random walk on $G$ that is killed when it exits $A$, i.e., $Z_{t} = X_{t \wedge T_{A}}$, with initial condition $Z_{0} = \rootv$.  
Observe that all the toppling moves of a given round can be executed in parallel, since the mass that is toppled at each vertex only depends on the mass distribution at the beginning of the round. 
Since all of the mass that is present in $A$ at the beginning of the round is toppled, each round of the toppling process defined in~\eqref{eq:seq_topplings_rw} perfectly simulates a step of the killed random walk $\left\{ Z_{t} \right\}_{t \geq 0}$. 
That is, for every nonnegative integer $t$, the measure $\mu_{t \left|A\right|}$ agrees with the distribution of $Z_{t}$. 

Let 
\[
M := \inf \left\{ i \geq 0 : \mu_{i\left|A\right|} \left( A \right) \leq 1 - p \right\}
\]
denote the first time that the distribution of the killed random walk has mass at least $p$ outside of the set $A$. 
By the definition of the exit time $T_{A}$ we have that 
\begin{equation}\label{eq:expectation_TA}
\mathbb{E}_{\rootv} \left[ T_{A} \right] 
= \sum_{k=1}^{\infty} \mathbb{P}_{\rootv} \left( T_{A} \geq k \right) 
= \sum_{k=1}^{\infty} \mathbb{P}_{\rootv} \left( Z_{k-1} \in A \right) 
= \sum_{k=1}^{\infty} \mu_{\left( k - 1 \right) \left| A \right|} \left( A \right).
\end{equation}
Now by the definition of $M$ we have that for every $m < M$, 
the measure $\mu_{m\left|A\right|}$ satisfies 
$\mu_{m\left|A\right|} \left( A \right) > 1 - p$. 
Therefore keeping only the first $M$ terms in the sum in~\eqref{eq:expectation_TA} we obtain the bound  
\[
\mathbb{E}_{\rootv} \left[ T_{A} \right] 
\geq \sum_{k=1}^{M} \mu_{\left( k - 1 \right) \left| A \right|} \left( A \right) 
> M \left( 1 - p \right).
\]
By the definition of $M$ this immediately implies that 
\[
N_{p} \left( G, A, \delta_{\rootv} \right) \leq M \times \left| A \right| < \left( 1 - p \right)^{-1} \mathbb{E}_{\rootv} \left[ T_{A} \right] \times \left| A \right|. \qedhere 
\]
\end{proof}

Theorem~\ref{thm:volumeexittime} can also be proven using a greedy algorithm, similarly to the proof of the greedy upper bound on $\mathbb{Z}^{d}$ presented in Section~\ref{sec:Zd_ub}. The only part of that proof that was specific to $\mathbb{Z}^{d}$ was the use of the second moment of the mass distribution. In particular, the key property of the second moment that we used was that $\Delta \left\| x \right\|_{2}^{2} = 1$ for every $x \in \mathbb{Z}^{d}$. For a general graph $G = (V, E)$ and a subset of the vertices $A \subset V$, the expected first exit time from $A$ starting from a given vertex is a function whose discrete Laplacian is constant on $A$. This is because by conditioning on the first step of the random walk we have that 
\begin{equation}\label{eq:exittime_onestep}
\mathbb{E}_{x} \left[ T_{A} \right] = 1 + \frac{1}{\left| \mathcal{N}_{x} \right|} \sum_{y \sim x} \mathbb{E}_{y} \left[ T_{A} \right]
\end{equation}
for every $x \in A$. 

\begin{proof}[Proof of Theorem~\ref{thm:volumeexittime} using a greedy algorithm]
Consider the following greedy algorithm for choosing toppling moves:  
until the mass outside of $A$ is at least $p$, 
choose $v \in A$ with the largest mass in $A$ (break ties arbitrarily) 
and topple the full mass at $v$. 
Let $\mu_{0} \equiv \delta_{\rootv}, \mu_{1}, \mu_{2}, \dots$ denote the resulting mass distributions, 
let $v_{i}$ denote the vertex that was toppled to get from $\mu_{i-1}$ to $\mu_{i}$, 
and let $m_{i}$ denote the mass that was toppled at this step. 
By~\eqref{eq:toppling_move} we can then write 
$
\mu_{i} = \mu_{i-1} + m_{i} \Delta \delta_{v_{i}}. 
$
Furthermore, let $t$ denote the number of moves necessary for this greedy algorithm to transport mass $p$ to distance at least $n$ from the origin, 
i.e., $t = \min \left\{ i \geq 1 : \mu_{i} \left( A \right) \leq 1 - p \right\}$. 

For $x \in V$, let 
$h \left( x \right) := - \mathbb{E}_{x} \left[ T_{A} \right]$. 
We have $h \left( x \right) = 0$ for every $x \notin A$, 
and, by~\eqref{eq:exittime_onestep}, we have that $\Delta h \left( x \right) = 1$ for every $x \in A$. 
For a mass distribution $\mu$ define 
$\widetilde{M} \left[ \mu \right] := \sum_{x \in V} \mu \left( x \right) h \left( x \right)$. 
We have that 
$\widetilde{M} \left[ \mu_{0} \right] = - \mathbb{E}_{\rootv} \left[ T_{A} \right]$ 
and 
$\widetilde{M} \left[ \mu_{t} \right] \leq 0$. 
We first compute how $\widetilde{M}$ changes after each toppling move:
\[
\widetilde{M} \left[ \mu_{i} \right] - \widetilde{M} \left[ \mu_{i-1} \right] 
= \sum_{x \in V} \mu_{i} \left( x \right) h \left( x \right) - \sum_{x \in V} \mu_{i-1} \left( x \right) h \left( x \right) 
= m_{i} \sum_{x \in V} \Delta \delta_{v_{i}} \left( x \right) \cdot h \left( x \right). 
\]
Now by definition we have that 
\begin{align*}
 \sum_{x \in V} \Delta \delta_{v_{i}} \left( x \right) \cdot h \left( x \right) 
 &= \sum_{x \in V} \left( - \delta_{v_{i}} \left( x \right) + \frac{1}{\left| \mathcal{N}_{v_{i}} \right|} \sum_{y \sim v_{i}} \delta_{y} \left( x \right) \right) h \left( x \right) \\
 &= - h \left( v_{i} \right) + \frac{1}{\left| \mathcal{N}_{v_{i}} \right|} \sum_{y \sim v_{i}} h \left( y \right) = \Delta h \left( v_{i} \right) = 1,
\end{align*}
where the last equality follows from the fact that $\Delta h \left( x \right) = 1$ for every $x \in A$. 
Putting the previous two displays together we obtain that 
\[
\widetilde{M} \left[ \mu_{i} \right] - \widetilde{M} \left[ \mu_{i-1} \right] = m_{i}
\]
for every $i \leq t$. 
Note that the greedy choice implies that $m_{i} > \left( 1- p \right) / \left| A \right|$ for every $i \leq t$. 
Therefore we obtain that 
\[
\mathbb{E}_{\rootv} \left[ T_{A} \right] 
\geq \widetilde{M} \left[ \mu_{t} \right] - \widetilde{M} \left[ \mu_{0} \right] 
= \sum_{i=1}^{t} \left( \widetilde{M} \left[ \mu_{i} \right] - \widetilde{M} \left[ \mu_{i-1} \right] \right) 
= \sum_{i=1}^{t} m_{i} 
> t \times \frac{1-p}{\left|A\right|},
\]
and the result follows by rearranging this inequality. 
\end{proof}

\section{Controlled diffusion on the comb} \label{sec:proofs_comb} 

The general upper bound given by Theorem~\ref{thm:volumeexittime} applied directly to the comb $\mathbb{C}_{2}$ gives a bound of 
\begin{equation}\label{eq:comb_general_ub}
 N_{p} \left( \mathbb{C}_{2}, B_{n}, \delta_{\root0} \right) \leq C \left( 1 - p \right)^{-1} n^{4} 
\end{equation}
for some constant $C$, since $\mathrm{Vol} \left( B_{n} \right) = \Theta \left( n^{2} \right)$ and $\mathbb{E}_{\root0} \left[ T_{B_{n}} \right] = \Theta \left( n^{2} \right)$. 
However, this bound is not tight. 
Recall that the general upper bound that gives~\eqref{eq:comb_general_ub} is proven by simulating a random walk within $B_{n}$. 
The key observation that improves~\eqref{eq:comb_general_ub} to a tight bound is that one can restrict the random walk on $\mathbb{C}_{2}$ to the rectangle 
$R_{C,n} := \left[ - C \sqrt{n}, C \sqrt{n} \right] \times \left[ - n, n \right]$ for large enough $C$. 
This is because with probability close to $1$, the random walk will exit $B_{n}$ before it exits the rectangle $R_{C,n}$. 
Since $\mathrm{Vol} \left( R_{C,n} \right) = \Theta \left( n^{3/2} \right)$, this gives the improved upper bound of $O \left( n^{7/2} \right)$. 

To obtain a matching lower bound, we first smooth the mass distribution by simulating the random walk killed when it exits the rectangle 
$\left[ - C \sqrt{n}, C \sqrt{n} \right] \times \left( - n/2, n/2 \right)$. 
The resulting mass distribution $\mu$ has almost all of its mass on the ``ends of the teeth'', i.e., on the set 
\[
S := \left\{ \left( i, \pm \frac{n}{2} \right) : \left| i \right| \leq C \sqrt{n} \right\}.
\]
Moreover, most of the mass is roughly uniformly spread on $S$, 
in the sense that $\mu \left( x \right) = O \left( 1 / \sqrt{n} \right)$ for every $x \in S$ (after potentially throwing away a tiny constant mass). 
So in order to move a constant mass $p$ to distance $n$ from the origin $\root0$, 
we need to move a constant fraction of the mass present at $\Omega \left( \sqrt{n} \right)$ points in $S$. 
Since each ``tooth'' of the comb is locally a line, this requires $\Omega \left( n^{3} \right)$ toppling moves along each tooth (by Theorem~\ref{thm:zd} for $d = 1$, proven in~\cite{overhang}), 
resulting in $\Omega \left( n^{7/2} \right)$ toppling moves in total.

The rest of this section makes the two preceding paragraphs precise and proves Theorem~\ref{thm:comb}. 
Let $\left\{ X_{t} \right\}_{t \geq 0}$ denote random walk on $\mathbb{C}_{2}$ started at the origin, i.e., with $X_{0} = \root0$. 
We write $R \equiv R_{C,n}$ when the implied parameters are clear from the context. 
Let $T_{R}$ denote the first exit time of the random walk $\left\{ X_{t} \right\}_{t \geq 0}$ from $R$ 
and let $Z_{t} := X_{t \wedge T_{R}}$ denote the random walk killed when it exits $R$. 
We write $X_{t} = \left( X_{t}^{(1)}, X_{t}^{(2)} \right)$ and $Z_{t} = \left( Z_{t}^{(1)}, Z_{t}^{(2)} \right)$ for the coordinates of $X_{t}$ and $Z_{t}$. 

The following lemma says that by making $C$ large enough, one can make the probability that the random walk exits $R_{C,n}$ along one of the ``teeth'' arbitrarily close to $1$. 
\begin{lemma}\label{lem:comb_rectangle_teeth}
For every $\eps > 0$ there exists $C = C \left( \eps \right) < \infty$ such that 
\[
\p_{\root0} \left( X_{T_{R_{C,n}}}^{(2)} = 0 \right) \leq \eps. 
\]
\end{lemma}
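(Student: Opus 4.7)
The plan is to decompose the walk $\{X_t\}$ on the comb according to its visits to the spine (the $x$-axis) versus its excursions into the teeth, exploiting the fact that the walk can only exit with $y$-coordinate $0$ by taking a horizontal step from the spine. Let $T_0 < T_1 < \ldots$ denote the successive times when the walk is on the spine, and let $Y_k := X_{T_k}^{(1)}$ be the $x$-coordinate at the $k$-th spine visit. Each spine vertex has four neighbors, so conditional on the walk being at a spine vertex, with probability $1/2$ the next step is horizontal (changing $Y_k$ by $\pm 1$ with equal probability) and with probability $1/2$ it is vertical, initiating a tooth excursion. Thus $\{Y_k\}$ is a lazy simple random walk on $\mathbb{Z}$ with step distribution $\mathrm{P}(+1) = \mathrm{P}(-1) = 1/4$ and $\mathrm{P}(0) = 1/2$, and $Y_0 = 0$.

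The next step is to quantify the competition between two ways the walk leaves $R_{C,n}$. A tooth excursion starting at $(Y_k, \pm 1)$ is a simple random walk on a line segment, and by gambler's ruin it reaches $y = \pm(n+1)$ before returning to the spine with probability $1/(n+1)$. Hence the events $E_k := \{\text{exit via tooth at the } k\text{-th spine visit}\}$ are (conditionally) independent with $\mathrm{P}(E_k) = \frac{1}{2(n+1)}$. On the other hand, exit via the sides of $R_{C,n}$ (i.e.\ with $X_{T_R}^{(2)} = 0$) requires $|Y_k|$ to first reach the level $C\sqrt{n}$ followed by an outward horizontal step. Writing $K_1 := \min\{k : |Y_k| \geq C\sqrt{n}\}$ and $K_2 := \min\{k : E_k \text{ occurs}\}$, the event $\{X_{T_R}^{(2)} = 0\}$ is contained in $\{K_1 < K_2\}$, so it suffices to bound $\mathrm{P}(K_1 < K_2)$.

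We split this event: for any threshold $T$,
\[
\mathrm{P}(K_1 < K_2) \leq \mathrm{P}(K_1 < T) + \mathrm{P}(K_2 \geq T).
\]
The second term is geometric: $\mathrm{P}(K_2 \geq T) \leq (1 - \tfrac{1}{2(n+1)})^T \leq \exp(-T/(2(n+1)))$. For the first term, apply the reflection principle together with Hoeffding's inequality (or Azuma's inequality for the martingale $Y_k$) to the lazy random walk, yielding $\mathrm{P}(\max_{k \leq T} |Y_k| \geq C\sqrt{n}) \leq 2 \exp(-c\, C^{2} n/T)$ for an absolute constant $c > 0$. Choosing $T := 2(n+1)\log(2/\eps)$ makes the second term at most $\eps/2$; then the first term becomes at most $2\exp(-c' C^2/\log(2/\eps))$, which is at most $\eps/2$ for $C = C(\eps)$ sufficiently large. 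Combining gives the claim.

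The only mildly delicate point is ensuring that $\{Y_k\}$ really is a lazy random walk and that the tooth-exit indicators $E_k$ are conditionally independent of the past and of $\{Y_k\}$; this follows from the strong Markov property applied at each spine visit $T_k$ together with the observation that a returning excursion leaves $Y_k$ unchanged. Everything else is a standard comparison between the diffusive scale $\sqrt{T}$ of a lazy walk and the exponential tail of the geometric variable $K_2$, and the constants can be chosen explicitly in terms of $\eps$.
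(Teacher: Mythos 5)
Your proof is correct and takes a genuinely different route from the paper's. The paper decomposes by \emph{time}: it argues, via the CLT applied to the Markovian second coordinate $X^{(2)}_{t}$, that by time $cn^{2}$ the walk has with high probability left the strip $\{|y|\leq n\}$, and then, via a local-time estimate, that the walk visits the spine at most $O(n)$ times before time $cn^{2}$, so the lazy walk $X^{(1)}$ remains within $C\sqrt{n}$. You instead decompose by \emph{spine visits}: each spine visit independently triggers a tooth exit with probability $\tfrac{1}{2(n+1)}$ by gambler's ruin, so the number of spine visits $K_{2}$ before a tooth exit has a geometric tail $e^{-T/(2(n+1))}$, and you compare this against the diffusive scale of the embedded lazy walk $\{Y_{k}\}$ via Azuma. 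This bypasses both the CLT step and the local-time lemma and replaces them with a single gambler's-ruin computation; the underlying heuristic — roughly $\Theta(n)$ spine visits occur before a tooth exit, so $|Y|$ only spreads to order $\sqrt{n}$ — is the same, but your bookkeeping is more self-contained. Two small remarks on the write-up: the indicators $E_{k}$ are in fact \emph{unconditionally} i.i.d.\ (each is a measurable function of the i.i.d.\ excursion between consecutive spine visits), which is all the geometric tail bound needs, so the ``conditionally independent'' phrasing can be sharpened; and the inclusion $\{X_{T_{R}}^{(2)}=0\}\subseteq\{K_{1}<K_{2}\}$ is correct but deserves the one-line justification that a side exit happens at a spine visit $j$ with $|Y_{j}|=C\sqrt{n}$ and a horizontal outward step, which forces $K_{1}\leq j$ and, since no $E_{i}$ occurs for $i\leq j$, also $K_{2}>j$.
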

\begin{proof}
It suffices to show that if we run the random walk on $\mathbb{C}_{2}$ for $c n^{2}$ steps, where $c$ is large enough, 
then the probability that the random walk has not yet reached $\pm n$ in the second coordinate is small 
and the probability that the random walk has reached $\pm C \sqrt{n}$ in the first coordinate is also small. 
More precisely, the statement follows from the following two inequalities:
\begin{align}
\p_{\root0} \left( \left| X_{t}^{(2)} \right| \leq n \text{ for every } t \leq c n^{2} \right) &\leq \eps / 2, \label{eq:not_exit_teeth} \\
\p_{\root0} \left( \left| X_{t}^{(1)} \right| \geq C \sqrt{n} \text{ for some } t \leq c n^{2} \right) &\leq \eps / 2. \label{eq:exit_side}
\end{align}
Note that $\left\{ X_{t}^{(2)} \right\}_{t \geq 0}$ is Markovian: when away from $0$ it behaves like simple symmetric random walk on $\mathbb{Z}$ and at $0$ it becomes lazy, i.e., it stays put with probability $1/2$, and it jumps to $\pm 1$ with probability $1/4$ each. Therefore~\eqref{eq:not_exit_teeth} follows from classical random walk estimates (for instance, it follows from the central limit theorem, see, e.g.,~\cite[Theorem~2.9]{revesz2005random}), provided $c = c \left( \eps \right)$ is large enough.  

Now fix $c$ such that~\eqref{eq:not_exit_teeth} holds. Again by classical estimates (see, e.g.,~\cite[Theorem~9.11]{revesz2005random}) there exists a constant $c'$ such that 
\[
\# \left\{ t : t \leq c n^{2}, X_{t}^{(2)} = 0 \right\} \leq c' n 
\]
with probability at least $1 - \eps / 4$. 
Note that $\left\{ X_{t}^{(1)} \right\}_{t \geq 0}$ only moves at times when $X_{t}^{(2)} = 0$, and when it does, it moves according a lazy random walk, staying in put with probability $1/2$. 
Let $\left\{ Y_{t} \right\}_{t \geq 0}$ denote such a lazy random walk. 
By classical estimates we have that 
\[
\p_{\root0} \left( \left| Y_{t} \right| \geq C \sqrt{n} \text{ for some } t \leq c' n \right) \leq \eps / 4
\]
provided that $C$ is large enough. Putting everything together gives us~\eqref{eq:exit_side}. 
\end{proof}

With this lemma in hand we are now ready to prove 
Theorem~\ref{thm:comb}. 
We start with the upper bound and we again give two proofs, one using random walk and one using a greedy algorithm. 

\begin{proof}[Proof of the upper bound of Theorem~\ref{thm:comb} using random walk]
Fix $\eps \in \left( 0,  1 - p \right)$, let $C = C \left( \eps \right)$ be the constant given by Lemma~\ref{lem:comb_rectangle_teeth}, and let $R := R_{C,n}$. 
Just like in the proof of Theorem~\ref{thm:volumeexittime}, we define a sequence of toppling moves $\mu_{0} := \delta_{\root0}, \mu_{1}, \mu_{2}, \dots$ that simulate the killed random walk $\left\{ Z_{t} \right\}_{t \geq 0}$, 
i.e., for every nonnegative integer $t$, the distribution $\mu_{t \left| R \right|}$ agrees with the distribution of~$Z_{t}$. 

Let 
\[
M := \inf \left\{ i \geq 0 : \mu_{i \left| R \right|} \left( R \right) \leq 1 - p - \eps  \right\}
\]
denote the first time that the distribution of the killed random walk has mass at least $p + \eps$ outside of the rectangle $R$. 
By Lemma~\ref{lem:comb_rectangle_teeth} we have that 
\[
\mu_{M \left|R\right|} \left( \left\{ \left( - C \sqrt{n} - 1, 0 \right) \right\} \cup \left\{ \left( C \sqrt{n} + 1, 0 \right) \right\} \right) 
\leq \p_{\root0} \left( X_{T_{R}}^{\left( 2 \right)} = 0 \right) \leq \eps,
\]
i.e., there is mass at most $\eps$ that is not at the ``ends of the teeth'' of $R$. Since every other vertex in the support of $\mu_{M \left| R \right|}$ that is outside of $R$ has distance at least $n$ from the origin, it follows that $\mu_{M \left| R \right|} \left( B_{n} \right) \leq 1 - p$, which implies that 
\[
 N_{p} \left( \mathbb{C}_{2}, B_{n}, \delta_{\root0} \right) \leq M \left| R \right|.
\]
Just like in the proof of Theorem~\ref{thm:volumeexittime}, one can show that 
\[
 M < \left( 1 - p - \eps \right)^{-1} \mathbb{E}_{\root0} \left[ T_{R} \right].
\]
The upper bound now follows by putting together the previous two displays and using the facts that 
$\left| R \right| = \Theta \left( n^{3/2} \right)$ 
and 
$\mathbb{E}_{\root0} \left[ T_{R} \right] = \Theta \left( n^{2} \right)$. 
\end{proof}

\begin{proof}[Proof of the upper bound of Theorem~\ref{thm:comb} using a greedy algorithm]
Fix $\eps \in \left( 0,  1 - p \right)$, let $C = C \left( \eps \right)$ be the constant given by Lemma~\ref{lem:comb_rectangle_teeth}, and let $R := R_{C,n}$. 
Consider the following greedy algorithm for choosing toppling moves: 
until the mass outside of $R$ is at least $p + \eps$, 
choose $v \in R$ with the largest mass in $R$ (break ties arbitrarily) 
and topple the full mass at $v$. 
Let 
$\mu_{0} \equiv \delta_{\root0}, \mu_{1}, \mu_{2}, \dots$ 
denote the resulting mass distributions, 
let $v_{i}$ denote the vertex that was toppled to get from $\mu_{i-1}$ to $\mu_{i}$, 
and let $m_{i}$ denote the mass that was toppled at this step. 
Furthermore, let $t$ denote the number of moves necessary for this greedy algorithm to transport mass $p + \eps$ outside of $R$, 
i.e., 
$t = \min \left\{ i \geq 1 : \mu_{i} \left( R \right) \leq 1 - p - \eps \right\}$. 

Just as in the proof of Theorem~\ref{thm:greedy_ub_Zd} we can compute how the second moment of the mass distribution changes after each toppling move and we obtain that 
\[
 M_{2} \left[ \mu_{i} \right] - M_{2} \left[ \mu_{i-1} \right] = m_{i}. 
\]
The greedy choice implies that for every $i \leq t$ we must have that 
\[
 m_{i} \geq \frac{\mu_{i-1} \left( R \right)}{\left| R \right|} > \frac{1-p-\eps}{\left| R \right|}. 
\]
This gives us the following lower bound on the second moment of $\mu_{t}$: 
\[
M_{2} \left[ \mu_{t} \right] 
= \sum_{i=1}^{t} \left( M_{2} \left[ \mu_{i} \right] - M_{2} \left[ \mu_{i-1} \right] \right) 
= \sum_{i=1}^{t} m_{i} 
> t \times \frac{1-p-\eps}{\left| R \right|}.
\]
On the other hand, 
there exists a constant $C' < \infty$ such that 
$\left\| v \right\|_{2}^{2} \leq C' n^{2}$ for every $v \in \mathbb{C}_{2}$ such that $\mu_{t} \left( v \right) > 0$, 
which implies that 
$M_{2} \left[ \mu_{t} \right] \leq C' n^{2}$. 
Combining this with the display above 
we obtain that 
$t < C' n^{2} \times \left| R \right| / \left( 1 - p - \eps \right)$. 
Since $\left| R \right| = \Theta \left( n^{3/2} \right)$ 
we thus have that 
$t = O \left( n^{7/2} \right)$. 

What remains to show is that the mass distribution $\mu_{t}$ has mass at least $p$ at distance at least $n$ from the origin, i.e., that $\mu_{t} \left( B_{n} \right) \leq 1 - p$. 
Note that there are only two vertices in the vertex boundary of $R$ that are at distance less than $n$ from the origin: 
$\left( -C \sqrt{n} -1, 0 \right)$ and $\left( C \sqrt{n} + 1, 0 \right)$. 
Thus we have that 
\[
 \mu_{t} \left( B_{n} \right) 
 \leq \mu_{t} \left( R \right) + \mu_{t} \left( \left( -C \sqrt{n} -1, 0 \right) \right) + \mu_{t} \left( \left( C \sqrt{n} + 1, 0 \right) \right), 
\]
and since $\mu_{t} \left( R \right) \leq 1 - p - \eps$, what remains to show is that 
\begin{equation}\label{eq:mass_side_eps}
\mu_{t} \left( \left( -C \sqrt{n} -1, 0 \right) \right) + \mu_{t} \left( \left( C \sqrt{n} + 1, 0 \right) \right) \leq \eps. 
\end{equation}
For $x \in \mathbb{C}_{2}$ let 
$h \left( x \right) := \p_{x} \left( X_{T_{R}}^{(2)} = 0 \right)$. 
By Lemma~\ref{lem:comb_rectangle_teeth} we have that 
$h \left( \root0 \right) \leq \eps$, 
and hence 
$\sum_{x \in \mathbb{C}_{2}} h \left( x \right) \mu_{0} \left( x \right) \leq \eps$. 
The function $h$ is harmonic in $R$, which implies that 
$\sum_{x \in \mathbb{C}_{2}} h \left( x \right) \mu_{i} \left( x \right) 
= \sum_{x \in \mathbb{C}_{2}} h \left( x \right) \mu_{i-1} \left( x \right)$ 
for every $i \geq 1$, 
and hence 
$\sum_{x \in \mathbb{C}_{2}} h \left( x \right) \mu_{t} \left( x \right) \leq \eps$. 
The inequality~\ref{eq:mass_side_eps} then immediately follows from the fact that 
$h \left( \left( -C \sqrt{n} -1, 0 \right) \right) = h \left( \left( C \sqrt{n} + 1, 0 \right) \right) = 1$. 
\end{proof}

\begin{proof}[Proof of the lower bound of Theorem~\ref{thm:comb}]
Given $p \in \left( 0, 1 \right)$, let $\eps := p / 4$. 
In the following we fix $c = c \left( \eps \right)$ and $C = C \left( \eps \right)$ to be large enough constants; 
we shall see soon the specific criterion for choosing these constants. 

We start by smoothing the initial mass distribution appropriately. Define the rectangle 
$R' \equiv R'_{C,n} := \left[ - C \sqrt{n}, C \sqrt{n} \right] \times \left( - n / 2 , n / 2 \right)$ 
and let 
$Z'_{t} := X_{t \wedge T_{R'}}$ denote the random walk killed when it exits $R'$. 
Starting with the initial mass distribution $\delta_{\root0}$, we apply a sequence of 
$c n^{2} \times \mathrm{Vol} \left( R' \right)$ 
toppling moves that simulate $c n^{2}$ steps of the killed random walk $\left\{ Z'_{t} \right\}_{t \geq 0}$, 
to arrive at a new mass distribution $\mu$. 
In the same way as in the proof of Lemma~\ref{lem:comb_rectangle_teeth}, we can argue that most of the mass of the resulting measure $\mu$ is on the ``ends of the teeth'', i.e., it is on the set 
\[
S := \left\{ \left( i, \pm \frac{n}{2} \right) : \left| i \right| \leq C \sqrt{n} \right\}.
\] 
More precisely, if $c$ and $C$ are chosen appropriately, then $\mu \left( S \right) \geq 1 - \eps$. 
Furthermore, most of the mass is roughly uniformly spread on $S$.  
Specifically, we claim that there exists a constant $K$ such that 
we can write the mass measure $\mu$ restricted to $S$ as the sum of two mass measures, 
$\mu|_{S} = \mu_{1} + \mu_{2}$,  
such that 
\begin{equation}\label{eq:comb_spread}
\mu_{1} \left( x \right) \leq \frac{K}{\sqrt{n}}, \quad \forall x \in S, \qquad \textrm{and } \qquad \mu_{2} \left( S \right) \leq \eps.
\end{equation}

Before proving~\eqref{eq:comb_spread}, we show how to conclude the proof assuming that~\eqref{eq:comb_spread} holds. 
First of all, from Corollary~\ref{cor:smoothing} we have that 
$N_{p} \left( \mathbb{C}_{2}, B_{n}, \delta_{\root0} \right) \geq N_{p} \left( \mathbb{C}_{2}, B_{n}, \mu \right)$, 
so it suffices to bound from below this latter quantity. 
Now suppose that a sequence of toppling moves takes $\mu$ to a mass distribution $\mu'$ satisfying $\mu' \left( B_{n} \right) \leq 1 - p$, 
and for $x \in S$ let 
$\nu \left( x \right) \in \left[ 0, \mu \left( x \right) \right]$ 
denote the amount of mass that was originally (under $\mu$) at $x$, but through the toppling moves was transported outside of $B_{n}$. 
We can write 
$\nu \left( x \right) = \nu_{1} \left( x \right)  + \nu_{2} \left( x \right)$ 
in accordance with how we have 
$\mu \left( x \right) = \mu_{1} \left( x \right)  + \mu_{2} \left( x \right)$. 
Since $\mu \left( S \right) \geq 1 - \eps$ and $\mu' \left( B_{n} \right) \leq 1 - p$, we must have that 
\begin{equation}\label{eq:sum_nu}
\sum_{x \in S} \nu \left( x \right) \geq p - \eps. 
\end{equation}
Since 
$\nu_{2} \left( S \right) \leq \mu_{2} \left( S \right) \leq \eps$, 
we must then have that 
\begin{equation}\label{eq:sum_nu1}
\sum_{x \in S} \nu_{1} \left( x \right) \geq p - 2\eps. 
\end{equation}
Let $S_{\mathrm{lg}} := \left\{ x \in S \, : \, \nu_{1} \left( x \right) \geq \eps / ( 5 C \sqrt{n} ) \right\}$ 
and $S_{\mathrm{sm}} := S \setminus S_{\mathrm{lg}}$, 
and break the sum in~\eqref{eq:sum_nu1} into two parts accordingly. 
Using that 
$\left| S \right| = 4C \sqrt{n} + 2 \leq 5C \sqrt{n}$, 
we have that 
$\sum_{x \in S_{\mathrm{sm}}} \nu_{1} \left( x \right) \leq \eps$, 
and so 
\[
 \sum_{x \in S_{\mathrm{lg}}} \nu_{1} \left( x \right) \geq p - 3 \eps = p/4. 
\]
On the other hand,~\eqref{eq:comb_spread} implies that 
\[
 \sum_{x \in S_{\mathrm{lg}}} \nu_{1} \left( x \right) \leq \left| S_{\mathrm{lg}} \right| \times \frac{K}{\sqrt{n}} 
\]
and so we must have that 
$\left| S_{\mathrm{lg}} \right| \geq \frac{p}{4K} \sqrt{n}$. 
Notice that for every $x \in S_{\mathrm{lg}}$ we have that 
$\nu_{1} \left( x \right) / \mu_{1} \left( x \right) \geq \eps / (5CK)$, 
i.e., a constant fraction of the mass at $x$ (under $\mu_{1}$) is transported outside of $B_{n}$. 
In order to transport mass from $x = \left( x_{1}, x_{2} \right) \in S$ to outside of $B_{n}$, 
the mass necessarily has to go through 
either $\left( x_{1}, x_{2} + n/4 \right)$ 
or $\left( x_{1}, x_{2} - n/4 \right)$. 
Since the graph between these two points is a line of length $\Omega \left( n \right)$, 
we know from Theorem~\ref{thm:zd} for $d = 1$ (proven in~\cite{overhang}) that
$\Omega \left( n^{3} \right)$ 
toppling moves are necessary to do this. 
Since this holds for every $x \in S_{\mathrm{lg}}$, 
we see that $\Omega \left( n^{7/2} \right)$ toppling moves are necessary altogether.

Finally, we turn back to proving~\eqref{eq:comb_spread}. 
First, note that there exists $\delta = \delta \left( \eps \right)$ such that 
with probability at least $1 - \eps / 2$, 
the killed random walk $\left\{ Z_{t}' \right\}_{t \geq 0}$ has not exited the rectangle $R'$ by time $\delta n^{2}$ (this follows by classical estimates for simple random walk, see, e.g.,~\cite{revesz2005random}). 
On this event, which we shall denote by $\mathcal{A}$, the killed random walk $\left\{ Z_{t}' \right\}_{t = 0}^{\delta n^{2}}$ and the simple random walk $\left\{ X_{t} \right\}_{t=0}^{\delta n^{2}}$ agree. 
Now let $N_{t}^{Z'}$ denote the number of visits to the $x$ axis of the killed random walk until time $t$, i.e., 
\[
 N_{t}^{Z'} := \# \left\{ k \in \left\{ 0, 1, \dots, t \right\} : {Z'}_{k}^{(2)} = 0 \right\},
\]
and similarly define $N_{t}^{X}$ for the simple random walk. 
Under the event $\mathcal{A}$, we have that 
\[
 N_{\delta n^{2}}^{Z'} = N_{\delta n^{2}}^{X}.
\]
By classical estimates on the local time at $0$ (see, e.g.,~\cite[Theorem~9.11]{revesz2005random}), 
there exists $\gamma = \gamma \left( \eps \right)$ 
such that with probability at least $1 - \eps / 2$, 
we have that 
\begin{equation}\label{eq:local_time_lb}
N_{\delta n^{2}}^{X} \geq \gamma n. 
\end{equation}
Denote by $\mathcal{B}$ the event that the inequality in~\eqref{eq:local_time_lb} holds 
and note that $\p \left( \mathcal{A} \cap \mathcal{B} \right) \geq 1 - \eps$. 
In the following we assume that the event $\mathcal{A} \cap \mathcal{B}$ holds; 
whatever happens on the event $\left( \mathcal{A} \cap \mathcal{B} \right)^{c}$ we put into the mass measure $\mu_{2}$, which hence has mass at most $\eps$.

Under the event $\mathcal{A} \cap \mathcal{B}$ we have that 
$N := N_{c n^{2}}^{Z'} \geq N_{\delta n^{2}}^{Z'} = N_{\delta n^{2}}^{X} \geq \gamma n$. 
Let $\left\{ Y_{t} \right\}_{t \geq 0}$ denote a lazy random walk on $\mathbb{Z}$ that stays put with probability $1/2$, and otherwise does a step according to simple random walk, just like in the proof of Lemma~\ref{lem:comb_rectangle_teeth}. 
Conditioned on $N$, we have that ${Z'}_{c n^{2}}^{(1)}$ has the same distribution as $Y_{N}$. For fixed $N$, the local limit theorem says that there exists $K'$ such that 
\[
\sup_{\ell \in \mathbb{Z}} \p \left( Y_{N} = \ell \right) \leq \frac{K'}{\sqrt{N}}. 
\]
Hence there exists $K$ such that 
\[
\sup_{\ell \in \mathbb{Z}} \p \left( {Z'}_{c n^{2}}^{(1)} = \ell \, \middle| \, \mathcal{A} \cap \mathcal{B} \right) \leq \frac{K}{\sqrt{n}},
\]
which implies the claim. 
\end{proof}

\section{Graphs where random walk has positive speed} \label{sec:proofs_pos_speed-entropy} 

In this section we study graphs on which simple random walk has positive speed. 
As a warm-up, we study $d$-ary trees in Section~\ref{sec:proofs_dary_trees}, followed by general results in Section~\ref{sec:proofs_pos_speed_general}. 
We then apply the general results to two examples: Galton-Watson trees (Section~\ref{sec:proofs_gw_trees}) and product of trees (Section~\ref{sec:proofs_product_trees}). 
The main observation for these latter results is that 
in these cases one can a priori specify an exponentially small subset of the vertices of the ball of radius $n$ 
with the property that the random walk on the graph started from the center of the ball does not exit this subset with probability close to $1$. 
Thus simple random walk can be simulated approximately by performing toppling moves only on this exponentially small subset of $B_{n}$, leading to much better bounds than the general upper bound of Theorem~\ref{thm:volumeexittime}.

\subsection{$d$-ary trees} \label{sec:proofs_dary_trees} 

The general upper bound of Theorem~\ref{thm:volumeexittime} applied directly to the $d$-ary tree $\mathbb{T}_{d}$ gives 
\[
 N_{p} \left( \mathbb{T}_{d}, B_{n}, \delta_{\rootrho} \right) < C \left( 1 - p \right)^{-1} \cdot n \cdot d^{n}
\]
for some constant $C < \infty$, since $\mathrm{Vol} \left( B_{n} \right) = \Theta \left( d^{n} \right)$ and $\mathbb{E}_{\rootrho} \left[ T_{B_{n}} \right] = \Theta \left( n \right)$. 
However, this bound is not tight, as Theorem~\ref{thm:d-ary} states that 
$N_{p} \left( \mathbb{T}_{d}, B_{n}, \delta_{\rootrho} \right) = \Theta \left( d^{n} \right)$. 
This example is interesting because the factor coming from the exit time of the random walk is completely absent from $N_{p} \left( \mathbb{T}_{d}, B_{n}, \delta_{\rootrho} \right)$. 
The proof requires a more careful analysis of the greedy algorithm. 

In the rest of this subsection we prove Theorem~\ref{thm:d-ary}, starting with the lower bound. 
We define the level of a vertex $v \in \mathbb{T}_{d}$ to be its distance from the root: 
$\ell \left( v \right) := d_{\mathbb{T}_{d}} \left( v, \rootrho \right)$. 

\begin{proof}[Proof of the lower bound in~Theorem~\ref{thm:d-ary}]
We begin by smoothing the initial mass distribution in such a way that most of the mass is on the vertices at level $n-1$, where it is uniformly spread. 
More precisely, for any $\eps > 0$ it is possible to obtain, via a finite sequence of toppling moves, a mass distribution $\mu$ such that 
$\mu \left( v \right) \in \left( \left( 1 - \eps \right) d^{-(n-1)}, d^{-(n-1)} \right)$ 
for every vertex $v$ at level $n - 1$. 
By Corollary~\ref{cor:smoothing} we have that $N_{p} \left( \mathbb{T}_{d}, B_{n}, \delta_{\rootrho} \right) \geq N_{p} \left( \mathbb{T}_{d}, B_{n}, \mu \right)$, so it suffices to bound from below this latter quantity. 

Fix $\eps \in \left( 0, p \right)$. In order to transport mass at least $p$ to level $n$ starting from $\mu$, it is necessary to transport mass at least $p - \eps$ to level $n$ from the vertices at level $n-1$. 
However, each vertex at level $n-1$ has mass at most $d^{-(n-1)}$. Hence mass from at least $(p - \eps) d^{n-1}$ vertices at level $n-1$ needs to transported to level $n$, and this requires at least $(p - \eps) d^{n-1}$ toppling moves. 
Hence $N_{p} \left( \mathbb{T}_{d}, B_{n}, \mu \right) \geq (p-\eps) d^{n-1}$. 
\end{proof}

The greedy algorithm provides an upper bound of the correct order. 
In order to analyze it we study the average level of a mass distribution $\mu$, defined as 
\[
M_{1} \left[ \mu \right] := \sum_{v \in \mathbb{T}_{d}} \mu \left( v \right) \ell \left( v \right). 
\]
We will make use of the following lemma, which states that if the average level is not too large, then there must be a reasonably large mass at some vertex. 
\begin{lemma}\label{lem:existence_of_large_mass}
If $\mu$ is a mass distribution on $\mathbb{T}_{d}$ such that 
$M_{1} \left[ \mu \right] \leq \ell$, 
then there exists $v \in \mathbb{T}_{d}$ such that 
$\ell \left( v \right) \leq \ell$ and 
$\mu \left( v \right) \geq d^{-(\ell +1)} / 4$. 
\end{lemma}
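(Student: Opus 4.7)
The plan is to argue by contradiction. Suppose toward a contradiction that $\mu(v) < d^{-(\ell+1)}/4$ holds for every vertex $v$ with $\ell(v) \leq \ell$. Let $a_k := \sum_{v \,:\, \ell(v) = k} \mu(v)$ denote the total mass at level $k$, and note that $\sum_{k \geq 0} a_k = 1$ since toppling moves preserve total mass. Since the $d$-ary tree $\mathbb{T}_d$ has exactly $d^k$ vertices at level $k$, the contradiction assumption yields the pointwise upper bound $a_k < d^k \cdot d^{-(\ell+1)}/4 = d^{k-\ell-1}/4$ for each integer $k \in \{0, 1, \ldots, \ell\}$. The goal from here is to derive $M_1[\mu] > \ell$.

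The key observation is the identity (obtained using $\sum_k a_k = 1$)
\begin{equation*}
M_1[\mu] \;=\; \sum_{k \geq 0} k\, a_k \;=\; (\ell+1) - \sum_{k \geq 0} (\ell+1-k)\, a_k .
\end{equation*}
Since $(\ell+1-k) \leq 0$ for every $k \geq \ell+1$, dropping these nonpositive terms gives $\sum_{k \geq 0}(\ell+1-k)\, a_k \leq \sum_{k=0}^{\ell}(\ell+1-k)\, a_k$. Combining this with the pointwise bound on $a_k$ and the substitution $m = \ell - k$,
\begin{equation*}
\sum_{k=0}^{\ell}(\ell+1-k)\, a_k \;<\; \frac{1}{4}\sum_{k=0}^{\ell}(\ell+1-k)\, d^{k-\ell-1} \;=\; \frac{1}{4}\sum_{m=0}^{\ell}(m+1)\, d^{-(m+1)} \;\leq\; \frac{1}{4}\cdot\frac{d}{(d-1)^2} \;\leq\; \frac{1}{2},
\end{equation*}
where the penultimate step uses the closed form $\sum_{m=0}^{\infty}(m+1)x^{m+1} = x/(1-x)^2$ evaluated at $x = 1/d$, and the last step uses $d \geq 2$. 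Plugging back into the identity yields $M_1[\mu] > (\ell+1) - 1/2 > \ell$, contradicting the hypothesis.

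The conceptual pitfall I anticipate (and which the above argument is designed to avoid) is applying a naive Markov-type bound: if one only controls the total mass at levels $> \ell$ via $\sum_{k > \ell} a_k > 1 - 1/(4(d-1))$, one obtains $M_1[\mu] > (\ell+1)\bigl(1 - 1/(4(d-1))\bigr)$, which for $d = 2$ is only $3(\ell+1)/4$ and therefore contradicts $M_1[\mu] \leq \ell$ only when $\ell < 3$. The point is that under the contradiction assumption the mass at low levels $0 \leq k \leq \ell$ is also highly constrained (forced to grow geometrically and concentrated near $k = \ell$), and its contribution to $M_1[\mu]$ must be accounted for. The identity $M_1[\mu] = (\ell+1) - \sum_k (\ell+1-k)\,a_k$ packages both contributions simultaneously via the weights $(\ell+1-k)$, and the convergence of the geometric series $\sum (m+1) d^{-(m+1)}$ for $d \geq 2$ is what ultimately closes the gap.
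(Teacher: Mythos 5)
Your proof is correct and follows essentially the same strategy as the paper's: same contradiction setup, same key identity $M_1[\mu] \geq (\ell+1) - \sum_{k=0}^{\ell}(\ell+1-k)a_k$ obtained by truncating the high levels, and the same final bound of $1/2$ on the correction term (you get it via the closed form $\sum_{m\ge 0}(m+1)d^{-(m+1)} = d/(d-1)^2 \le 2$, the paper by directly estimating $\sum_{k=0}^{\ell}(\ell+1-k)d^{k} \le 2d^{\ell+1}$, which is the same computation after clearing the factor $d^{\ell+1}$). The only difference is a cosmetic reindexing.
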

\begin{proof}
We prove the statement by contradiction. 
Suppose that $\mu \left( v \right) < d^{-(\ell + 1)} / 4$ for every $v \in \mathbb{T}_{d}$ such that $\ell \left( v \right) \leq \ell$; our goal is to show that then $M_{1} \left[ \mu \right] > \ell$. 
To bound $M_{1} \left[ \mu \right]$ from below, we can first bound $\ell \left( v \right)$ by $\ell + 1$ for every $v$ such that $\ell \left( v \right) \geq \ell + 1$ to obtain that 
\begin{align*}
M_{1} \left[ \mu \right] 
&\geq \sum_{v : \ell \left( v \right) \leq \ell} \mu \left( v \right) \ell \left( v \right) + \left( \ell + 1 \right) \left( 1 - \sum_{v : \ell \left( v \right) \leq \ell} \mu \left( v \right) \right) \\
&= \ell + 1 - \sum_{v : \ell \left( v \right) \leq \ell} \mu \left( v \right) \left( \ell + 1 - \ell \left( v \right) \right). 
\end{align*}
Using the assumption that $\mu \left( v \right) < d^{-(\ell + 1)} / 4$ for every $v \in \mathbb{T}_{d}$ such that $\ell \left( v \right) \leq \ell$, we thus have that 
\[
M_{1} \left[ \mu \right] \geq \ell + 1 - \frac{1}{4} d^{- (\ell + 1)} \sum_{v : \ell \left( v \right) \leq \ell} \left( \ell + 1 - \ell \left( v \right) \right). 
\]
Finally, we have that 
\[
\sum_{v : \ell \left( v \right) \leq \ell} \left( \ell + 1 - \ell \left( v \right) \right) = \sum_{k = 0}^{\ell} \left( \ell + 1 - k \right) d^{k} 
= \frac{1}{d-1} \left[ d \cdot \frac{d^{\ell+1} - 1}{d-1} - \left( \ell + 1 \right) \right] \leq 2 d^{\ell + 1},
\]
and so $M_{1} \left[ \mu \right] \geq \ell + 1/2$. 
\end{proof}

\begin{proof}[Proof of the upper bound in~Theorem~\ref{thm:d-ary}]
Consider the following greedy algorithm for choosing toppling moves: until the mass outside of $B_{n}$ is at least $p$, choose $v \in B_{n}$ with the largest mass in $B_{n}$ (break ties arbitrarily) and topple the full mass at $v$. 
Let $\mu_{0} \equiv \delta_{\rootrho}, \mu_{1}, \mu_{2}, \dots$ denote the resulting mass distributions, 
let $v_{i}$ denote the vertex that was toppled to get from $\mu_{i-1}$ to $\mu_{i}$, 
and let $m_{i}$ denote the mass that was toppled at this step. 
Let $t$ denote the number of moves necessary for this greedy algorithm to transport mass $p$ to distance at least $n$ from the root, 
i.e., $t = \min \left\{ i \geq 0 : \mu_{i} \left( B_{n} \right) \leq 1 - p \right\}$. 
Finally, for every $\ell \in \mathbb{N}$, let $t_{\ell}$ denote the number of moves necessary for this greedy algorithm to make the average level of the mass distribution at least $\ell$, 
i.e., $t_{\ell} := \min \left\{ i \geq 0 : M_{1} \left[ \mu_{i} \right] \geq \ell \right\}$. 

We first consider how the average level of the mass distribution changes with each toppling move. 
If $v_{i} = \rootrho$, then all the mass goes to the first level and hence we have that $M_{1} \left[ \mu_{i} \right] - M_{1} \left[ \mu_{i-1} \right] = m_{i}$. 
If $v_{i} \neq \rootrho$, then a $1/(d+1)$ fraction of the mass goes one level lower, while the rest of the mass goes one level higher, so 
$M_{1} \left[ \mu_{i} \right] - M_{1} \left[ \mu_{i-1} \right] = \tfrac{d-1}{d+1} m_{i}$. 
In every case we have that 
\[
M_{1} \left[ \mu_{i} \right] - M_{1} \left[ \mu_{i-1} \right] \geq \frac{d-1}{d+1} m_{i}. 
\]

Now fix $\ell < n$. By Lemma~\ref{lem:existence_of_large_mass}, for every $i < t_{\ell}$ we have that 
$m_{i} \geq d^{-(\ell+1)} / 4$. This implies that 
\[
M_{1} \left[ \mu_{t_{\ell}-1} \right] - M_{1} \left[ \mu_{t_{\ell-1}} \right] \geq \left( t_{\ell} - 1 - t_{\ell - 1} \right) \times \frac{d-1}{d+1} \times \frac{1}{4 d^{\ell + 1}}. 
\]
On the other hand, by the definition of $t_{\ell}$ we have that 
\[
M_{1} \left[ \mu_{t_{\ell}-1} \right] - M_{1} \left[ \mu_{t_{\ell-1}} \right] < \ell - \left( \ell - 1 \right) = 1. 
\]
Putting the previous displays together we obtain that 
\begin{equation}\label{eq:t_ell}
 t_{\ell} - t_{\ell-1} = O \left( d^{\ell} \right)
\end{equation}
for every $\ell < n$, where the implied constant depends only on $d$. Summing~\eqref{eq:t_ell} over $\ell$ from $1$ to $n-1$ we obtain that 
\[
t_{n-1} = O \left( d^{n} \right).
\]
Thus what remains is to show that 
$t - t_{n-1} = O \left( d^{n} \right)$. 
Recall that for every $i < t$ we have that 
$\mu_{i} \left( B_{n} \right) > 1 - p$. 
Since $\mathrm{Vol} \left( B_{n} \right) < d^n$, 
there must exist $v \in B_{n}$ such that 
$\mu_{i} \left( v \right) > \left( 1 - p \right) / d^{n}$. 
Hence for every $i \in (t_{n-1}, t]$ we have that $m_{i} > \left( 1 - p \right) / d^{n}$. 
Thus 
\[
 M_{1} \left[ \mu_{t} \right] - M_{1} \left[ \mu_{t_{n-1}} \right] > \left( t - t_{n-1} \right) \frac{d-1}{d+1} \left( 1 - p \right) / d^{n}. 
\]
On the other hand, since the support of $\mu_{t}$ is contained in $B_{n+1}$, we have that $M_{1} \left[ \mu_{t} \right] \leq n$, so 
\[
 M_{1} \left[ \mu_{t} \right] - M_{1} \left[ \mu_{t_{n-1}} \right] \leq n - \left( n - 1 \right) = 1. 
\]
Putting the previous two displays together we obtain that 
$t - t_{n-1} < \left( 1 - p \right)^{-1} \frac{d+1}{d-1} d^{n}$. 
\end{proof}

\subsection{A general bound for graphs where random walk has positive speed and entropy} \label{sec:proofs_pos_speed_general} 

In this subsection we present a general result for graphs where simple random walk has positive speed and entropy. 
Let $G = \left( V, E \right)$ be an infinite, connected, locally finite graph with $\rootv \in V$ a specified vertex, 
and let $\left\{ X_{t} \right\}_{t \geq 0}$ denote simple random walk on $G$ started from $\rootv$, i.e., with $X_{0} = \rootv$. 
We denote by $p_{t} \left( \cdot, \cdot \right)$ the $t$ step probability transition kernel. 
We start by introducing the basic notions of speed and entropy for random walk.

\begin{definition}\label{def:speed}
The (asymptotic) \emph{speed} of the random walk $\left\{ X_{t} \right\}_{t \geq 0}$ on $G$ is defined as 
\[
 \boldsymbol{\ell} := \lim_{t \to \infty} \frac{d \left( X_{0}, X_{t} \right)}{t}. 
\]
\end{definition}

Note that the triangle inequality implies subadditivity, that is, 
$d \left( X_{0}, X_{s+t} \right) \leq d \left( X_{0}, X_{s} \right) + d \left( X_{s}, X_{s+t} \right)$, 
and hence the speed of the random walk exists almost surely by Kingman's subadditive ergodic theorem (see, e.g.,~\cite[Theorem~14.44]{LP:book}). 

Recall that the entropy of a discrete random variable $X$ taking values in $\mathcal{X}$ is defined as 
\[
H \left( X \right)  = - \sum_{x \in \mathcal{X}} \p \left( X = x \right) \log \p \left( X = x \right),  
\] 
where in this paper we use $\log$ to denote the natural logarithm. 

\begin{definition}\label{def:entropy}
The \emph{asymptotic entropy}, also known as the \emph{Avez entropy}, of the random walk $\left\{ X_{t} \right\}_{t \geq 0}$ on $G$ is defined as 
\[
 \boldsymbol{h} := \lim_{t \to \infty} \frac{H \left( X_{t} \right)}{t}, 
\]
provided that this limit exists. 
\end{definition}

When $G$ is transitive, the sequence $\left\{ H \left( X_{t} \right) \right\}_{t \geq 0}$ is subadditive, and hence the Avez entropy exists by Fekete's lemma (see, e.g.,~\cite[Section~14.1]{LP:book}).

We recall two results concerning the asymptotic speed and the Avez entropy of the random walk. 
First, the positivity of these two quantities are related, as stated in the following theorem. 

\begin{theorem}\label{thm:pos_speed_entropy}[\cite{kaimanovich1983random},~\cite[Theorem~14.1]{LP:book}]
Let $G$ be a Cayley graph. 
Then the random walk has positive asymptotic speed, i.e., $\boldsymbol{\ell} > 0$, 
if and only if 
the Avez entropy of the random walk is positive, i.e., $\boldsymbol{h} > 0$. 
\end{theorem}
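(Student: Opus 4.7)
The plan is to prove the two implications separately, after fixing a symmetric generating set $S$ of $\Gamma$ with $\Delta := |S|$ so that $G$ is the corresponding Cayley graph and simple random walk has uniform stationary measure. Both halves rest on the identity $H(X_n) = \E\!\left[-\log p_n(\rootv, X_n)\right]$, combined with a classical inequality that converts geometric information about the walk into an entropy estimate.

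For $\boldsymbol{\ell} > 0 \Rightarrow \boldsymbol{h} > 0$, I would invoke the Varopoulos--Carne heat kernel bound on reversible graphs, which on $G$ reads $p_n(\rootv, g) \leq 2 \exp\!\left(-d(\rootv,g)^2/(2n)\right)$ for every $g \in \Gamma$ and every $n \geq 1$. Substituting into the entropy identity and applying Jensen's inequality yields $H(X_n) \geq \left(\E\!\left[d(\rootv, X_n)\right]\right)^2 / (2n) - \log 2$. Since $d(\rootv, X_n)/n \to \boldsymbol{\ell}$ almost surely and is bounded by $1$, bounded convergence gives $\E\!\left[d(\rootv, X_n)\right]/n \to \boldsymbol{\ell}$; dividing by $n$ and passing to the limit then yields $\boldsymbol{h} \geq \boldsymbol{\ell}^2/2 > 0$.

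For the reverse direction I would argue the contrapositive: if $\boldsymbol{\ell} = 0$, then $\boldsymbol{h} = 0$. Writing $X_n$ as a product of i.i.d.\ increments drawn uniformly from $S$, the random variable $D_n := d(\rootv, X_n)$ is a function of these increments with bounded differences of at most $2$ (by left-invariance of the word metric), so McDiarmid's inequality gives $\p(D_n \geq \E D_n + t n) \leq \exp(-t^2 n / 2)$. Since $\E D_n / n \to 0$ (bounded convergence and $\boldsymbol{\ell} = 0$), for every fixed $\eps > 0$ and all sufficiently large $n$ we have $\p(D_n \geq \eps n) \leq e^{-c_\eps n}$ for some constant $c_\eps > 0$. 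Conditioning on the indicator $Y := \mathbf{1}\{X_n \in B_{\eps n}\}$, the entropy splits as $H(X_n) \leq H(Y) + H(X_n \mid Y) \leq \log 2 + \log|B_{\eps n}| + \p(Y=0) \log|B_n|$. Using the trivial volume bound $\log|B_r| \leq r \log \Delta + O(1)$, dividing by $n$, and sending $n \to \infty$ produces $\boldsymbol{h} \leq \eps \log \Delta$; since $\eps > 0$ was arbitrary, $\boldsymbol{h} = 0$.

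I expect the main subtleties to be (i) applying Varopoulos--Carne cleanly in the presence of possible periodicity of $G$, which is harmless because $p_n(\rootv, g)$ vanishes whenever $n$ and $d(\rootv, g)$ have the wrong parity so the exponential bound holds automatically in those cases, and (ii) ensuring that $\boldsymbol{\ell}$ is a deterministic almost sure limit so that the bounded-convergence step produces the correct constant; this follows from the Hewitt--Savage zero-one law applied to the i.i.d.\ increments, or equivalently from the Kingman subadditive ergodic theorem that underlies the definition of $\boldsymbol{\ell}$ to begin with.
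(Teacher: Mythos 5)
The paper does not prove this theorem: it cites it from Kaimanovich--Vershik and Lyons--Peres (the book's Theorem~14.1), so there is no in-paper argument to compare against. Your forward direction ($\boldsymbol{\ell} > 0 \Rightarrow \boldsymbol{h} > 0$) via Varopoulos--Carne and Jensen is correct and is the standard argument, yielding $\boldsymbol{h} \geq \boldsymbol{\ell}^2/2$.

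The reverse direction has a genuine gap: the claim that $D_n := d(\rootv, X_n)$, viewed as a function of the i.i.d.\ increments $g_1, \dots, g_n$, has bounded differences of at most $2$ is false for non-abelian groups. Replacing $g_i$ by $g_i'$ changes $X_n = A g_i B$ (with $A = g_1\cdots g_{i-1}$, $B = g_{i+1}\cdots g_n$) to $A g_i' B$, and $|D_n - D_n'| \leq d(Ag_iB, Ag_i'B) = \bigl| B^{-1} g_i^{-1} g_i' B \bigr|$, the word length of a \emph{conjugate} of $g_i^{-1}g_i'$, which can be as large as $2|B| + 2$. Concretely, in $F_2 = \langle a,b\rangle$ take $n=5$, $g_1=g_2=a$, $g_3=g_4=g_5=a^{-1}$, so $D_5 = |a^{-1}|=1$; changing $g_3$ to $b$ gives $D_5'=|a^2 b a^{-2}|=5$, a difference of $4$. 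The same conjugation obstruction kills the martingale-difference (Azuma) variant. Hence McDiarmid is inapplicable and the exponential bound $\p(D_n \geq \eps n)\leq e^{-c_\eps n}$ is unjustified.

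The fix is easy and costs nothing: the rest of your entropy estimate only requires $\p(D_n \geq \eps n)\to 0$, and this follows immediately from $D_n/n \to \boldsymbol{\ell}=0$ almost surely (Kingman), without any concentration inequality. With that substitution your argument goes through; equivalently, the standard ``fundamental inequality'' proof conditions directly on the value of $D_n$ to get $H(X_n)\leq H(D_n)+\E\bigl[\log|B_{D_n}|\bigr]\leq \log(n+1)+\E[D_n]\log\Delta+O(1)$, giving $\boldsymbol{h}\leq \boldsymbol{\ell}\log\Delta$ in one line and avoiding the issue entirely.
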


The following result is known as Shannon's theorem for random walks. 

\begin{theorem}\label{thm:Shannon}[\cite[Theorem~2.1]{kaimanovich1983random},~\cite[Theorem~14.10]{LP:book}]
Assume the setup described in the first paragraph of Section~\ref{sec:proofs_pos_speed_general} and in addition assume that $G$ is a transitive graph. Then we have that 
 \begin{equation*}
  \lim_{t \to \infty} \frac{1}{t} \log p_{t} \left( \rootv, X_{t} \right) = - \boldsymbol{h}
 \end{equation*}
almost surely. 
\end{theorem}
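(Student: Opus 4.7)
The plan is to recognize $\phi_t := -\log p_t(\rootv, X_t)$ as a subadditive cocycle over the time shift on random walk trajectories and then apply Kingman's subadditive ergodic theorem. This is the standard Kaimanovich--Vershik strategy. The desired conclusion then amounts to showing that the resulting a.s. limit equals the constant $\boldsymbol{h}$.

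First, I would establish the subadditive inequality. Chapman--Kolmogorov gives
\[
p_{s+t}(\rootv, X_{s+t}) \;=\; \sum_{z} p_s(\rootv, z)\, p_t(z, X_{s+t}) \;\geq\; p_s(\rootv, X_s)\, p_t(X_s, X_{s+t}),
\]
so taking $-\log$ yields $\phi_{s+t} \leq \phi_s + \bigl(-\log p_t(X_s, X_{s+t})\bigr)$. Because $G$ is transitive, there is an automorphism sending $X_s$ to $\rootv$ under which $p_t(X_s, X_{s+t})$ is identified with $p_t(\rootv, Y_t)$ for a random variable $Y_t$ that, conditionally on $X_s$, is an independent copy of $X_t$ (by the Markov property and automorphism invariance of the walk). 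Writing $\Theta_s$ for the natural shift on the trajectory space, this reads $\phi_{s+t} \leq \phi_s + \phi_t \circ \Theta_s$, a genuine subadditive cocycle over a measure-preserving system.

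Next, I would invoke Kingman's subadditive ergodic theorem. Integrability is straightforward: $\phi_t \geq 0$ and $\EE[\phi_t] = H(X_t) \leq t \log D$, where $D$ is the common degree of the transitive graph. Kingman's theorem then produces a shift-invariant limit $L$ with $\phi_t/t \to L$ almost surely and in $L^1$, and in particular $\EE[L] = \lim_t H(X_t)/t = \boldsymbol{h}$, where existence of this last limit follows from subadditivity of $H(X_t)$ and Fekete's lemma on a transitive graph. To upgrade $\EE[L] = \boldsymbol{h}$ to $L = \boldsymbol{h}$ almost surely, I would invoke ergodicity of the shift on trajectory space: on a Cayley graph this is Kolmogorov's $0$--$1$ law applied to the i.i.d. step sequence $g_i = X_{i-1}^{-1} X_i$, while on a general transitive graph one realizes the walk as a factor of a walk on a countable automorphism group and inherits tail triviality from there.

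The principal obstacle is this last ergodicity step. Subadditivity plus integrability only deliver convergence to \emph{some} invariant random variable, and matching expectations only pins down $\EE[L]$; identifying $L$ as a.s. constant genuinely uses the transitive (and in the cleanest treatment, group-theoretic) structure of $G$. A secondary technical point is verifying that the cocycle identity $\phi_{s+t} \leq \phi_s + \phi_t \circ \Theta_s$ holds as an identity of random variables on a \emph{measure-preserving} dynamical system, which again reduces to stationarity of the increment sequence under the shift; this stationarity is immediate once the walk is set up on the canonical trajectory space with its Markov shift.
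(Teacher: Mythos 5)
The paper does not prove this statement; it is quoted as a known result, citing Kaimanovich--Vershik and the Lyons--Peres book. Your proof sketch reproduces the standard argument used in those references, and the plan is sound: Chapman--Kolmogorov gives the pointwise bound $p_{s+t}(\rootv, X_{s+t}) \geq p_s(\rootv, X_s)\, p_t(X_s, X_{s+t})$, hence a subadditive cocycle; Kingman then gives a.s.\ and $L^1$ convergence of $\phi_t/t$ to a shift-invariant $L$; $L^1$ convergence together with $\E[\phi_t] = H(X_t)$ and Fekete forces $\E[L] = \boldsymbol{h}$; ergodicity upgrades this to $L = \boldsymbol{h}$ a.s.

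One point deserves more care than your ``secondary technical point'' framing suggests. The shift on the \emph{trajectory} space started from $\delta_{\rootv}$ is not measure-preserving on an infinite graph, since there is no stationary distribution. The cocycle and Kingman argument must be run on the increment space. For Cayley graphs this is immediate: $X_t = g_1 \cdots g_t$ with $g_i$ i.i.d., the Bernoulli shift on $(g_i)$ is measure-preserving and ergodic, and group invariance gives $p_t(X_s, X_{s+t}) = p_t(\rootv, g_{s+1}\cdots g_{s+t})$, which is exactly $\phi_t$ precomposed with the $s$-fold shift. For a general transitive graph $G$ one must choose a closed transitive subgroup $\Gamma \leq \mathrm{Aut}(G)$, lift the walk to a $\Gamma$-valued random walk that projects to simple random walk on $G$, and carry out the argument there; this lift is nontrivial (the relevant group need not be discrete and the construction of the lifted step distribution takes work). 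You acknowledge this in a sentence, which is fine for a sketch, but it is where the actual content of the transitivity hypothesis lives. With that caveat, the proof is correct and is the same route as the cited sources.
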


In the main result of this subsection, 
we provide sharp bounds in the exponent for the number of toppling moves necessary to transport mass $p$ to distance $n$ 
for graphs where simple random walk has positive asymptotic speed, positive Avez entropy, and which satisfy Shannon's theorem.

\begin{theorem}\label{thm:general_pos_speed}
Let $G = \left( V, E \right)$ be an infinite, connected, locally finite graph with $\rootv \in V$ a specified vertex, 
and let $\left\{ X_{t} \right\}_{t \geq 0}$ denote simple random walk on $G$ started from $\rootv$, i.e., with $X_{0} = \rootv$. 
Assume that the following three conditions hold: 
\begin{enumerate}
 \item\label{ass:speed} Simple random walk on $G$ has positive asymptotic speed, i.e., $\boldsymbol{\ell} > 0$. 
 \item\label{ass:entropy} Simple random walk on $G$ has positive Avez entropy, i.e., $\boldsymbol{h} > 0$. 
 \item\label{ass:conv} We have that 
 \begin{equation}\label{eq:conv_to_h}
  \lim_{t \to \infty} \frac{1}{t} \log p_{t} \left( \rootv, X_{t} \right) = - \boldsymbol{h}
 \end{equation}
 almost surely. 
\end{enumerate}
Then the minimum number of toppling moves needed to transport mass $p$ to distance at least $n$ from $\rootv$ is 
\begin{equation}\label{eq:general_pos_speed}
 N_{p} \left( G, B_{n}, \delta_{\rootv} \right) = \exp \left( n \times \frac{\boldsymbol{h}}{\boldsymbol{\ell}} \left( 1 + o \left( 1 \right) \right) \right).
\end{equation}
\end{theorem}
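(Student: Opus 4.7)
The approach exploits the probabilistic reformulation from the introduction: $N_p(G,B_n,\delta_\rootv)$ equals the minimum number of commands $(v_1,\ldots,v_N)$ that a blind controller must issue so that a particle starting at $\rootv$ and following the random walk $(Y_k)_{k \geq 0}$ one step at a time---but only when a command names its current location---exits $B_n$ with probability at least $p$. Let $\kappa$ denote the (random) number of successful commands; the particle ends at $Y_\kappa$, so both directions reduce to controlling $\kappa$ and $d(\rootv,Y_\kappa)$.

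\textbf{Upper bound.} Fix small $\eta>0$ and set $K^* := \lceil(1+\eta)n/\ell\rceil$. For $k \geq 0$ define
\[
S_k \;:=\; \bigl\{v \in V : p_k(\rootv,v) \geq e^{-k(h+\eta)}\bigr\},
\]
so that $|S_k| \leq e^{k(h+\eta)}$ automatically. Assumption~\ref{ass:conv} (Shannon's theorem) is an almost-sure statement, hence $\p_\rootv(\exists k \geq K_0 : Y_k \notin S_k) \to 0$ as $K_0 \to \infty$; choose $K_0=K_0(\eta)$ making this probability at most $\eta$, and for $k<K_0$ replace $S_k$ by $\mathrm{supp}\,p_k$ (finite by local finiteness). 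Form the command sequence $C$ by listing all of $S_0$, then all of $S_1$, \ldots, then all of $S_{K^*-1}$, in order, and interpret each command as toppling the full mass at the named vertex. A short induction on $k$ shows that on the event $\mathcal{E} := \{Y_k \in S_k \text{ for all } k < K^*\}$ the particle has made at least $k+1$ successful jumps by the end of block $S_k$: if the particle is at $Y_j$ with $j>k$ entering block $S_k$ then already $\geq k+1$ jumps have occurred, whereas if $j=k$ then $Y_k \in S_k$ ensures that command $Y_k$ eventually fires and advances the particle to $Y_{k+1}$. Hence $\kappa \geq K^*$ on $\mathcal{E}$, and by assumption~\ref{ass:speed} $d(\rootv,Y_\kappa) \geq n$ with probability $1-o(1)$, giving $\p_\rootv(\text{exit}) \geq p$ for $\eta$ small and $n$ large. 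The total length satisfies $|C| = O_\eta(1) + \sum_{k=K_0}^{K^*-1}|S_k| \leq \exp\!\bigl(nh/\ell \cdot (1+O(\eta))\bigr)$, and letting $\eta \downarrow 0$ yields the upper bound.

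\textbf{Lower bound.} For any command sequence $(v_1,\ldots,v_N)$, the $K$-th successful command must itself name $Y_{K-1}$, hence
\[
\{\kappa \geq K\} \subseteq \{Y_{K-1} \in \{v_1,\ldots,v_N\}\} \quad\Longrightarrow\quad \p_\rootv(\kappa \geq K) \leq \sum_{v \in C} p_{K-1}(\rootv,v),
\]
where $C \subseteq V$ is the support of the command multiset, so $|C| \leq N$. Splitting the sum at the typical set $T_{K-1,\eta} := \{v : p_{K-1}(\rootv,v) \leq e^{-(K-1)(h-\eta)}\}$, the part inside $T_{K-1,\eta}$ is at most $N \cdot e^{-(K-1)(h-\eta)}$, while the part outside equals $\p_\rootv(Y_{K-1} \notin T_{K-1,\eta}) = o(1)$ by Shannon's theorem. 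Meanwhile, a.s.\ convergence of $d(\rootv,Y_k)/k$ to $\ell$ forces $\tau_n/n \to 1/\ell$ a.s.\ for the first exit time $\tau_n := \inf\{k : Y_k \notin B_n\}$, so with $K^- := \lceil n/[\ell(1+\eta)]\rceil$ we have $\p_\rootv(\tau_n \leq K^-) = o(1)$; any realization with $Y_\kappa \notin B_n$ and $\kappa < K^-$ would violate this. Combining,
\[
p - o(1) \;\leq\; \p_\rootv(Y_\kappa \notin B_n) \;\leq\; \p_\rootv(\kappa \geq K^-) + o(1) \;\leq\; N e^{-(K^--1)(h-\eta)} + o(1),
\]
so $N \geq (p - o(1))\exp\!\bigl(nh/\ell \cdot (1-O(\eta))\bigr)$, and sending $\eta\downarrow 0$ produces the matching lower bound.

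\textbf{Main obstacle.} The bulk of the technical work is extracting \emph{quantitative} rates from the two a.s.\ statements in the hypotheses (Shannon's theorem and the speed convergence) uniformly across a window of size $\Theta(n/\ell)$: control of $\p_\rootv(Y_k \notin S_k)$ on the entire interval $[K_0, K^*]$ for the upper bound, and of $\p_\rootv(Y_{K-1} \notin T_{K-1,\eta})$ together with $\p_\rootv(\tau_n \leq K^-)$ for the lower bound. Each is a routine consequence of the implication ``$\cdot$ holds a.s.'' $\Rightarrow$ $\p(\cdot\text{ fails i.o.}) = 0$, combined with truncation to a finite window; once these quantitative upgrades are in hand, the rest is bookkeeping between the block sizes $|S_k| \leq e^{k(h+\eta)}$ and the accumulated probabilities.
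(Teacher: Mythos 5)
Your proof is correct and takes a genuinely different route from the paper in both directions, especially the lower bound. Both of your arguments are built on the probabilistic reformulation of toppling moves as blind-controller commands; that reformulation is exact only once partial topplings are reduced to full topplings, which is the paper's Corollary~\ref{cor:full_toppling} (itself proved via the smoothing lemma), so you should cite it. For the upper bound, the paper simulates the killed random walk on a typical set $U_n$ by cycling through all of $U_n$ for $t^{*}$ rounds, yielding $t^{*}|U_n|$ moves; your block sequence $(S_0,S_1,\dots,S_{K^*-1})$, each listed once, together with the induction that on $\mathcal{E}$ the particle advances by at least one step per block, is a leaner construction giving $\sum_k |S_k|$ moves (same exponent, but a nicer constant). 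The one step you state too tersely is the passage from $\kappa\ge K^*$ to $d(\rootv,Y_\kappa)\ge n$: since $\kappa$ is a function of the entire trajectory and the command sequence, one must invoke the event $\left\{\,d(\rootv,Y_k)\ge n\text{ for all }k\ge K^*\,\right\}$, which has probability $1-o(1)$ by a.s.\ convergence of $d(\rootv,Y_k)/k\to\boldsymbol{\ell}$; this is fine but worth spelling out. For the lower bound, the paper first smooths (via Corollary~\ref{cor:smoothing}) to an approximately uniform measure on $V_{t^{**},n}$ and then counts the vertices that must be toppled; your direct union bound $\p(\kappa\ge K)\le\sum_{v\in C}p_{K-1}(\rootv,v)$, split at the typical set, is shorter, avoids an explicit smoothing step, and yields the same exponent. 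Overall your lower-bound argument is a nice simplification, at the cost of leaning more heavily on the blind-controller equivalence.
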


\begin{proof}
To prove the upper bound, 
we define a sequence of toppling moves that simulates the random walk, killed when it exits $B_{n}$, until time $t^{*} = \left( 1 + \eps \right) n / \boldsymbol{\ell}$, by which time most of the mass is outside of $B_{n}$. 
However, in order to get an upper bound of the correct order, we only do the toppling moves at the subset of sites that the random walk typically visits. 
The rest of the proof makes this precise. 

Fix $\eps > 0$ and let $t^{*} = \left( 1 + \eps \right) n / \boldsymbol{\ell}$. We first define the set of vertices on which we simulate the random walk. 
Let 
\[
 r_{n} := \max \left\{ r : \left| B_{r} \right| \leq n \right\}
\]
and note that $\lim_{n \to \infty} r_{n} = \infty$ due to the assumptions on $G$. 
Define also 
\begin{equation}\label{eq:Vtn}
V_{t,n} := \left\{ x \in B_{n} : \frac{1}{t} \log p_{t} \left( \rootv, x \right) \in \left( - \boldsymbol{h} \left( 1 + \eps \right), - \boldsymbol{h} \left( 1 - \eps \right) \right) \right\},
\end{equation}
and note that 
$\left| V_{t,n} \right| \leq \exp \left( t \boldsymbol{h} \left( 1 + \eps \right) \right)$ for every $t$, 
since $p_{t} \left( \rootv, x \right) \geq \exp \left( - t \boldsymbol{h} \left( 1 + \eps \right) \right)$ for every $x \in V_{t,n}$. 
Now define 
\[
 U_{n} := B_{r_{n}} \cup \bigcup_{t=r_{n}}^{t^{*}} V_{t,n}
\]
and let $Z_{t} := X_{t \wedge T_{U_{n}}}$ denote the random walk started at $\rootv$ and killed when it exits $U_{n}$. 
We can simulate the killed random walk 
$\left\{ Z_{t} \right\}_{t = 0}^{t^{*}}$ 
using $t^{*} \left| U_{n} \right|$ toppling moves. 
We shall show that 
\begin{equation}\label{eq:Zt*}
\p_{\rootv} \left( Z_{t^{*}} \notin B_{n} \right) \geq p
\end{equation}
if $n$ is large enough, 
which thus implies that 
\[
N_{p} \left( G, B_{n}, \delta_{\rootv} \right) 
\leq t^{*} \left| U_{n} \right| 
\leq t^{*} \left( n + t^{*} \exp \left( t^{*} \boldsymbol{h} \left( 1 + \eps \right) \right) \right)
\]
if $n$ is large enough. 
Since this holds for every $\eps > 0$, 
we get the desired upper bound stated in~\eqref{eq:general_pos_speed}. 

So what remains is to show~\eqref{eq:Zt*}. 
There are two ways that $Z_{t^{*}}$ can be in the ball $B_{n}$: 
either it is in the set $U_{n}$, 
or the random walk exited $U_{n}$ before exiting the ball $B_{n}$, 
and thus we have that 
\begin{equation}\label{eq:two_scenarios}
\p_{\rootv} \left( Z_{t^{*}} \in B_{n} \right) 
= 
\p_{\rootv} \left( Z_{t^{*}} \in U_{n} \right) 
+ \p_{\rootv} \left( Z_{t^{*}} \in B_{n} \setminus U_{n} \right).
\end{equation}
The first scenario is unlikely due to Assumption~\ref{ass:speed}. 
Specifically, if the killed random walk has not exited $U_{n}$, then its distance from $X_{0} = \rootv$ is less than $n$, so we have that 
\[
\p_{\rootv} \left( Z_{t^{*}} \in U_{n} \right) 
\leq 
\p_{\rootv} \left( d \left( X_{0}, X_{t^{*}} \right) < n \right) 
= 
\p_{\rootv} \left( \tfrac{1}{t^{*}} d \left( X_{0}, X_{t^{*}} \right) < \boldsymbol{\ell}/ \left( 1+\eps \right) \right).
\]
Assumption~\ref{ass:speed} implies that this latter probability goes to $0$, since $t^{*} \to \infty$ as $n \to \infty$. 
In particular, if $n$ is large enough then we have that 
$\p_{\rootv} \left( Z_{t^{*}} \in U_{n} \right) \leq (1-p)/2$. 
The second probability on the right hand side of~\eqref{eq:two_scenarios} is small due to Assumption~\ref{ass:conv}. 
First note that the random walk satisfies $Z_{t} \in U_{n}$ for all $t < r_{n}$ due to the construction of $U_{n}$. 
Now if the random walk exited $U_{n}$ before exiting $B_{n}$, then by the definition of $U_{n}$ there must exist a time 
$t \in \left\{ r_{n}, r_{n} + 1, \dots, t^{*} \right\}$ 
such that 
$X_{t} \in B_{n} \setminus V_{t,n}$. 
This implies that 
\[
\p_{\rootv} \left( Z_{t^{*}} \in B_{n} \setminus U_{n} \right)
\leq 
\p_{\rootv} \left( \exists\, t \geq r_{n} : \tfrac{1}{t} \log p_{t} \left( X_{0}, X_{t} \right) \notin \left( - \boldsymbol{h} \left( 1 + \eps \right), - \boldsymbol{h} \left( 1 - \eps \right) \right) \right).
\]
Assumption~\ref{ass:conv} implies that this latter probability converges to $0$ as $r_{n} \to \infty$. 
Since $r_{n} \to \infty$ as $n \to \infty$, we have in particular that 
$\p_{\rootv} \left( Z_{t^{*}} \in B_{n} \setminus U_{n} \right) \leq (1-p)/2$ 
if $n$ is large enough. 
This concludes the proof of~\eqref{eq:Zt*}.

To prove the lower bound stated in~\eqref{eq:general_pos_speed}, we again start by smoothing the initial mass distribution, by simulating simple random walk on $G$ until time $t^{**} := \left( 1 - \eps \right) n / \boldsymbol{\ell}$. 
As we shall see, the mass distribution is then approximately uniformly distributed on a subset of $B_{n}$ of size approximately 
$\exp \left( t^{**} \boldsymbol{h} \right)$. 
In order to transport a constant mass outside of $B_{n}$, it is then necessary to topple the mass at a constant fraction of the vertices in this subset, which leads to the desired lower bound. 
The rest of the proof makes this precise. 

Fix $\eps > 0$ and let $t^{**} := \left( 1 - \eps \right) n / \boldsymbol{\ell}$. 
The choice of $t^{**}$ is due to the fact that, by Assumption~\ref{ass:speed}, with probability close to $1$, simple random walk on $G$ does not exit the ball $B_{n}$ until time $t^{**}$. 
Let $Z'_{t} := X_{t \wedge T_{B_{n}}}$ denote the simple random walk on $G$ killed when it exits $B_{n}$. 
Starting with the initial mass distribution $\delta_{\rootv}$, 
we apply a sequence of $t^{**} \times \mathrm{Vol} \left( B_{n} \right)$ toppling moves that simulate $t^{**}$ steps of the killed random walk $\left\{ Z'_{t} \right\}_{t = 0}^{t^{**}}$, 
to arrive at a new mass distribution $\mu$. 
By Corollary~\ref{cor:smoothing} we have that 
$N_{p} \left( G, B_{n}, \delta_{\rootv} \right) \geq N_{p} \left( G, B_{n}, \mu \right)$, 
so it suffices to bound from below this latter quantity. 
Recall the definition of $V_{t,n}$ from~\eqref{eq:Vtn}. 
By the definition of $t^{**}$ and Assumptions~\ref{ass:speed} and~\ref{ass:conv}, it follows that 
\[
 \mu \left( V_{t^{**},n} \right) \geq 1 - \frac{p}{2}
\]
if $n$ is large enough. 
Therefore, in order to transport mass $p$ outside of $B_{n}$ starting from the mass distribution $\mu$, it is necessary to transport mass at least $p/2$ from vertices in $V_{t^{**},n}$. 
However, 
$\mu \left( x \right) \leq \exp \left( - t^{**} \boldsymbol{h} \left( 1 - \eps \right) \right)$ 
for every $x \in V_{t^{**},n}$, 
so at least 
\[
\frac{p}{2} \times \exp \left( t^{**} \boldsymbol{h} \left( 1 - \eps \right) \right) 
= \frac{p}{2} \times \exp \left( n \times \frac{\boldsymbol{h}}{\boldsymbol{\ell}} \left( 1 - \eps \right)^{2} \right) 
\]
vertices in $V_{t^{**},n}$ need to be toppled at least once. 
Since this holds for any $\eps > 0$, the result follows. 
\end{proof}

\subsection{Galton-Watson trees} \label{sec:proofs_gw_trees} 

The behavior of simple random walk on Galton-Watson trees was studied in great detail by Lyons, Pemantle, and Peres~\cite{lpp}. 
Using their results, combined with the general results of Section~\ref{sec:proofs_pos_speed_general}, we can prove Theorem~\ref{thm:GW}. 

Specifically, Lyons, Pemantle, and Peres~\cite{lpp} showed that the three conditions of Theorem~\ref{thm:general_pos_speed} hold for almost every Galton-Watson tree. 
Furthermore, they also show that the ratio of the asymptotic entropy and speed is equal to the Hausdorff dimension of harmonic measure on the boundary of a Galton-Watson tree. 
Here we state the basic results necessary to conclude Theorem~\ref{thm:general_pos_speed}, and refer to~\cite{lpp} for much more detailed results, 
including formulas for the asymptotic speed and entropy as a function of the offspring distribution of the Galton-Watson branching process. 
We state this result for nondegenerate offspring distributions, as degenerate offspring distributions (giving rise to $m$-ary trees) are treated more carefully in Section~\ref{sec:proofs_dary_trees}. 

\begin{theorem}\label{thm:LPP_GWT}[\cite[Theorem~1.1,~Theorem~3.2,~Theorem~9.7]{lpp}]
Fix a nondegenerate offspring distribution with mean $m > 1$ and let $\mathrm{GWT}$ be a Galton-Watson tree obtained with this offspring distribution, on the event of nonextinction. 
Let $\left\{ X_{t} \right\}_{t \geq 0}$ denote simple random walk on $\mathrm{GWT}$ started from the root $\rootrho$, i.e., with $X_{0} = \rootrho$, 
and let $p_{t} \left( \cdot, \cdot \right)$ denote the $t$ step probability transition kernel. 
For almost every Galton-Watson tree $\mathrm{GWT}$ the following statements hold. 
The asymptotic speed $\boldsymbol{\ell}$ and Avez entropy $\boldsymbol{h}$ of the random walk exist and are positive almost surely. 
Moreover, we have that 
\[
 \frac{\boldsymbol{\ell}}{\boldsymbol{h}} = \mathbf{dim} 
\]
almost surely, where $\mathbf{dim}$ is the dimension of harmonic measure, which is almost surely a constant less than $\log m$. 
Furthermore, we have that 
\begin{equation*}
  \lim_{t \to \infty} \frac{1}{t} \log p_{t} \left( \rootv, X_{t} \right) = - \boldsymbol{h}
\end{equation*}
almost surely. 
\end{theorem}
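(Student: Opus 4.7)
The plan is to develop the "environment seen from the walker" framework tailored to Galton-Watson trees, following Kaimanovich-Vershik's approach for groups but adapted to the random, non-transitive setting. I will first introduce the augmented Galton-Watson (AGW) measure, under which the root is endowed with an extra distinguished neighbor (its "parent") and the remaining subtrees at the root are independent copies of $\mathrm{GWT}$. On the space of (rooted tree, vertex) pairs, define a Markov chain by simultaneously re-rooting the tree at the walker's current position and letting the walker take a simple random walk step. A direct computation shows this chain is stationary (and in fact reversible) under AGW weighted by the degree of the root, and on the nonextinction event it is ergodic by a tail-triviality argument for the ambient tree $\sigma$-algebra.

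With this stationary framework in hand, existence of the asymptotic speed $\boldsymbol{\ell}$ follows from Kingman's subadditive ergodic theorem applied to $t \mapsto d(\rootrho, X_t)$, and existence of the Avez entropy $\boldsymbol{h}$ follows from applying Shannon-McMillan in expectation to the stationary environment chain. Positivity of $\boldsymbol{\ell}$ requires transience, which holds by the Lyons transience criterion combined with the fact that the branching number of $\mathrm{GWT}$ equals $m>1$ almost surely on nonextinction; one then constructs regeneration times---times past which the walker never backtracks---and shows by a renewal argument on the AGW chain that these occur with positive density in time, yielding a lower bound on the distance traveled per step. Positivity of $\boldsymbol{h}$ reduces to a lower bound on the conditional entropy of one step given the past, available at each regeneration because the walker has at least two distinct descendants to choose from with positive probability (this is where nondegeneracy enters).

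For the identification with the dimension of harmonic measure, transience implies $X_t$ converges almost surely to a boundary ray $X_\infty \in \partial \mathrm{GWT}$; let $\nu$ denote the law of $X_\infty$ (harmonic measure from the root). For a boundary point $\xi$ with ancestor $\xi_n$ at depth $n$, set $\nu_n(\xi) := \nu(\{\eta : \eta_n = \xi_n\})$. The speed theorem gives that at time $t \approx n/\boldsymbol{\ell}$ the walker sits at $\xi_n$ when $\xi \sim \nu$, while the almost-sure convergence~\eqref{eq:conv_to_h} gives $p_t(\rootrho, X_t) \approx e^{-\boldsymbol{h} t}$; relating these two facts via the Green's-function identity $\nu_n(\xi) \asymp G(\rootrho, \xi_n)/G(\xi_n, \xi_n)$ yields $-\log \nu_n(\xi)/n \to \boldsymbol{h}/\boldsymbol{\ell}$ almost surely under $\nu$, which is the Hausdorff dimension of harmonic measure and establishes the stated relationship between $\boldsymbol{\ell}$, $\boldsymbol{h}$, and $\mathbf{dim}$. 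The strict inequality $\mathbf{dim} < \log m$ in the nondegenerate case follows by comparing $\boldsymbol{h}$ to the entropy rate of the size-biased branching walk, where a strict Jensen inequality on the offspring distribution forces a gap.

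The main obstacle is the pathwise convergence~\eqref{eq:conv_to_h}. The idea is to write $-\log p_t(\rootrho, X_t)$ using the Markov property and a Radon-Nikodym correction as a Birkhoff-type average along the trajectory, and then apply the ergodic theorem to the stationary AGW chain, transferring back via absolute continuity of the law started from $\rootrho$ with respect to AGW. Justifying this pathwise convergence requires sharp integrability estimates on $-\log p_1(\cdot,\cdot)$ and on $\log \deg$ under AGW; this is the technical heart of~\cite{lpp}, whereas the remaining ingredients are relatively routine once the AGW stationary framework and regeneration construction are in place.
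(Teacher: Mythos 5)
This theorem is not proved in the paper at all: it is imported verbatim as a citation to Lyons--Pemantle--Peres \cite[Theorems~1.1, 3.2, 9.7]{lpp}, and the paper's only ``proof'' activity here is to invoke it in the one-line deduction of Theorem~\ref{thm:GW}. So your proposal cannot be compared to an in-paper argument; it has to be judged as an attempted reconstruction of the LPP proof. As such, your outline is faithful to the actual architecture of \cite{lpp}: the augmented Galton--Watson measure, reversibility of the degree-biased environment-seen-from-the-walker chain, ergodicity, Kingman for existence of $\boldsymbol{\ell}$, regeneration times for positivity of speed and entropy, and the Green's-function comparison identifying $\mathbf{dim}$ with the entropy/speed ratio are exactly the ingredients LPP use. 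One point in your favor: you derive $\mathbf{dim}=\boldsymbol{h}/\boldsymbol{\ell}$, which is the correct relation and the one actually used to pass from Theorem~\ref{thm:general_pos_speed} to Theorem~\ref{thm:GW}; the displayed formula $\boldsymbol{\ell}/\boldsymbol{h}=\mathbf{dim}$ in the statement is a typo in the paper.

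That said, as a proof your proposal has a genuine gap, and you identify it yourself: the almost-sure convergence \eqref{eq:conv_to_h} --- which is one of the three assertions of the theorem, not an optional refinement --- is reduced to ``a Birkhoff-type average along the trajectory'' plus unspecified ``sharp integrability estimates.'' The difficulty is real: $-\log p_t(\rootrho,X_t)$ is not a Birkhoff sum of a fixed function of the environment chain (the increments $-\log\bigl(p_{t+1}(\rootrho,X_{t+1})/p_t(\rootrho,X_t)\bigr)$ depend on the entire history through the conditional distribution of $X_t$), so one must first establish $L^1$/a.e.\ convergence of $\tfrac1t H(X_t)$ via a subadditivity argument in the stationary setting, and then upgrade to pathwise convergence by a martingale or Shannon--McMillan--Breiman-type argument; this occupies most of Section~9 of \cite{lpp} and is not recoverable from the sketch. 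Similarly, positivity of $\boldsymbol{h}$ and the strict inequality $\mathbf{dim}<\log m$ are asserted via ``a strict Jensen inequality'' without identifying the quantity to which Jensen is applied. If the intent is to replace the citation by a proof, these steps must be supplied; if the intent is, like the paper's, to use the result as a black box, the outline is a correct account of where it comes from but should not be presented as a proof.
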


\begin{proof}[Proof of Theorem~\ref{thm:GW}]
Theorem~\ref{thm:LPP_GWT} shows that the three conditions of Theorem~\ref{thm:general_pos_speed} hold for almost every Galton-Watson tree. 
Hence Theorem~\ref{thm:GW} follows from Theorem~\ref{thm:general_pos_speed}. 
\end{proof}

\subsection{Product of trees} \label{sec:proofs_product_trees} 

In this subsection we apply the general result derived in Section~\ref{sec:proofs_pos_speed_general} to obtain tight bounds for the specific case of the product of trees. 
As we shall see, the key observation is that random walk typically does not visit the entire ball $B_{n}$ on the product of trees, 
due to its different speeds on the edges belonging to different trees. 

Let $\mathbb{T}_{d}$ denote the $(d+1)$-regular tree.\footnote{In Section~\ref{sec:proofs_dary_trees}, $\mathbb{T}_{d}$ denotes the $d$-ary tree, which differs from the $(d+1)$-regular tree in that the root $\rootrho$ has degree $d$ instead of $d+1$. This difference is not important for the questions we consider, so we allow ourselves this abuse of notation.} 
We define the Cartesian product 
$\mathbb{T}_{d} \times \mathbb{T}_{k}$ 
to have vertex set 
$V \left( \mathbb{T}_{d} \times \mathbb{T}_{k} \right) = V \left( \mathbb{T}_{d} \right) \times V \left( \mathbb{T}_{k} \right)$ 
and edge set defined as follows: 
\[
\left( u, v \right) \sim \left( u', v' \right) 
\Longleftrightarrow 
\begin{cases}  
u \sim u' \text{ and } v=v', & \text{or} \\
u = u' \text{ and } v \sim v'.
\end{cases}
\]
Note that 
$\mathbb{T}_{d} \times \mathbb{T}_{k}$ 
is a $(d+k+2)$-regular graph. 
Note also that $\mathbb{T}_{1}$ is isomorphic to $\mathbb{Z}$, and so $T_{1} \times T_{1}$ is isomorphic to $\mathbb{Z}^{2}$; 
this graph is covered by Theorem~\ref{thm:zd}, and hence we may assume that $d + k \geq 3$. 

\begin{proof}[Proof of Theorem~\ref{thm:product_trees}]
We prove this result by appealing to the general result of Theorem~\ref{thm:general_pos_speed}. 
Therefore we need to check that the three assumptions of Theorem~\ref{thm:general_pos_speed} hold 
and we also need to compute the asymptotic speed $\boldsymbol{\ell}$ and the Avez entropy $\boldsymbol{h}$ for simple random walk on $\mathbb{T}_{d} \times \mathbb{T}_{k}$.

Let $\left\{ X_{t} \right\}_{t \geq 0}$ denote simple random walk on $\mathbb{T}_{d} \times \mathbb{T}_{k}$ with $X_{0} = \rootrho$. 
We start by computing the speed of random walk. 
Recall that the speed of random walk on the $(d+1)$-regular tree $\mathbb{T}_{d}$ is 
$\tfrac{d-1}{d+1}$. 
Moreover, the probability of random walk on $\mathbb{T}_{d} \times \mathbb{T}_{k}$ making a step in the first coordinate (corresponding to $\mathbb{T}_{d}$) is 
$\tfrac{d+1}{d+k+2}$. 
Hence the speed of random walk $\left\{ X_{t} \right\}_{t \geq 0}$ is the convex combination of the speeds of random walk on the regular trees $\mathbb{T}_{d}$ and $\mathbb{T}_{k}$: 
\begin{equation}\label{eq:product_trees_speed}
\boldsymbol{\ell} = \frac{d+1}{d+k+2} \times \frac{d-1}{d+1} + \frac{k+1}{d+k+2} \times \frac{k-1}{k+1} = \frac{d+k-2}{d+k+2}. 
\end{equation}
Since $d+k \geq 3$, the speed is positive: $\boldsymbol{\ell} > 0$. 

Since $\mathbb{T}_{d} \times \mathbb{T}_{k}$ is a transitive graph, we know from Theorem~\ref{thm:Shannon} that~\eqref{eq:conv_to_h} holds. 
Thus what remains is to compute the Avez entropy of $\left\{ X_{t} \right\}_{t \geq 0}$ and to show that it is positive. 
We start by computing the Avez entropy of random walk on $\mathbb{T}_{d}$. 
Let $\left\{ Y_{t} \right\}_{t \geq 0}$ denote simple random walk on $\mathbb{T}_{d}$ started from the root, i.e., with $Y_{0} = \rootrho$, 
and let $\left| Y_{t} \right|$ denote the distance of $Y_{t}$ from the root $\rootrho$. 
By the chain rule of conditional entropy we have that 
\[
H \left( Y_{t} \right) = H \left( \left| Y_{t} \right| \right) + H \left( Y_{t} \, \middle| \, \left| Y_{t} \right| \right).
\]
Since $\left| Y_{t} \right|$ takes values in $\left\{ 0, 1, \dots, t \right\}$, we have that 
$H \left( \left| Y_{t} \right| \right) \in \left[ 0, \log \left( t + 1 \right) \right]$. 
For $i \in \left[ t \right]$, conditioned on $\left| Y_{t} \right| = i$, the random variable $Y_{t}$ is uniformly distributed among all $\left( d + 1 \right) d^{i-1}$ vertices at distance $i$ from the root. 
Hence, using the fact that the asymptotic speed of $\left\{ Y_{t} \right\}_{t \geq 0}$ is $\tfrac{d-1}{d+1}$, we have that 
\begin{align*}
H \left( Y_{t} \, \middle| \, \left| Y_{t} \right| \right) 
&= \sum_{i=1}^{t} \p \left( \left| Y_{t} \right| = i \right) \times \log \left( \left( d + 1 \right) d^{i-1} \right) 
= \log \left( 1 + 1/d \right) \times \p \left( \left| Y_{t} \right| \neq 0 \right) + \log \left( d \right) \times \E \left[ \left| Y_{t} \right| \right] \\
&= \log \left( d \right) \times \frac{d-1}{d+1} \times t \left( 1 + o \left( 1 \right) \right).
\end{align*}
We conclude that the Avez entropy of $\left\{ Y_{t} \right\}_{t \geq 0}$ is 
\[
\boldsymbol{h}_{Y} = \log \left( d \right) \times \frac{d-1}{d+1}.
\]
Now let $\left\{ Z_{t} \right\}_{t \geq 0}$ denote simple random walk on $\mathbb{T}_{k}$ started from the root, i.e., with $Z_{0} = \rootrho$, 
and let $\left\{ Y_{t} \right\}_{t \geq 0}$ and $\left\{ Z_{t} \right\}_{t \geq 0}$ be independent. 
Furthermore, independently of everything else, let $\left\{ W_{i} \right\}_{i \geq 1}$ be i.i.d.\ Bernoulli random variables with expectation $\tfrac{d+1}{d+k+2}$, 
and let $S_{t} := \sum_{i=1}^{t} W_{i}$. 
Then, by construction, 
$\left\{ \left( Y_{S_{t}}, Z_{t - S_{t}} \right) \right\}_{t \geq 0}$ 
has the same distribution as 
$\left\{ X_{t} \right\}_{t \geq 0}$. 
We can again use the chain rule of conditional entropy, this time conditioning on $S_{t}$, to get that 
\[
H \left( X_{t} \right) = H \left( S_{t} \right) + H \left( \left( Y_{S_{t}}, Z_{t - S_{t}} \right) \, \middle| \, S_{t} \right). 
\]
Since $S_{t}$ takes values in $\left\{0, 1, \dots, t \right\}$, we have that 
$H \left( S_{t} \right) \in \left[ 0, \log \left( t + 1 \right) \right]$. 
Conditioning on $S_{t}$, the random variables $Y_{S_{t}}$ and $Z_{t-S_{t}}$ are independent, and hence 
$H \left( \left( Y_{S_{t}}, Z_{t - S_{t}} \right) \, \middle| \, S_{t} \right) 
= H \left( Y_{S_{t}} \, \middle| \, S_{t} \right) 
+ H \left( Z_{t - S_{t}} \, \middle| \, S_{t} \right)$. 
Therefore, using the computation from above of the entropy of random walk on a regular tree, together with the fact that 
$S_{t} = \tfrac{d+1}{d+k+2} t \left( 1 + o \left( 1 \right) \right)$ with high probability, 
we obtain that the Avez entropy of $\left\{ X_{t} \right\}_{t \geq 0}$ is 
\begin{align*}
\boldsymbol{h}_{X} 
&= \frac{d+1}{d+k+2} \boldsymbol{h}_{Y} + \frac{k+1}{d+k+2} \boldsymbol{h}_{Z} \\
&= \frac{d-1}{d+k+2} \log \left( d \right) + \frac{k-1}{d+k+2} \log \left( k \right). 
\end{align*}
Since at least one of $d$ and $k$ is greater than $1$, the Avez entropy $\boldsymbol{h}_{X}$ is positive. 
Plugging in the values of $\boldsymbol{\ell}$ and $\boldsymbol{h}$ into the conclusion of Theorem~\ref{thm:general_pos_speed}, we obtain the desired result. 
\end{proof}

\section{Graphs with bounded degree and exponential decay of the Green's function} \label{sec:proofs_bdd_deg_Green} 

In this section we study graphs of bounded degree with exponential decay of the Green's function, 
showing that the minimum number of toppling moves necessary to transport a constant mass to distance at least $n$ 
is exponential in $n$. 

Let $G = \left( V, E \right)$ be an infinite and connected graph with bounded degree. 
Recall the definition of the Green's function $g$ for simple random walk on $G$ from Definition~\ref{def:green}. 
We say that the Green's function has \emph{exponential decay} if there exist positive and finite constants $a$ and $a'$ depending only on $G$ such that 
\begin{equation}\label{eq:exp_decay_Green}
g \left( x, y \right) \leq \exp \left( - a \times d \left( x, y \right) + a' \right)
\end{equation}
for every $x, y \in V$, where $d$ denotes graph distance. 
Note that the Green's function cannot decay faster than exponentially as a function of the distance.

If simple random walk on $G$ has positive speed and positive entropy, then the Green's function has exponential decay (see~\cite{bp,bhm}). 
However, the reverse implication does not hold, 
and hence the method described in Section~\ref{sec:proofs_pos_speed_general} 
to bound the minimum number of toppling moves $N_{p} \left( G, B_{n}, \delta_{\rooto} \right)$ 
does not work in general. 
As an example, we shall investigate the lamplighter graph with base graph $\mathbb{Z}$, 
for which it has been shown that the speed and entropy of simple random walk are both zero (see~\cite[Proposition~6.2]{kaimanovich1983random}).

We restate Theorem~\ref{thm:bdd_deg_Green} more precisely before proving it.

\begin{theorem}\label{thm:bdd_deg_Green_restatement}
Let $G = \left( V, E \right)$ be an infinite and connected graph such that 
every vertex has degree at most $D$ and 
the Green's function $g$ for simple random walk on $G$ satisfies~\eqref{eq:exp_decay_Green}. 
Start with initial unit mass $\delta_{\rootv}$ at a vertex $\rootv \in V$ and let $p \in (0,1)$ be constant. 
The minimum number of toppling moves needed to transport mass $p$ to distance at least $n$ from $\rootv$ is 
\[
N_{p} \left( G, B_{n}, \delta_{\rootv} \right) = \exp \left( \Theta \left( n \right) \right),
\]
where the implied constants depend only on $p$, $D$, $a$, and $a'$. 
\end{theorem}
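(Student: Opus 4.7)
The plan splits the proof into an exponential upper bound via Theorem~\ref{thm:volumeexittime} and a matching exponential lower bound via a pointwise bound on the mass combined with a support-counting argument.

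For the upper bound I will apply Theorem~\ref{thm:volumeexittime} directly. The bounded-degree hypothesis gives $\mathrm{Vol}(B_n) \leq D^n$, and bounding the exit time by the Green's function,
\[
\mathbb{E}_{\rootv}[T_{B_n}] \;\leq\; \sum_{y \in B_n} g(\rootv, y) \;\leq\; \sum_{k=0}^{n-1} D^k \, e^{-ak + a'},
\]
shows that $\mathbb{E}_{\rootv}[T_{B_n}] = \exp(O(n))$ regardless of how $\log D$ compares to $a$. Multiplying the two bounds yields $N_p(G, B_n, \delta_{\rootv}) = \exp(O(n))$.

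For the lower bound, the key ingredient will be the pointwise estimate
\[
\mu_t(v) \;\leq\; g(\rootv, v) \;\leq\; \exp(-a \cdot d(\rootv, v) + a') \qquad \text{for all } t \geq 0 \text{ and } v \in V.
\]
To prove this, I would first invoke Corollary~\ref{cor:full_toppling} to reduce without loss of generality to the case where every toppling move is full, so that $\mu_t(v)$ is literally the probability that a particle starting at $\rootv$ is located at $v$ at time $t$ under the ``if you are at $v_i$, jump'' dynamics described in the introduction. I would then couple this particle with simple random walk $\{X_k\}_{k \geq 0}$ started at $\rootv$ via a single shared collection of i.i.d.\ ``next-step'' random neighbors attached to each vertex and indexed by visit number. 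A short induction shows that the controlled particle's trajectory is always a prefix of the SRW trajectory, so
\[
\mu_t(v) \;\leq\; \mathbb{P}_{\rootv}\bigl(X_k = v \text{ for some } k \geq 0\bigr) \;=\; g(\rootv, v)/g(v,v) \;\leq\; g(\rootv, v),
\]
using $g(v,v) \geq 1$; the exponential-decay hypothesis~\eqref{eq:exp_decay_Green} then gives the claimed bound.

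To close the argument, observe that if $\mu_t(B_n^c) \geq p$ then the pointwise bound forces $\mathrm{supp}(\mu_t)$ to contain at least $p \, e^{an - a'}$ vertices outside $B_n$. Combined with the elementary fact that each toppling adds at most $D$ new vertices to the support (so $|\mathrm{supp}(\mu_t)| \leq 1 + Dt$), this gives $t \geq (p \, e^{an - a'} - 1)/D = \exp(\Omega(n))$, matching the upper bound. The main conceptual point is that, in contrast to the $\mathbb{Z}^d$ case, no energy or potential argument is needed here: the exponential decay of the Green's function yields a pointwise bound that, together with the linear-in-$t$ growth of $|\mathrm{supp}(\mu_t)|$, already forces an exponential lower bound. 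The only step that requires a bit of care is the coupling to SRW after reducing to full topplings, but this is routine.
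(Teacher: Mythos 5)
Your upper bound is essentially the paper's: invoke Theorem~\ref{thm:volumeexittime}, bound $\mathrm{Vol}(B_n) \le D^n$, and bound the exit time by the Green's function (the paper uses the cruder $\E_\rootv[T_{B_n}] \le e^{a'}\mathrm{Vol}(B_n)$; your shell-by-shell sum is a minor refinement, and both give $\exp(O(n))$).

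Your lower bound, however, is a genuinely different and arguably cleaner argument. The paper follows the same smoothing template it uses elsewhere: it simulates the random walk killed on exiting $B_{n-1}$ for $t^*$ steps so that mass $\ge 1 - p/2$ sits on $\partial B_{n-1}$, bounds the mass at each boundary vertex by $g(\rootv, x) \le e^{-a(n-1)+a'}$, and concludes that at least $(p/2)e^{an-a-a'}$ boundary vertices must be toppled at least once. You instead bypass smoothing entirely: after reducing to full topplings via Corollary~\ref{cor:full_toppling}, you interpret $\mu_t$ as the law of a controlled particle, couple its trajectory to simple random walk via shared per-vertex departure randomness, and deduce the \emph{uniform-in-$t$} pointwise bound $\mu_t(v) \le \P_\rootv(\exists k : X_k = v) \le g(\rootv, v)$. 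Combined with the elementary observation that $|\mathrm{supp}(\mu_t)| \le 1 + Dt$, this forces $t \ge (p\,e^{an-a'}-1)/D$. The coupling claim (that the controlled particle's sequence of actual moves is a prefix of the SRW trajectory under the shared randomness) is correct and provable by a short induction on the number of moves, since departure counts from each vertex are determined by the common trajectory prefix. Your approach buys a stronger intermediate statement (a pointwise Green's-function bound valid at \emph{all} times, not merely after smoothing) and avoids having to choose a stopping time $t^*$; the paper's buys uniformity of method with the rest of Section~\ref{sec:proofs_pos_speed-entropy}. Both are sound; the constants in the exponent differ trivially.

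One small point of hygiene: to make the coupling fully rigorous, you should state explicitly that the controller's sequence $(v_1,\dots,v_t)$ is deterministic (as it must be, since the controller is blind), so that the shared-randomness construction defines both processes on a single probability space, and spell out the one-line induction establishing the prefix property. As written you wave at this as ``routine,'' which is fair for a sketch but would need the extra sentence in a final writeup.
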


\begin{proof}
For the upper bound we use the general bound given by Theorem~\ref{thm:volumeexittime}. 
Since $G$ has bounded degree, 
the volume of a ball grows at most exponentially: 
$\mathrm{Vol} \left( B_{n} \right) \leq \sum_{i=0}^{n-1} D^{i} \leq D^{n}$. 
Furthermore, the exit time of random walk from a ball can also be bounded, e.g., in the following crude way. 
The exit time $T_{B_{n}}$ is equal to the number of visits to vertices in $B_{n}$ before the random walk exits $B_{n}$, 
and hence can be bounded by the total number of visits to vertices in $B_{n}$. 
Thus we obtain the following crude bound: 
$\E_{\rootv} \left[ T_{B_{n}} \right] 
\leq 
\sum_{x \in B_{n}} g \left( \rootv, x \right) 
\leq 
e^{a'} \mathrm{Vol} \left( B_{n} \right)$. 
Hence using Theorem~\ref{thm:volumeexittime} we have that 
\[
N_{p} \left( G, B_{n}, \delta_{\rootv} \right) 
\leq 
\left( 1 - p \right)^{-1} e^{a'} D^{2n}.
\]

For the lower bound we again perform smoothing of the initial mass distribution.  
Let $\left\{ X_{t} \right\}_{t \geq 0}$ denote simple random walk on $G$ with $X_{0} = \rootv$, 
and let 
$Z_{t} := X_{t \wedge T_{B_{n-1}}}$ denote the random walk killed when it exits the ball $B_{n-1}$. 
Let 
$t^{*}$ be such that 
\begin{equation}\label{eq:mass_inside}
\p_{\rootv} \left( Z_{t^{*}} \in B_{n-1} \right) \leq p/2. 
\end{equation}
Starting with the initial mass distribution $\delta_{\rootv}$, 
we apply a sequence of $t^{*} \times \mathrm{Vol} \left( B_{n} \right)$ toppling moves 
that simulate $t^{*}$ steps of the killed random walk $\left\{ Z_{t} \right\}_{t \geq 0}$, 
to arrive at a new mass distribution $\mu$. 
By Corollary~\ref{cor:smoothing} we have that 
$N_{p} \left( G, B_{n}, \delta_{\rootv} \right) 
\geq 
N_{p} \left( G, B_{n}, \mu \right)$, 
so it suffices to bound from below this latter quantity. 

Denote the boundary of $B_{n-1}$ by
$\partial B_{n-1} := \left\{ x \in V : d \left( \rootv, x \right) = n - 1 \right\}$. 
For every $x \in \partial B_{n-1}$  
we can bound the mass at $x$ using the Green's function: 
\[
\mu \left( x \right) = \p_{\rootv} \left( Z_{t^{*}} = x \right) 
\leq \p_{\rootv} \left( X_{T_{B_{n-1}}} = x \right) 
\leq \sum_{k=0}^{\infty} \p_{\rootv} \left( X_{k} = x \right) 
= g \left( \rootv, x \right) 
\leq \exp \left( - a n + a + a' \right),
\]
where in the last inequality we used~\ref{eq:exp_decay_Green}. 
Now~\eqref{eq:mass_inside} implies that 
$\mu \left( \partial B_{n-1} \right) \geq 1 - p/2$, 
and so in order to transport mass at least $p$ to outside of $B_{n}$ starting from $\mu$, 
it is necessary to transport mass at least $p/2$ from the vertices in $\partial B_{n-1}$. 
However, the display above shows that every $x \in \partial B_{n-1}$ has mass at most 
$\exp \left( - a n + a + a' \right)$, 
so this requires at least $(p/2) \times \exp \left( a n - a - a' \right)$ toppling moves. 
Hence 
\[
N_{p} \left( G, B_{n}, \mu \right) \geq \frac{p}{2 \exp \left( a+a' \right)} \times e^{a n}. \qedhere 
\]
\end{proof}

\subsection{The lamplighter graph} \label{sec:proofs_lamplighter} 

We illustrate the results above with the lamplighter graph, which is an example of a graph with bounded degree and exponential decay of the Green's function. 

\begin{definition}\label{def:lamplighter}
The \emph{lamplighter group} is the wreath product $\mathbb{Z}_{2} \wr \mathbb{Z}$. 
The elements of the group are pairs of the form 
$\left( \eta, y \right)$, 
where $\eta : \mathbb{Z} \to \mathbb{Z}_{2}$ 
and $y \in \mathbb{Z}$. The group operation is 
\[
 \left( \eta_{1}, y_{1} \right) \left( \eta_{2}, y_{2} \right) := \left( \eta, y_{1} + y_{2} \right), 
\]
where 
$\eta \left( x \right) = \eta_{1} \left( x \right) + \eta_{2} \left( x - y_{1} \right) \mod 2$. 
\end{definition}
The reason for the name is that we may think of a lamp being present at each vertex of $\mathbb{Z}$, 
with a lamplighter walking on $\mathbb{Z}$ and turning lights on and off. 
A group element $\left( \eta, y \right)$ corresponds to the on/off configuration of the lamps $\eta$ and the position of the lamplighter $y$. 
Multiplying with the group elements $\left( \mathbf{0}, 1 \right)$ and $\left( \mathbf{0}, -1 \right)$ corresponds to the lamplighter moving to the right or to the left, 
and multiplying with $\left( \mathbf{1}_{0}, 0 \right)$ corresponds to flipping the light at the position of the lamplighter. 
Consider the random walk on the lamplighter group associated with the measure $\nu \ast \mu \ast \nu$, 
where $\mu$ is a simple random walk step by the lamplighter, 
and $\nu$ is a measure causing the lamplighter to randomize the current lamp. 
That is, 
$\mu \left( \mathbf{0}, \pm 1 \right) = 1/2$ 
and 
$\nu \left( \mathbf{1}_{0}, 0 \right) = \nu \left( \mathbf{0}, 0 \right) = 1/2$. 
In words, each step of the random walk corresponds to a ``randomize-move-randomize'' triple. 
We call the graph corresponding to this random walk the lamplighter graph and denote it by $\mathcal{G}$. 
The transition probabilities for this random walk have been well studied, which allow us to conclude the following result.

\begin{theorem}\label{thm:lamplighter}
Let $\rooto$ denote the identity element of the lamplighter group $\mathbb{Z}_{2} \wr \mathbb{Z}$, 
start with initial unit mass $\delta_{\rooto}$ at $\rooto$, and let $p \in \left( 0, 1 \right)$ be constant. 
The minimum number of toppling moves needed to transport mass $p$ to distance at least $n$ from $\rooto$ is 
\[
N_{p} \left( \mathcal{G}, B_{n}, \delta_{\rooto} \right) = \exp \left( \Theta \left( n \right) \right).
\]
\end{theorem}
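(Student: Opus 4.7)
The plan is to derive Theorem~\ref{thm:lamplighter} from Theorem~\ref{thm:bdd_deg_Green_restatement} by verifying its two hypotheses for the lamplighter graph $\mathcal{G}$: bounded degree and exponential decay of the Green's function. Bounded degree is immediate: a single step of the walk, defined as the $\nu \ast \mu \ast \nu$ ``randomize-move-randomize'' triple, produces at most $2 \cdot 2 \cdot 2 = 8$ distinct one-step neighbors, so every vertex of $\mathcal{G}$ has degree at most $8$.

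The substantive task is therefore to establish~\eqref{eq:exp_decay_Green}. The approach is to exploit the specific structure of the lamplighter. For $w = (\eta, y) \in \mathbb{Z}_2 \wr \mathbb{Z}$ at word-distance $n$ from the identity, the word metric satisfies $d(\rooto, w) \asymp |\mathrm{supp}(\eta)| + \mathrm{diam}(\mathrm{supp}(\eta) \cup \{0, y\})$, so any trajectory of the walk that reaches $w$ must have its lamplighter component visit a number $R$ of distinct sites in $\mathbb{Z}$ with $R = \Omega(n)$. The crucial probabilistic input is that, conditioned on the trajectory of the lamplighter's position, each visited lamp is independently randomized by the $\nu$ factors of the step distribution; hence the conditional probability that the final lamp configuration equals $\eta$ is at most $2^{-R}$, yielding
\[
 p^k(\rooto, w) \leq 2^{-R} \cdot \p\left( X_k = y, \ \mathrm{range}(X_{[0,k]}) \subseteq \text{an interval of length } R \right),
\]
where $\{X_k\}_{k \geq 0}$ denotes the position of the lamplighter.

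To pass from this estimate to exponential decay of $g(\rooto, w)$, I would decompose according to the range $R$ of the lamplighter's walk. The lamplighter's position is a lazy simple random walk on $\mathbb{Z}$, and standard Dirichlet eigenvalue estimates for the Laplacian on a finite interval give that the probability of remaining in an interval of length $R$ for $k$ steps is at most $C \exp(-c k / R^2)$. Summing this bound in $k$ contributes a factor of order $R^2$, so the contribution of each range $R$ to $g(\rooto, w)$ is at most $C R^2 \cdot 2^{-R}$. Since $R \geq c' n$ for any path reaching $w$, summing over $R \geq c' n$ produces $g(\rooto, w) \leq \exp(-a n + a')$ for suitable constants $a, a' > 0$, which is~\eqref{eq:exp_decay_Green}. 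Theorem~\ref{thm:bdd_deg_Green_restatement} then yields $N_p(\mathcal{G}, B_n, \delta_{\rooto}) = \exp(\Theta(n))$.

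The main obstacle I anticipate is making the heuristic lower bound $R \geq \Omega(d(\rooto, w))$ fully precise and uniform over all $w$ at distance $n$: the word metric on $\mathbb{Z}_2 \wr \mathbb{Z}$ is essentially a travelling-salesman quantity, and some care is needed to show that any trajectory ending at $w$ must visit the full support of $\eta$ together with a connecting interval to $y$, and therefore traces out $\Omega(d(\rooto, w))$ distinct sites. An alternative, cleaner route is to cite existing heat kernel or Green's function estimates for random walks on lamplighter groups (for example, from Pittet--Saloff-Coste or Erschler) and extract~\eqref{eq:exp_decay_Green} as a corollary.
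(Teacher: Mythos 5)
Your high-level strategy coincides with the paper's: reduce to Theorem~\ref{thm:bdd_deg_Green_restatement} and verify its two hypotheses (bounded degree, exponential decay of the Green's function) for the switch--walk--switch lamplighter graph $\mathcal{G}$. The $8$-regularity argument is the same. Where you differ is in establishing~\eqref{eq:exp_decay_Green}: the paper simply cites the explicit transition-probability asymptotics of Revelle (its reference \cite{revelle}, Theorems~1 and~2), whereas you sketch a self-contained probabilistic derivation via (i) the observation that, conditioned on the lamplighter's trajectory, the lamps on the range are i.i.d.\ uniform (a genuine feature of the $\nu\ast\mu\ast\nu$ chain: each visited lamp is the XOR of at least one fresh fair coin, and distinct sites use disjoint coins), (ii) the lower bound $R = \Omega(d(\rooto,w))$ on the range of any trajectory reaching $w=(\eta,y)$, which holds because the range is an interval containing $\{0,y\}\cup\mathrm{supp}(\eta)$ and $d(\rooto,w) = \Theta\bigl(\mathrm{diam}(\{0,y\}\cup\mathrm{supp}(\eta))\bigr)$ for the switch--walk--switch generating set, and (iii) the Dirichlet spectral-gap bound $\p(\text{confined to an interval of length }R\text{ for }k\text{ steps}) \le C e^{-ck/R^2}$, which after summing in $k$ and in $R \ge c'n$ gives $g(\rooto,w) \le C n^2 2^{-c'n} \le e^{-an+a'}$. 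This sketch is sound, and the gap you flag yourself (making the $R=\Omega(n)$ bound precise and uniform over $w$ on the sphere of radius $n$) is exactly the point that needs the interval characterization of the range together with the word-metric estimate; once stated carefully it closes without difficulty. The trade-off is the usual one: your route is longer but self-contained and illuminates \emph{why} the Green's function decays exponentially on $\mathcal{G}$ even though speed and entropy vanish (so the machinery of Section~\ref{sec:proofs_pos_speed_general} is unavailable); the paper's route is a one-line citation that produces sharper constants. Your closing remark that one could instead cite Pittet--Saloff-Coste or Erschler is in the same spirit as the paper's actual proof, which cites Revelle.
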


\begin{proof}
In order to apply Theorem~\ref{thm:bdd_deg_Green_restatement} we need to check that the two conditions of the theorem hold. 
First, $\mathcal{G}$ is $8$-regular, so the first condition holds. 
The fact that the Green's function decays exponentially follows directly from~\cite[Theorems~1 and~2]{revelle}. 
\end{proof}

\section{Open problems} \label{sec:open} 

\begin{itemize}
 \item \textbf{Connections to maximum overhang problems.} Paterson~et~al.~\cite{overhang} studied the controlled diffusion problem on $\mathbb{Z}$ due to its connections with the maximum overhang problem in one dimension: 
 how far can a stack of $n$ identical blocks be made to hang over the edge of a table? 
 
 The answer was widely believed to be of order $\log(n)$, 
 by considering harmonic stacks in which $n$ unit length blocks are placed one on top of the other, 
 with the $i^{\text{th}}$ block from the top extending by $1/(2i)$ beyond the block below it. 
 This construction has an overhang of $\sum_{i=1}^{n} 1/(2i) \sim \tfrac{1}{2} \ln \left( n \right)$. 
 
 However, Paterson and Zwick showed that this belief is false, by constructing an example with overhang on the order of $n^{1/3}$~\cite{paterson2006overhang}. 
 Subsequently, Paterson~et~al.\ showed that this is best possible up to a constant factor~\cite{overhang}. 
 The authors proved this result by connecting the overhang problem to the controlled diffusion problem on $\mathbb{Z}$. 
 
 This connection naturally leads to the following question: are the results presented in this paper relevant for maximum overhang problems in higher dimensions?
 
 \item \textbf{Effectiveness of the greedy algorithm.} Under what circumstances is the greedy algorithm (approximately) optimal? 
 
 \item \textbf{Small mass asymptotics.} What is the dependence of $N_{p} \left( G, B_{n}, \rootv \right)$ on $p$ as $p \to 0$? 
\end{itemize}


\section*{Acknowledgements}

We thank Peter Winkler for helpful discussions and Jordan Greenblatt for comments on an early draft. LF acknowledges the support of a MacCracken Fellowship from NYU and the Theory group at Microsoft Research Redmond for its hospitality.


\bibliographystyle{plain}
\bibliography{overhanghd}




\end{document}